\documentclass[11pt,reqno]{amsart}
\usepackage{amsmath, amssymb, graphicx, mathrsfs, hyperref}
\usepackage{amsthm}
\usepackage{palatino}
\usepackage{autonum}
\usepackage[makeroom]{cancel}
\usepackage{enumerate} 
\usepackage[usenames,dvipsnames,svgnames]{xcolor}
\usepackage[top=1.2in, bottom=1in, left=1in, right=1in]{geometry} 
\allowdisplaybreaks

\renewcommand{\(}{\left\(}
\renewcommand{\)}{\right\)}
\renewcommand{\[}{\left\[}

\renewcommand{\]}{\right\]}

\numberwithin{equation}{section}
\theoremstyle{plain}
\newtheorem{theorem}{Theorem}[section]
\newtheorem{lemma}[theorem]{Lemma}
\newtheorem{remark}[]{Remark}
\newtheorem*{remark*}{Remark}

\newtheorem{corollary}[theorem]{Corollary}

\theoremstyle{definition}

\makeatletter
\def\proof{\@ifnextchar[{\@oproof}{\@nproof}}
\def\@oproof[#1][#2]{\trivlist\item[\hskip\labelsep\textit{#2 Proof of\
		#1.}~]\ignorespaces}
\def\@nproof{\trivlist\item[\hskip\labelsep\textit{Proof.}~]\ignorespaces}

\makeatother

\begin{document}
	\title[Higher Order Dualities over Global Function Fields and Weighted M\"{o}bius Sums over $\mathbb{F}_q{[T]}$]{Higher Order Dualities over Global Function Fields and Weighted M\"{o}bius Sums over $\mathbb{F}_q{[T]}$} 
	
	\author{Prassanna Nand Jha and Jagannath Sahoo}
	\subjclass[2020]{ Primary   11R58, 11N45; Secondary 11T55, 11T06.}
	\thanks{\textit{Keywords and phrases.} Alladi's duality identity, M\"{o}bius function, Prime number theorem, Function fields}

	\address{Department of Mathematics, Indian Institute of Technology Gandhinagar, Palaj, Gandhinagar 382355, Gujarat, India} 
	\email{prassanna.jha@iitgn.ac.in}
	\address{Department of Mathematics, Indraprastha Institute of Information Technology Delhi, New Delhi, 110020, India}
	\email{jagannaths@iiitd.ac.in, jagannath.sahoo@alumni.iitgn.ac.in}

	\begin{abstract}
		 Alladi's duality identities (1977) provide a fundamental relation between the smallest and the $k$-th largest prime factors of integers. In this paper, we establish Alladi-type duality identities over global function fields, extending a result of Duan, Wang, and Yi. We also prove the asymptotic vanishing of a function field analogue of the weighted M\"{o}bius sum $\sum \mu(n)\omega(n)/n$, where $\omega(n)$ denotes the number of distinct prime divisors of $n$. By estimating $\Psi_2(n,m)$, the number of monics of degree $n$ whose second largest prime divisor degree is at most $m$, we further show that this vanishing persists when the sum is restricted to monics with a unique prime divisor of smallest degree belonging to a set of primes with natural density. As a corollary, we show that the unrestricted sum decomposes into infinitely many sub-series, each vanishing asymptotically.
		
	\end{abstract}
	
	\maketitle

	\section{Introduction}\label{sec:intro}
	
	Sums involving the M\"{o}bius function play a fundamental role in analytic number theory and their study is a central theme of the subject. Its close connection with the distribution of primes is revealed by Landau's theorem, which states that the prime number theorem is equivalent to the identity
	\begin{align}\label{PNTeq}
		\sum_{n\ge2}\frac{\mu(n)}{n}=-1,
	\end{align}
	where $\mu$ denotes the M\"obius function. In 1977, Alladi \cite{alladi} obtained a refinement of this identity by imposing conditions on the smallest prime factor. More precisely, if $p_1(n)$ denotes the smallest prime factor of $n$, then for positive integers $a$ and $m$ with $(a,m)=1$, he proved that
	\begin{align}\label{alladi-main-eq}
		\sum_{\substack{n\geq 2\\ p_1(n)\equiv a \bmod m}}
		\frac{\mu(n)}{n}
		=
		-\frac{1}{\varphi(m)}.
	\end{align}
	Thus, the restricted M\"{o}bius sum detects the density of the reduced residue class containing the smallest prime factor.
	In the same spirit, Alladi and Johnson \cite{alladi-johnson} later studied these M\"{o}bius sums weighted by $\omega(n)$, the number of distinct prime divisors of $n$. They proved that
	\begin{align}\label{alladi-johnson-eqs}
		\sum_{\substack{n\ge2}}
		\frac{\mu(n)\omega(n)}{n}=0, \qquad \text{and} \quad \sum_{\substack{n\ge2\\ p_1(n)\equiv a \bmod m }}
		\frac{\mu(n)\omega(n)}{n}=0.
	\end{align}
	These M\"{o}bius sums have since been generalized in various directions. We give an overview of the surrounding literature in Section \ref{sec:motivation}.

	The fundamental ingredient underlying these results are duality identities discovered by Alladi \cite{alladi}, which allow information about the $k$-th largest prime factor to be recovered from weighted M\"{o}bius divisor sums involving the smallest prime factors. More precisely, if $P_k(n)$ and $p_k(n)$ denote the $k$-th largest and $k$-th smallest prime factors of $n$, respectively, then Alladi proved that for every arithmetic function $f$ satisfying $f(1)=0$,
	\begin{align}\label{alladi-duality}
		\sum_{d\mid n}
		\mu(d)
		\binom{\omega(d)-1}{k-1}
		f(p_1(d))
		=
		(-1)^k
		f(P_k(n)).
	\end{align}
	
	Function fields provide a particularly attractive setting for investigating these identities. Besides their close analogy with the integers, the arithmetic of $\mathbb{F}_q[T]$ often admits sharper counting results for prime polynomials, allowing combinatorial arguments to be carried out more explicitly while retaining many of the essential features of the classical theory. In recent work, Duan, Wang and Yi \cite{duan-wang-yi} established a first order Alladi duality over global function fields and applied it to obtain a function field analogue of \eqref{alladi-main-eq}. Their work naturally raises the question of whether higher-order duality identities also exist over global function fields and whether they lead to weighted M\"{o}bius sum identities analogous to \eqref{alladi-johnson-eqs}.
	
	The principal objective of this paper is to answer these questions. We establish higher-order Alladi duality identities over rational and global function fields, in Theorems \ref{thm:duality in ff} and \ref{thm:global_duality} respectively, extending the first-order result of \cite{duan-wang-yi}. The proofs rely on new combinatorial identities for weighted M\"{o}bius sums over divisors and yield explicit higher-order analogues of the classical duality formula.
	
	Our second objective is to demonstrate the effectiveness of these higher-order identities. To this end, we investigate weighted M\"obius sums over $\mathbb{F}_q[T]$ analogous to those studied by Alladi and Johnson in \cite{alladi-johnson}. A key ingredient is a new upper bound given in Theorem \ref{thm:Psi2} for the second-order smooth polynomial counting function
	\begin{align}
		\Psi_2(n,m)
		=
		\#\left\{
		A\in\mathbb{F}_q[T]:
		A\ \text{monic},
		\deg A=n,
		\Delta_2(A)\le m
		\right\},
	\end{align}
	where $\Delta_2(A)$ denotes the second largest element of the set $\mathcal{N}(A)=\{\deg P : P \text{ prime}, P\mid A\}$. This estimate appears to be of independent interest and forms the principal analytic input in our arguments.
	In Theorem \ref{thm:landau_two_FF}, we establish the asymptotic vanishing of M\"obius sums weighted by $\Omega(A)=\#\mathcal{N}(A)$, analogous to the first identity in \eqref{alladi-johnson-eqs}. Then, combining it with the second-order duality, we show in Theorem \ref{thm:mainthm AJ} that the asymptotic vanishing in Theorem \ref{thm:landau_two_FF} persists even when the sum is restricted over monics with a unique prime divisor of smallest degree belonging to a set of primes with natural density. As a consequence, we show in Corollary \ref{cor:to main thm in ff} that the unrestricted weighted M\"obius sum decomposes into infinitely many asymptotically vanishing subseries.
	
	The paper is organized as follows. In Section \ref{sec:results}, we state our main results over the rational function field $\mathbb{F}_q[T]$, while Section \ref{sec:global} extends the higher-order duality identities to arbitrary global function fields. Section \ref{sec:motivation} discusses related work and places the present results in the context of the existing literature. Section \ref{prelems} collects the necessary preliminary results and the remaining sections contain the proofs of the higher-order duality identities, the estimate for $\Psi_2(n,m)$, and the applications to weighted M\"obius sums.

	\section{Statement of results}\label{sec:results}
	Let $q$ be a prime power and $\mathbb{F}_q$ be the finite field with $q$ elements. Let $\mathbb{F}_q[T]$ denote the ring of polynomials over $\mathbb{F}_q$ and $\mathbb{F}_q(T)$ denote its quotient field. The field $\mathbb{F}_q(T)$ is called a rational function field and any finite extension of $\mathbb{F}_q(T)$ is called a global function field.
	The rational function field $\mathbb{F}_q(T)$ possesses many properties similar to the field $\mathbb{Q}$. As a result, many classical number theoretic results on $\mathbb{Z}$ have analogous translations over the ring $\mathbb{F}_q[T]$.
	
	We say that a monic polynomial over $\mathbb{F}_q$ is prime if it is irreducible in the ring $\mathbb{F}_q[T]$. Let $\mathcal{P}$ denote the set of all primes in $\mathbb{F}_q[T]$ and let $\mathcal{S}$ be an arbitrary subset of $\mathcal{P}$. For integers $n\geq 0$, define $\pi_\mathcal{S}(n)$ to be the cardinality of the set $\{P\in \mathcal{S}:\deg P=n\}$, where $\deg P$ denotes the degree of the polynomial $P$. The natural density $\delta(\mathcal{S})$ of $\mathcal{S}$ is defined as
	\begin{equation}
		\delta(\mathcal{S}):=\lim_{n\to\infty}\frac{\pi_{\mathcal{S}}(n)}{\pi_\mathcal{P}(n)},
	\end{equation}
	provided the limit exists. For a monic polynomial $A\in \mathbb{F}_q[T]$, we define the M\"{o}bius function as
	\begin{equation}
		\mu(A):=
		\begin{cases}
			1 & \text{if } A=1, \\
			(-1)^k & \text{if } A \text{ is a product of } k \text{ distinct primes}, \\
			0 & \text{otherwise}.
		\end{cases}
	\end{equation}
	Let $\mathcal{N}(A)=\{\deg P:\text{ prime }P\mid A\}$ and $\Omega(A)=\#\mathcal{N}(A)$, the cardinality of $\mathcal{N}(A)$. Let $\Delta_k(A)$ and $\delta_k(A)$ denote the $k$-th largest and the $k$-th smallest elements of $\mathcal{N}(A)$ respectively. Define $\Delta_k(A)=\delta_k(A)=0$ for $k>\Omega(A)$. Also, define
	\begin{align}
		&P_{\min}(A):=\{\text{prime }P\mid A:\deg P=\delta_1(A)\}, \\
		&\mathcal{D}(\mathcal{S}):=\{\text{monic }A\in\mathbb{F}_q[T]:P_{\min}(A)\text{ has only one element }P\text{ and }P\in\mathcal{S}\}, \text{ and}\\
		&Q_\mathcal{S}^{(k)}(A):=\#\{P\in\mathcal{S}:P\mid A,\deg P=\Delta_k(A)\}.\label{important_definitions}
	\end{align}
	Throughout this paper, let $\sideset{}{'}\sum$ denote a sum over monics. Duan, Wang and Yi \cite[Lemma 3.1]{duan-wang-yi} showed that for any monic $A\in\mathbb{F}_q[T]$ and $f:\mathbb{N}\to\mathbb{C}$ with $f(0)=0$, we have
	\begin{equation}\label{duan-duality}
		\sideset{}{'}\sum_{B\in\mathcal{D}(\mathcal{S}),\hspace{1mm}B\mid A}\mu(B)f(\delta_1(B))=-Q_\mathcal{S}^{(1)}(A)f(\Delta_1(A)).
	\end{equation}
	Our first main result generalizes this duality to higher orders.
	\begin{theorem}\label{thm:duality in ff}
		Let $A\in\mathbb{F}_q[T]$ be monic. If $f:\mathbb{N}\to\mathbb{C}$ is an arithmetic function with $f(0)=0$, then for any positive integer $k$,
		\begin{equation}\label{gen duality in ff}
			\sideset{}{'}\sum_{B\in\mathcal{D}(\mathcal{S}),\hspace{1mm}B\mid A}\mu(B){\Omega(B)-1\choose k-1}f(\delta_1(B))=(-1)^kQ_\mathcal{S}^{(k)}(A)f(\Delta_k(A)),
		\end{equation}
		where
		$\Omega(A)=\#\{\deg P:\text{ prime }P\mid A\}$.
	\end{theorem}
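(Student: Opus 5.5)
Since $\mu(B)$ vanishes unless $B$ is squarefree, we may assume the sum runs over squarefree $B$, and we work directly with the prime factorization of $A$. Let $d_1<d_2<\dots<d_r$ be the elements of $\mathcal{N}(A)$, so $r=\Omega(A)$, $\Delta_k(A)=d_{r-k+1}$ for $k\le r$, and let $\mathcal{P}_j$ be the set of primes dividing $A$ of degree $d_j$, with $n_j=\#\mathcal{P}_j$. A squarefree $B\mid A$ lies in $\mathcal{D}(\mathcal{S})$ with $\delta_1(B)=d_j$ exactly when $B=P\cdot C$, where $P\in\mathcal{P}_j\cap\mathcal{S}$ and $C$ is a squarefree product of primes dividing $A$ of degree $>d_j$. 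We therefore group the left side of \eqref{gen duality in ff} by $j$ and by $P$. Each $j$ contributes $s_j:=\#(\mathcal{P}_j\cap\mathcal{S})$ identical copies, and $\Omega(B)=1+\Omega(C)$, $\mu(B)=-\mu(C)$. The left side becomes
\begin{equation*}
-\sum_{j=1}^r s_j f(d_j)\sum_{C}\mu(C)\binom{\Omega(C)}{k-1},
\end{equation*}
where $C$ ranges over squarefree divisors of $A_j:=\prod_{i>j}\prod_{Q\in\mathcal{P}_i}Q$. The case $f(0)=0$ handles $k>r$, where $\Delta_k(A)=0$.

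The key step is to evaluate the inner sum $S_j:=\sum_{C\mid A_j}\mu(C)\binom{\Omega(C)}{k-1}$. A squarefree $C\mid A_j$ is determined by choosing subsets $T_i\subseteq\mathcal{P}_i$ for $i>j$. Then $\mu(C)=\prod_i(-1)^{\#T_i}$, and $\Omega(C)$ is the number of $i$ with $T_i\neq\emptyset$. Summing over subsets with a fixed support $I\subseteq\{j+1,\dots,r\}$ gives the factor $\prod_{i\in I}\sum_{\emptyset\ne T\subseteq\mathcal{P}_i}(-1)^{\#T}=(-1)^{|I|}$. Hence
\begin{equation*}
S_j=\sum_{m\ge 0}\binom{r-j}{m}(-1)^m\binom{m}{k-1}.
\end{equation*}
This is the same alternating sum as in Alladi's integer case, and the function field multiplicities $n_i$ disappear at this point. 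The standard identity $\sum_m(-1)^m\binom{N}{m}\binom{m}{k-1}=(-1)^{k-1}\delta_{N,k-1}$ then gives $S_j=(-1)^{k-1}$ if $r-j=k-1$ and $S_j=0$ otherwise. To prove the identity, take the coefficient of $x^{k-1}$ in $\sum_m\binom{N}{m}(-1)^m(1+x)^m=(-x)^N$.

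Only $j=r-k+1$ survives, and this requires $k\le r$. That index gives left side $=-s_{r-k+1}f(d_{r-k+1})(-1)^{k-1}=(-1)^k Q_{\mathcal{S}}^{(k)}(A)f(\Delta_k(A))$, since $s_{r-k+1}$ is exactly $Q^{(k)}_{\mathcal{S}}(A)$. If $k>r$, every $S_j$ vanishes, because $r-j<k-1$. The right side is then $0$ by $f(0)=0$ and $\Delta_k(A)=0$. The trivial case $A=1$ is covered by this, since the sum is empty.

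The main obstacle is the collapse of the inner sum over degree classes, i.e. checking that the nonempty-subset sums give exactly $-1$ per degree class, so that several primes of the same degree do not distort the binomial weights. The rest is the classical binomial identity. The case $k=1$ of this computation reproduces \eqref{duan-duality}, which serves as a consistency check.
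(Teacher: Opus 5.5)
Your proof is correct, but it takes a different route from the paper's.

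\textbf{The paper's route.} The paper argues by induction on $k$, taking the case $k=1$ from Duan, Wang and Yi as the base. In the inductive step it uses Pascal's rule to write the order-$(r+1)$ weight $\binom{\Omega(B)-1}{r}$ as $\binom{\Omega(B)}{r}$ minus the order-$r$ weight. It then peels off the unique smallest-degree prime $P\in\mathcal{S}$ just as you do. The resulting inner sum $\sum_{B'}\mu(B')\binom{\Omega(B')+1}{r}$ is evaluated by Corollary~\ref{lem:sum muomega gen in ff}. That sum is nonzero for two consecutive degree classes $t$. This produces two boundary terms, one of which cancels the term supplied by the induction hypothesis.

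\textbf{Your route.} You peel off $P$ with the original weight, so $\binom{\Omega(B)-1}{k-1}=\binom{\Omega(C)}{k-1}$, and the inner sum is nonzero for a single value only. In the paper's language this is Lemma~\ref{lem: weighted sum mu in ff} applied to $f(x)=\binom{x}{k-1}$, for which $D_N f(0)=\delta_{N,k-1}$. Your generating-function proof of the binomial identity is an equivalent way to see this. Your check that each nonempty degree class contributes exactly $-1$ is the same computation as in the proof of that Lemma.

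\textbf{Comparison.} Your approach gives a one-step, self-contained proof. It needs no induction and no appeal to the first-order duality, which you recover as the case $k=1$. It also treats $k>\Omega(A)$ uniformly. The paper's approach reuses the published first-order identity as a black box and mirrors the inductive structure of the classical argument. However, as your computation shows, the Pascal step and the two-valued Corollary are artifacts of the induction rather than necessities.
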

	\begin{remark}
		The duality result \eqref{duan-duality} by Duan, Wang and Yi \cite{duan-wang-yi} is stated more generally for primes in global function fields in their paper. Our result can also be formulated in the context of global function fields. Since this statement involves setting up some notation, we state it in Section \ref{sec:global}.
	\end{remark}
	
	Next, to aid in proving the subsequent theorems, we obtain the following bound for 
	\begin{align}
		\Psi_2(n,m):=\#\{A \in \mathbb{F}_q[T] : A \text{ is monic}, \deg A = n, \Delta_2(A)\leq m\}.
	\end{align}
	\begin{theorem}\label{thm:Psi2}
		Let $n$ be a positive integer and $m>0$ be such that $m=o(n)$. Then, we have
		$$\Psi_2(n,m)\ll_\epsilon \frac{q^n}{n^\epsilon}+ q^n \frac{m}{n},$$
		for every $0<\epsilon<1$.
	\end{theorem}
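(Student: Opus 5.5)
Every monic $A$ of degree $n$ with $\Delta_2(A)\le m$ has the form $A = B\,C$. Here $C$ is the product of all prime powers $P^e\,\|\,A$ with $\deg P=\Delta_1(A)$. The factor $B$ is $m$-smooth, meaning every prime divisor of $B$ has degree at most $m$. Note that $\mathcal{N}$ records degrees, so $C$ may involve several primes of the same top degree $d=\Delta_1(A)$, and $C$ may be a proper power. The plan is to split according to $j=\deg B$. I fix a threshold $J = n - \lfloor n^{\epsilon'} \rfloor$-type cutoff, or more simply distinguish between $\deg B$ being small and $\deg B$ being large. I also use the trivial bound $\Psi(j,m)\le q^j$ for the number of $m$-smooth monics of degree $j$.

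\textbf{Step 1 (structure of $C$).} Write $\deg C = n-j = r d$, where $d>m$ or $C=1$. The number of monics of degree $rd$ all of whose prime factors have degree exactly $d$ is the coefficient of $u^{rd}$ in $(1-u^d)^{-\pi(d)}$. Since $\pi(d)\le q^d/d$, this is $\binom{\pi(d)+r-1}{r}\le (\pi(d)+r)^r/r! \ll q^{rd}/(d^r r!)\cdot e^{O(r^2 d/q^d)}$. For $r\ge1$ the count is therefore $\ll q^{n-j}/d$, uniformly. The case $r=1$ dominates.

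\textbf{Step 2 (summing).} This gives
\[
\Psi_2(n,m)\le \Psi(n,m)+\sum_{j=0}^{n-m-1}\Psi(j,m)\sum_{\substack{d\mid n-j\\ d>m}}\frac{q^{n-j}}{d}.
\]
In the range $n-j\ge n/2$, I bound $\Psi(j,m)\le q^j$ and $\sum_{d\mid N,d>m}1/d\le \tau(N)/m$. This crude bound is not good enough, so instead I single out $r=1$, $d=n-j$. That term contributes $\sum_j q^j\cdot q^{n-j}/(n-j)$ over $j\le n/2$, which is $\ll q^n\cdot \tfrac{1}{n}\cdot n$, still too large. The trivial bound on $\Psi(j,m)$ must therefore be replaced by a genuine smooth-number bound. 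For $j$ large compared to $m$ I use the standard Rankin-type estimate $\Psi(j,m)\ll q^j e^{-(j/m)\log(j/m)/2}$, or simply $\Psi(j,m)\ll q^j\,(m/j)$, obtained by counting $B$ through its largest prime factor (the Step 1 count again, iterated once). Inserting this gives $\sum_{j} q^j\min(1,m/j)\cdot q^{n-j}/(n-j)$. Splitting at $j=n/2$, the part $j\le n/2$ gives $\ll \frac{q^n}{n}\sum_{j\le n/2}\min(1,m/j)\ll q^n\frac{m\log n}{n}$. The part $j>n/2$ gives $\ll q^n\frac{m}{n}\sum_{k\ge m}\frac1k$.

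\textbf{Step 3 (removing logarithms; main obstacle).} The crude $\min(1,m/j)$ bound loses a $\log$, which is where the $n^{-\epsilon}$ term must absorb things. I will instead use a sharper smooth bound, $\Psi(j,m)\ll_\epsilon q^j (m/j)^{1-\epsilon}$ or the Dickman-type decay $\rho(j/m)$. Two regions need care:
\begin{itemize}
\item For $j\le n-n^{\epsilon}$-ish, summing $\frac{q^n}{n-j}\rho(j/m)$ yields $\ll q^n m/n$, because $\sum_j\rho(j/m)\ll m$.
\item For $n-j$ small, the factor $C$ has small degree. These terms are controlled by $\Psi(n,m)$-like bounds on $B$, which has degree close to $n$. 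They give $\ll q^n\sum_{k\le n^{1-\epsilon}}\rho((n-k)/m)/k$, which is $\ll q^n/n^\epsilon$ since $m=o(n)$ makes $\rho$ tiny.
\end{itemize}
The term $\Psi(n,m)$ itself is also $\ll q^n\rho(n/m)$, which is negligible. The main difficulty is getting uniform smooth-polynomial bounds valid for all $j$ and $m$ (including $j\approx m$), and making the $\log$ losses in the harmonic sums fit into the stated $n^{-\epsilon}+m/n$ shape. I would first try the elementary bound $\Psi(j,m)\le q^j\sum_{k\le m}(\text{prob. largest factor degree }k)$ via the identity $\sum_{\deg P\le m}$ combined with Rankin's trick with parameter $\sigma=1-\epsilon$.
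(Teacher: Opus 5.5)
Your decomposition $A=BC$ is sound, but there is a genuine gap: the argument stalls exactly where the content of the theorem lies. Step 3 names the logarithmic loss without removing it, and none of the tools listed there removes it. Write $\Psi_1(j,m)$ for your count of $m$-smooth monics of degree $j$.

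The range $j\le n/2$ is in fact harmless. There $1/(n-j)\le 2/n$, and Mertens' estimate gives $\sum_{\Delta_1(B)\le m}q^{-\deg B}\ll m$, so this range contributes $\ll q^n m/n$. The $\log n$ you found there comes only from bounding $\Psi_1(j,m)/q^j$ pointwise by $\min(1,m/j)$.

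The genuine difficulty is $j>n/2$. There the top prime has degree $k=n-j\in(m,n/2)$, and $\sum_{m<k<n/2}1/k$ produces a factor $\log(n/m)$. Your remedies do not handle it:
\begin{itemize}
\item $(m/j)^{1-\epsilon}$ is weaker than $m/j$, not stronger.
\item $q^j\rho(j/m)$ is not a uniform upper bound over $\mathbb{F}_q[T]$: for $m=1$, $\Psi_1(j,1)=\binom{q+j-1}{j}$ exceeds $q^j\rho(j)$ for large $j$. You also do not prove it in any range.
\item Rankin's trick with a fixed $\sigma=1-\epsilon$ costs a factor of size at least $\exp(c_\epsilon q^{\epsilon m}/m)$, which is useless unless $m=O_\epsilon(\log n)$.
\item Thorne's bound $q^j m e^{-j/m}$ is weaker than the trivial bound $q^j$ for all $j\le n$ once $n/m=o(\log n)$.
\item The Chebyshev bound $q^j m/j$ leaves $q^n\frac{m}{n}\log\frac{n}{m}$.
\end{itemize}
For instance, for $m=n/\log\log n$ the best of these overshoots by a factor $\log\log\log n$. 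Since $m/n\gg n^{-\epsilon}$ there, the excess cannot be absorbed into the $q^n/n^{\epsilon}$ term.

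Separately, after observing that the per-divisor bound $q^{n-j}/d$ from Step 1 is too weak, you keep only the terms in which $C$ is a single prime and silently drop the rest (top degree occurring more than once). These need their own estimate. Even granting the correct order $q^N/N^2$ for the number of such $C$ of degree $N$, the trivial bound $\Psi_1(j,m)\le q^j$ only yields $\ll q^n/m$, which is useless for bounded $m$.

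The missing idea, which is how the paper proceeds, is that for $\deg B>n/2$ no pointwise smooth-number bound is needed at all. The paper first removes two sets. One is $\mathcal{W}(n)$, the monics with a single prime degree, which is $\ll\tau(n)q^n/n$ by direct counting (Lemma \ref{lem:N}). The other is the set $M(n)$ in which the top degree occurs at least twice; it is handled by Thorne's bound at the scale $n^{\epsilon}$ plus $\sum_{d>n^{\epsilon}}\pi_{\mathcal{P}}(d)^2q^{n-2d}\ll q^n/n^{\epsilon}$, and this is the source of the $n^{-\epsilon}$ term.

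On the rest, the paper writes $A=BQ$ with $Q$ the unique top prime and refines by $d=\Delta_1(B)=\Delta_2(A)\le m$. For $\deg B>n/2$ it uses $\deg Q>d$, so $\pi_{\mathcal{P}}(\deg Q)<q^{\deg Q}/d$. It then inserts the weight $1<2\deg B/n$ and applies the weighted Mertens bound $\sum_{\Delta_1(B)=d}\deg B\,q^{-\deg B}\ll d$ of Lemma \ref{lem:useful}, obtained by logarithmic differentiation of the Euler product over $\deg P\le d$. Each $d$ then contributes $\ll q^n/n$, for a total of $\ll q^n m/n$.

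In your notation the same device closes the gap directly. For $j>n/2$ one has $1/(n-j)<1/m$ and $1<2j/n$, so this range contributes at most $\frac{2q^n}{mn}\sum_{\Delta_1(B)\le m}\deg B\,q^{-\deg B}\ll q^n\frac{m}{n}$.

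Alternatively, iterate the identity $j\Psi_1(j,m)=\sum_{\deg P\le m}\sum_{k\ge1}\deg P\,\Psi_1(j-k\deg P,m)$ once more. This gives $\Psi_1(j,m)\ll q^j(m/j)^2$ for $j\ge m$, which removes the $\log(n/m)$ and also controls the dropped terms once they are counted correctly.

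With either fix your route would go through, and for $m\ge1$ it would even give $\Psi_2(n,m)\ll q^n m/n$. As written, however, it does not prove the bound.
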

	
	We now shift our attention to weighted M\"{o}bius sums associated with the number of distinct prime degrees of a polynomial.
	
	\begin{theorem}\label{thm:landau_two_FF}
		We have that
		\begin{align}\label{landau_two_FF}
			\lim_{x\to\infty}\sideset{}{'}\sum_{1\leq\deg A\leq x}\frac{\mu(A)\Omega(A)}{q^{\deg A}}=0,
		\end{align}
		where
		$\Omega(A)=\#\{\deg P:\text{ prime }P\mid A\}$ and $\mu$ is the M\"{o}bius function.
	\end{theorem}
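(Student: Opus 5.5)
The idea is to turn the partial sums in \eqref{landau_two_FF} into a single coefficient of a generating function. That coefficient should equal, up to sign and the factor $q^{-x}$, the number of monics of degree $x$ whose prime factors all have the same degree. Theorem \ref{thm:Psi2} then bounds this count.

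\emph{Step 1: the generating function.} Write $\Omega(A)=\sum_{d\ge1}\mathbf{1}[\text{some prime of degree } d \text{ divides } A]$. For fixed $d$, the Euler product gives
\begin{align}
\sideset{}{'}\sum_{A}\mu(A)u^{\deg A}=1-qu=\prod_{e\ge1}(1-u^e)^{\pi_{\mathcal P}(e)} .
\end{align}
Removing the degree-$d$ factor therefore gives the generating function of $\mu(A)$ over monics with no prime factor of degree $d$, namely $(1-qu)(1-u^d)^{-\pi_{\mathcal P}(d)}$. Hence
\begin{align}
\sideset{}{'}\sum_{A}\mu(A)\Omega(A)u^{\deg A}=(1-qu)\sum_{d\ge1}\Bigl(1-(1-u^d)^{-\pi_{\mathcal P}(d)}\Bigr).
\end{align}
This is an identity of formal power series, since the $d$-th summand is $O(u^d)$. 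Put $u=z/q$. The partial sum $\sideset{}{'}\sum_{1\le \deg A\le x}\mu(A)\Omega(A)q^{-\deg A}$ is the coefficient of $z^x$ in $(1-z)^{-1}$ times the series above. The factor $1-z$ cancels, and the constant term vanishes, so this partial sum equals the coefficient of $z^x$ in
\begin{align}
H(z)=\sum_{d\ge1}\Bigl(1-(1-z^dq^{-d})^{-\pi_{\mathcal P}(d)}\Bigr).
\end{align}

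\emph{Step 2: interpreting the coefficient.} The coefficient of $w^j$ in $(1-w)^{-\pi_{\mathcal P}(d)}$ is $\binom{\pi_{\mathcal P}(d)+j-1}{j}$. This is exactly the number of monics of degree $dj$ all of whose prime factors have degree $d$. Summing over $d\mid x$ with $j=x/d$ gives
\begin{align}
\sideset{}{'}\sum_{1\le \deg A\le x}\frac{\mu(A)\Omega(A)}{q^{\deg A}}=-\frac{1}{q^{x}}\#\{A \text{ monic}:\deg A=x,\ \Omega(A)=1\}.
\end{align}
One could also check this identity combinatorially, but the generating-function derivation is cleaner.

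\emph{Step 3: bounding the count.} Every monic $A$ with $\Omega(A)=1$ has $\Delta_2(A)=0$, so $A$ is counted in $\Psi_2(x,m)$ for any $m>0$. Choose $m=\sqrt{x}$, so that $m=o(x)$, and fix $\epsilon=1/2$. Theorem \ref{thm:Psi2} then gives
\begin{align}
\#\{A \text{ monic}:\deg A=x,\ \Omega(A)=1\}\le\Psi_2(x,\sqrt x)\ll q^x x^{-1/2},
\end{align}
so the partial sum is $O(x^{-1/2})\to0$. The main point to verify carefully is the formal manipulation in Step 1, in particular the exchange of the sum over $d$ with the coefficient extraction.

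As an alternative to Theorem \ref{thm:Psi2}, the count can be bounded directly. The term $d=x$ contributes $\pi_{\mathcal P}(x)q^{-x}\sim 1/x$. For $d\le x/2$ one can estimate $q^{-x}\binom{\pi_{\mathcal P}(d)+j-1}{j}$ crudely using $\pi_{\mathcal P}(d)\le q^d/d$. The appeal to Theorem \ref{thm:Psi2} avoids this casework.
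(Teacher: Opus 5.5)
Your argument is correct and is essentially the paper's proof. The paper derives the same identity
$\sideset{}{'}\sum_{1\le\deg A\le x}\mu(A)\Omega(A)q^{-\deg A}=-\#\mathcal{W}(x)/q^{x}$
by writing $q^{x-\deg A}$ as the number of monic multiples of $A$ of degree $x$, swapping the sums, and applying Corollary \ref{lem:sum muomega in ff}; this is the divisor-sum counterpart of your Euler-product computation, and your factor $(1-z)^{-1}$ is exactly that swap.

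The only real difference is the final bound. The paper bounds $\#\mathcal{W}(x)$ directly by Lemma \ref{lem:N}, which gives the rate $O_\epsilon(x^{-1+\epsilon})$. Your appeal to Theorem \ref{thm:Psi2} with $m=\sqrt{x}$ is valid and not circular, but it only gives $O(x^{-1/2})$. It also does not actually avoid the casework: the proof of Theorem \ref{thm:Psi2} begins by invoking Lemma \ref{lem:N}, which is precisely the direct case analysis you sketch as the alternative.
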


	Let $\mathcal{S}$ be an arbitrary subset of primes in $\mathbb{F}_q[T]$ having some natural density. As an application of Theorem \ref{thm:duality in ff} for $k=2$, we obtain that \eqref{landau_two_FF} vanishes asymptotically even when the sum is restricted to the elements whose smallest-degreed prime factor is unique and lies in $\mathcal{S}$.
	\begin{theorem}\label{thm:mainthm AJ}
		Let $\mathcal{S}$ be any arbitrary subset of the set of primes in $\mathbb{F}_q[T]$ with natural density $\delta(\mathcal{S})$. Then
		\begin{equation}
			\lim_{x\to\infty}\sideset{}{'}\sum_{\substack{1\leq\deg A\leq x\\A\in\mathcal{D}(\mathcal{S})}}\frac{\mu(A)\Omega(A)}{q^{\deg A}}=0,
		\end{equation}
		where
		$\Omega(A)=\#\{\deg P:\text{ prime }P\mid A\}$ and $\mu$ is the M\"{o}bius function.
	\end{theorem}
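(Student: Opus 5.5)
We follow the classical Alladi route: invert the duality of Theorem \ref{thm:duality in ff} with $k=2$, then estimate the resulting sum over $A$ using the prime polynomial theorem and Theorem \ref{thm:Psi2}. Since $\binom{\Omega(B)-1}{1}=\Omega(B)-1$, we have $\mu(B)\Omega(B)=\mu(B)(\Omega(B)-1)+\mu(B)$ on $\mathcal{D}(\mathcal{S})$. The sum in question therefore splits into two pieces. The first is the $\mu(B)(\Omega(B)-1)$ piece. The second is $\sum'_{B\in\mathcal{D}(\mathcal{S}),\,1\le \deg B\le x}\mu(B)q^{-\deg B}$, which is precisely the restricted sum treated by Duan, Wang and Yi via \eqref{duan-duality}. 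Its limit is $-\delta(\mathcal{S})$; I take this as known from \cite{duan-wang-yi}, or else reprove it by the same method sketched below with $k=1$. So the task reduces to showing that
\[
\lim_{x\to\infty}\sideset{}{'}\sum_{\substack{1\le \deg B\le x\\ B\in\mathcal{D}(\mathcal{S})}}\frac{\mu(B)(\Omega(B)-1)}{q^{\deg B}}=\delta(\mathcal{S}).
\]

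To prove this, I sum the identity \eqref{gen duality in ff} with $k=2$ and $f=\mathbf 1_{\{j\ge1\}}$ (so $f(0)=0$) over all monic $A$ of degree $\le x$. Interchanging the order of summation, with $A=BC$, gives
\[
\sum_{\deg A\le x}Q^{(2)}_\mathcal{S}(A)\mathbf 1_{\Omega(A)\ge2}=\sum_{B\in\mathcal{D}(\mathcal{S}),\,\deg B\le x}\mu(B)(\Omega(B)-1)\,N(x-\deg B),
\]
where $N(y)=\#\{C \text{ monic},\ \deg C\le y\}=(q^{\lfloor y\rfloor+1}-1)/(q-1)$. Since $N(y)$ is essentially $\frac{q}{q-1}q^{y}$, it is cleaner to sum the duality only over $\deg A=x$ exactly. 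Then the right side becomes $q^x\sum_{\deg B\le x}\mu(B)(\Omega(B)-1)q^{-\deg B}$ exactly, because the number of monics of degree $x-\deg B$ is $q^{x-\deg B}$. This gives the identity
\[
\sideset{}{'}\sum_{\substack{\deg B\le x\\ B\in\mathcal{D}(\mathcal{S})}}\frac{\mu(B)(\Omega(B)-1)}{q^{\deg B}}=\frac{1}{q^x}\sideset{}{'}\sum_{\deg A=x}Q^{(2)}_\mathcal{S}(A),
\]
and the target becomes the mean value $q^{-x}\sum'_{\deg A=x}Q^{(2)}_\mathcal{S}(A)\to\delta(\mathcal{S})$.

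For the mean value, write $Q^{(2)}_\mathcal{S}(A)=\sum_{P\in\mathcal{S},\,P\mid A,\ \deg P=\Delta_2(A)}1$ and fix a parameter $m=m(x)\to\infty$ with $m=o(x)$. Monics with $\Delta_2(A)\le m$ are controlled by Theorem \ref{thm:Psi2}. Their contribution is at most $\Psi_2(x,m)$ times the maximal $Q$-value; since $Q^{(2)}\le x/\Delta_2$ this needs a little care, and I would bound it by splitting into $\Delta_2\le m$ versus larger values, counting pairs $(P,C)$ with $\deg P=j\le m$. Either way it is $o(q^x)$. For the main range, parametrize $A=P\cdot C$ with $\deg P=j>m$, $P\in\mathcal{S}$, where $j$ is the second-largest degree. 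Then $C$ must contain exactly one degree-class $\ell>j$ among its prime-degree set, and all other prime degrees of $C$ are $\le j$ (the degree $j$ may recur). Counting, the number of such $A$ is approximately $\pi_\mathcal{S}(j)\cdot\#\{C\}$. By the prime polynomial theorem $\pi_\mathcal{S}(j)\sim\delta(\mathcal{S})q^j/j$. Also, $\#\{C\}$ is $q^{x-j}$ times the density of monics whose largest prime degree $\ell$ exceeds $j$ and whose next prime degree is $\le j$, that is, essentially $q^{x-j}$ times $\Pr(\Delta_2(C')\le j)$ for $C'$ of degree $x-j-\ell$. Summing over $j$, the total is $\delta(\mathcal{S})$ times the same expression with $\mathcal{S}=\mathcal{P}$, up to an $o(q^x)$ error coming from the density approximation at $j>m\to\infty$.

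It therefore suffices to handle $\mathcal{S}=\mathcal{P}$, and here a cheap trick applies. For $\mathcal{S}=\mathcal{P}$ we have $\mathcal{D}(\mathcal{P})$, and the sum $\sum' Q^{(2)}_\mathcal{P}(A)$ counts, for each $A$ with $\Omega\ge2$, the number of primes at the second-largest degree. I would instead show directly that $q^{-x}\sum_{\deg A=x}Q_\mathcal{P}^{(2)}(A)\to1$. This should follow because typically $\Delta_2(A)$ is large and then exactly one prime of that degree divides $A$: two distinct primes of the same degree $j$ occur with relative frequency $\ll\sum_{j>m}(1/j)^2\to0$. Together with $\Psi_2(x,m)=o(q^x)$ and the fact that $\Omega(A)\ge2$ for almost all $A$, this gives the claim.

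The main obstacle is the second-to-last step: the uniform approximation of $\pi_\mathcal{S}(j)$ by $\delta(\mathcal{S})\pi_\mathcal{P}(j)$, and keeping the error from large $Q$-values summable. I would first try the direct bound $\sum_{\deg A=x}\sum_{P\mid A,\,\deg P=j}1\le\pi(j)q^{x-j}\le q^x/j$, which tames the tails. The genuinely nonelementary input is Theorem \ref{thm:Psi2}.
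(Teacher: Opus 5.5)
Your proposal is correct and follows the paper's skeleton for most of the argument. The paper also sums the $k=1$ and $k=2$ dualities over $\deg A=x$ to reduce the theorem to $q^{-x}\sum_{\deg A=x}Q^{(2)}_{\mathcal S}(A)\to\delta(\mathcal S)$. It quotes the mean of $Q^{(1)}_{\mathcal S}$ (Lemma \ref{lem:QS1}), which is equivalent to the restricted $\mu$-sum limit you quote. It bounds the range $\deg P\le m$ by $\sum_{d\le m}\pi_{\mathcal P}(d)\Psi_2(x-d,d)$ via Theorem \ref{thm:Psi2}, and it proves $\mathcal T=q^x+o(q^x)$ for $\mathcal S=\mathcal P$ exactly as you sketch: a lower bound from $\Psi_2(x,m)$ and an upper bound from pairs of equal-degree primes.

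The genuine difference is the passage from $\mathcal P$ to $\mathcal S$. The paper writes $A=Q_1\cdots Q_rPB$ and bounds $|\pi_{\mathcal S}(d)-\delta(\mathcal S)\pi_{\mathcal P}(d)|$ by the absolute discrepancy $e_{\mathcal S}(d)$, while bounding the smooth counts trivially. This gives an error $O(q^x\,x\,v_{\mathcal S}(m))$, so Lemma \ref{lem:vs}(b) is then needed to choose $m$ with $x\,v_{\mathcal S}(m)\to0$.

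You instead observe that the fibre count $c(x,j)=\#\{C:\deg C=x-j,\ \text{exactly one element of }\mathcal N(C)\text{ exceeds }j\}$ is the same for every prime of degree $j$. So the count is exactly, not approximately, $\pi_{\mathcal S}(j)c(x,j)$. Positivity then gives, with $\mathcal T',\mathcal T$ the contributions of $\deg P>m$,
\begin{equation}
|\mathcal T'-\delta(\mathcal S)\mathcal T|\le\sup_{j>m}\Bigl|\frac{\pi_{\mathcal S}(j)}{\pi_{\mathcal P}(j)}-\delta(\mathcal S)\Bigr|\cdot\mathcal T=o(q^x)
\end{equation}
for every $m\to\infty$, because $\mathcal T=O(q^x)$. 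This removes the $N_r(k)$ bookkeeping and Lemma \ref{lem:vs} entirely. The paper's route mirrors the Duan--Wang--Yi argument for $Q^{(1)}$, at the cost of a lossy error term that must be rescued by the choice of $m$.

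This also dissolves the ``main obstacle'' you name. Do not fall back on the crude bound $\pi_{\mathcal P}(j)q^{x-j}\le q^x/j$: summed over $m<j<x/2$ it loses a factor $\log(x/m)$ and would force a delicate choice of $m$. Use $\mathcal T=O(q^x)$ instead, which your $\mathcal P$ case already supplies; it also follows directly from $Q\le1+\binom{Q}{2}$ and the count of equal-degree prime pairs.

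Two small corrections:
\begin{enumerate}
\item In the range $\Delta_2\le m$, only the pair count works, giving $\ll q^x(\log m/x^{\epsilon}+m/x)$. The alternative of $\Psi_2(x,m)$ times the maximal value of $Q^{(2)}$ does not give $o(q^x)$, since $Q^{(2)}$ can be of size $x/\log x$ there.
\item Use the density hypothesis as $\pi_{\mathcal S}(j)=\delta(\mathcal S)\pi_{\mathcal P}(j)+o(\pi_{\mathcal P}(j))$ rather than $\pi_{\mathcal S}(j)\sim\delta(\mathcal S)q^j/j$, so that the case $\delta(\mathcal S)=0$ is covered.
\end{enumerate}
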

	
	\begin{remark}
		We expect a generalization of Theorem \ref{thm:mainthm AJ} to hold for global function fields as well. Proving this would require uniform estimates analogous to \eqref{eq : Psi1 in ff} for global function fields, which do not seem to be available in the existing literature. We relegate this study to forthcoming work.
	\end{remark}
	
	As a corollary of Theorem \ref{thm:mainthm AJ}, we obtain a refinement of Theorem \ref{thm:landau_two_FF}. We show that the sum in Theorem \ref{thm:landau_two_FF} admits a decomposition into an infinite family of subseries, all of which vanish asymptotically.
	\begin{corollary}\label{cor:to main thm in ff}
		For any positive integer $n$, we have
		\begin{equation}
			\lim_{x\to\infty}\sideset{}{'}\sum_{\substack{1\leq\deg A\leq x\\\delta_1(A)=n}}\frac{\mu(A)\Omega(A)}{q^{\deg A}}=0.
		\end{equation}
	\end{corollary}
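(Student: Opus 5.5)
We derive Corollary \ref{cor:to main thm in ff} from Theorem \ref{thm:mainthm AJ} by splitting according to whether the prime of smallest degree is unique. Fix $n\ge 1$ and let $\mathcal{S}_n=\{P\in\mathcal{P}:\deg P=n\}$. Then $\pi_{\mathcal{S}_n}(N)=0$ for every $N>n$, so $\mathcal{S}_n$ has natural density $\delta(\mathcal{S}_n)=0$. Theorem \ref{thm:mainthm AJ} therefore applies to $\mathcal{S}_n$. Moreover, $A\in\mathcal{D}(\mathcal{S}_n)$ exactly when $\delta_1(A)=n$ and $A$ has a unique prime divisor of degree $n$.

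\textbf{Step 1: decompose the sum.} Only squarefree $A$ contribute, so the sum over $\delta_1(A)=n$ splits as
\[
\sum_{A\in\mathcal{D}(\mathcal{S}_n)} \;+\; \sum_{\substack{\delta_1(A)=n\\ \#P_{\min}(A)\ge 2}}.
\]
By Theorem \ref{thm:mainthm AJ}, the partial sums of the first part tend to $0$.

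\textbf{Step 2: write the second part as a finite combination of tails.} Let $\mathcal{P}_n$ denote the (finite) set of primes of degree $n$. Each squarefree $A$ with $\delta_1(A)=n$ and $j\ge2$ prime factors of degree $n$ factors uniquely as $A=DC$, where $D$ is a product of $j$ distinct primes in $\mathcal{P}_n$ and every prime factor of $C$ has degree $>n$. Then
\[
\mu(A)=(-1)^j\mu(C),\qquad \Omega(A)=1+\Omega(C),\qquad q^{\deg A}=q^{jn}q^{\deg C}.
\]
There are finitely many such $D$, and $C$ runs over
\[
\mathcal{C}_n=\{C \text{ squarefree}: \text{all prime factors of } C \text{ have degree}>n\}.
\]
Hence it suffices to show that
\[
\sum_{\substack{C\in\mathcal{C}_n\\ \deg C\le y}}\frac{\mu(C)}{q^{\deg C}}
\quad\text{and}\quad
\sum_{\substack{C\in\mathcal{C}_n\\ \deg C\le y}}\frac{\mu(C)\Omega(C)}{q^{\deg C}}
\]
both converge as $y\to\infty$, and to identify their limits.

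\textbf{Step 3: evaluate these sums by induction on $n$.} Cleaner than direct evaluation is to define, for a squarefree $C$ with prime factors of degree $>m$,
\[
F_m=\lim_{y\to\infty}\sum_{\substack{C\in\mathcal{C}_m\\ \deg C\le y}}\frac{\mu(C)}{q^{\deg C}},\qquad
G_m=\lim_{y\to\infty}\sum_{\substack{C\in\mathcal{C}_m\\ \deg C\le y}}\frac{\mu(C)\Omega(C)}{q^{\deg C}},
\]
noting that $\mathcal{C}_0$ is the set of all squarefree monics. For $m=0$, the prime polynomial theorem gives $\sum_{\deg A=k}\mu(A)=0$ for $k\ge2$, so $F_0=1-1=0$. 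Theorem \ref{thm:landau_two_FF} gives $G_0=0$.

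To pass from $m-1$ to $m$, write each $C\in\mathcal{C}_{m-1}$ as $C=D'C'$, where $D'$ is a squarefree product of $j\ge0$ primes of degree $m$ and $C'\in\mathcal{C}_m$. This gives finite linear relations
\[
F_{m-1}=\Big(\sum_j(-1)^j\tbinom{\pi(m)}{j}q^{-jm}\Big)F_m=(1-q^{-m})^{\pi(m)}F_m,
\]
together with a similar relation expressing $G_{m-1}$ in terms of $G_m$ and $F_m$, using $\Omega(D'C')=\Omega(C')+[j\ge1]$. Since $(1-q^{-m})^{\pi(m)}\ne0$, these relations can be inverted. Granted existence of the limits, they yield $F_m=0$ and then $G_m=0$ for all $m$. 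Substituting into Step 2, the second part of the sum has limit $0$, which proves the corollary.

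\textbf{Main obstacle: existence of the limits.} The inversion in Step 3 needs the limits $F_m$ and $G_m$ to exist. The relation above only shows that each partial sum for $m-1$ is a finite combination of shifted partial sums for $m$, and inverting a finite convolution requires some care. I would handle this by expanding $(1-q^{-m})^{-\pi(m)}$ as a series with terms decaying geometrically in degree. This expresses each $\mathcal{C}_m$ partial sum as a convolution of the convergent $\mathcal{C}_{m-1}$ partial sums against an absolutely summable sequence. A Mertens-type dominated-convergence argument then gives both existence and the value of the limit.
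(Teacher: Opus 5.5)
Your proof is correct, but it is organized differently from the paper's, and it turns out not to need Theorem \ref{thm:mainthm AJ} at all.

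The paper applies Theorem \ref{thm:mainthm AJ} to a singleton $\mathcal{S}_k=\{P_k\}$ with $\deg P_k=k$. The sum over $\mathcal{D}(\mathcal{S}_k)$ equals $-q^{-k}$ times $1$ plus the $\mu$-sum and the $\mu\Omega$-sum over $\delta_1(B)\ge k+1$. The $\mu$-sum is shown to be $-1$ via $\prod_{\deg P\ge k+1}(1-u^{\deg P})=(1-qu)/\prod_{\deg P\le k}(1-u^{\deg P})$, which is analytic on $|u|<1$. Hence the $\mu\Omega$-tail over $\delta_1\ge k+1$ vanishes, i.e.\ $G_k=0$ in your notation, and the existence of that limit is inherited from Theorem \ref{thm:mainthm AJ}. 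Differencing consecutive tails, with the case $k=0$ given by Theorem \ref{thm:landau_two_FF}, finishes the proof.

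You instead use $\mathcal{S}_n=\{P:\deg P=n\}$ for the part with a unique minimal prime, and an induction on $m$ for the rest. But your Step 3 already proves the corollary by itself. The squarefree $A$ with $\delta_1(A)=n$ are exactly the elements of $\mathcal{C}_{n-1}\setminus\mathcal{C}_n$, so the sum in question tends to $G_{n-1}-G_n=0$. Step 1 is therefore superfluous, and with it Theorem \ref{thm:mainthm AJ}, the $\Psi_2$ bound and the second-order duality. Your argument shows the corollary is a consequence of Theorem \ref{thm:landau_two_FF} alone.

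The price is the existence argument. It is genuinely needed, because for the $\Omega$-weighted series you only know convergence at $u=1/q$, not analyticity beyond it. It works as you sketch. Let $a(u)=(1-u^m)^{\pi(m)}$, and let $\Phi_m,\Gamma_m$ be the generating functions of $\mu$ and $\mu\Omega$ over $\mathcal{C}_m$. As formal power series, $\Phi_m=a^{-1}\Phi_{m-1}$ and $\Gamma_m=a^{-1}\Gamma_{m-1}+(a^{-2}-a^{-1})\Phi_{m-1}$. Since $a^{-1}$ and $a^{-2}$ have nonnegative coefficients summable against $q^{-\deg}$, Mertens' theorem on Cauchy products gives both the existence of $F_m,G_m$ and their values.

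A minor correction: $\sum_{\deg A=k}\mu(A)=0$ for $k\ge2$ comes from $\sum_A\mu(A)u^{\deg A}=1-qu$, not from the prime polynomial theorem.
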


	\section{Higher order Dualities over Global Function Fields}\label{sec:global}
	
	In this section, we formulate Theorem \ref{thm:duality in ff} in the setting of global function fields. Let $K$ be a global function field with constant field $\mathbb{F}_{q}$. A prime divisor (or simply a prime) in $K$ is a discrete valuation ring $R_{P}$ with maximal ideal $P$ such that $\mathbb{F}_{q}\subset R_{P}$ and the quotient field of $R_{P}$ is $K$. The associated residue field at $P$ is defined as $\kappa_{P}:=R_{P}/P$, which is a finite extension of $\mathbb{F}_{q}$. We define 
	\begin{align}
		\deg P = [\kappa_P : \mathbb{F}_q],
	\end{align}
	the degree of the field extension.
	The divisor group $\mathfrak{D}$ of $K$ is the free abelian group generated by $\mathcal{P}$, the set of primes in $K$, defined as
	\begin{align}
		\mathfrak{D}:=\left\{\sum_{P\in\mathcal{P}}a_{P}P : a_P \in \mathbb{Z} \text{ and } a_P=0 \text{ for all but finitely many primes }P\right\}.
	\end{align}
	If $a_{P} \neq 0$, we write $P \mid D$. The set of effective divisors is defined as 
	\begin{align}
		\mathfrak{D}^+:=\left\{\sum_{P\in\mathcal{P}}a_{P}P \in \mathfrak{D} : a_P \geq 0 \text{ for all primes }P\right\}.
	\end{align}
	For $A,B \in \mathfrak{D}$, we say $A\geq B$ if $A-B \in \mathfrak{D}$ is effective. This is analogous to $B\mid A$ in $F_q[T]$.
	
	For an effective divisor $D$, the Möbius function $\mu(D)$ is defined as:
	\begin{equation}
		\mu(D):=\begin{cases}
			1 & \text{if } D=0,\\
			(-1)^{k} & \text{if } D=P_{1}+\cdots+P_{k} \text{ for distinct primes } P_i \text{ and } k > 0,\\
			0 & \text{otherwise }.
		\end{cases}
	\end{equation}
	Following the same notation as in Section \ref{sec:intro}, we let $\mathcal{N}(D)$ denote the set of degrees of the prime factors of $D$ and let $\Omega(D)$ denote the cardinality of $\mathcal{N}(D)$.  Let $\Delta_k(D)$ and $\delta_k(D)$ denote the $k$-th largest and the $k$-th smallest elements of $\mathcal{N}(D)$ respectively, with $\Delta_k(D)=\delta_k(D)=0$ for $k>\Omega(D)$. Define
	\begin{align}
		P_{\min}(D) := \{\text{prime }P\mid D: \deg P = \delta_1(D)\}.
	\end{align}
	Generalizing \eqref{important_definitions}, for a subset $\mathcal{S}$ of the set of primes in $K$, we define
	\begin{align} 
		&\mathfrak{D}(K,\mathcal{S}) := \{D \in \mathfrak{D}^+ : P_{\min}(D) \text{ has only one element }P \text{ and } P\in\mathcal{S}\}, \text{ and}\\
		&Q_\mathcal{S}^{(k)}(D) := \#\{P\in \mathcal{S}:P\mid D,\deg P=\Delta_k(D)\}.
	\end{align}
	We are now ready to state higher order dualities over a global function field $K$. 
	
	\begin{theorem}\label{thm:global_duality}
		Let $K$ be a global function field and let $\mathcal{S}$ be a subset of primes in $K$. Let $f:\mathbb{N}\rightarrow\mathbb{C}$ be an arithmetic function with $f(0)=0$. For any effective divisor $A\in\mathfrak{D}^{+}$ and any positive integer $k$, we have
		\begin{equation}\label{higher duality in global ff}
			\sum_{\substack{B\in\mathfrak{D}(K,\mathcal{S}) \\ B \le A}}\mu(B)\binom{\Omega(B)-1}{k-1}f(\delta_{1}(B))=(-1)^{k}Q_{\mathcal{S}}^{(k)}(A)f(\Delta_{k}(A)).
		\end{equation}
	\end{theorem}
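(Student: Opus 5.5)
Since $\mu(B)=0$ unless $B$ is squarefree, the sum in \eqref{higher duality in global ff} only involves squarefree effective divisors $B\le A$. Replacing $A$ by its support $\sum_{P\mid A}P$ changes neither side, so we may assume $A$ is squarefree. The plan is to group the primes of $A$ by degree. Let $d_1<d_2<\cdots<d_r$ be the elements of $\mathcal{N}(A)$, so $r=\Omega(A)$. Let $\mathcal{P}_i$ be the set of primes dividing $A$ of degree $d_i$, and put $n_i=\#\mathcal{P}_i$. If $A=0$, both sides vanish, since the sum is empty and $f(0)=0$.

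\textbf{Parametrizing the sum.} A divisor $B\in\mathfrak{D}(K,\mathcal{S})$ with $B\le A$ is determined by three choices. First, an index $i$ with $\delta_1(B)=d_i$. Second, a single prime $P\in\mathcal{P}_i\cap\mathcal{S}$; uniqueness of the smallest-degree prime forces exactly one prime of degree $d_i$. Third, an arbitrary subset $C$ of the primes of $A$ with degrees in $\{d_{i+1},\dots,d_r\}$. For such $B$ we have $\mu(B)=-\mu(C)$ and $\Omega(B)=1+\Omega(C)$. The left side therefore becomes
\[
-\sum_{i=1}^{r}\#(\mathcal{P}_i\cap\mathcal{S})\,f(d_i)\sum_{C}\mu(C)\binom{\Omega(C)}{k-1}.
\]

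\textbf{Evaluating the inner sum.} Write the $C$-sum as a product over the degree classes $j>i$. In class $j$, the subsets of $\mathcal{P}_j$ contribute $1$ from the empty subset. The nonempty subsets contribute $\sum_{\emptyset\ne T\subseteq\mathcal{P}_j}(-1)^{|T|}=-1$, and each of them increments $\Omega$ by one. Encoding $\binom{\Omega(C)}{k-1}$ as the coefficient of $x^{k-1}$ in $(1+x)^{\Omega(C)}$, the inner sum equals
\[
[x^{k-1}]\prod_{j>i}\bigl(1-(1+x)\bigr)=[x^{k-1}](-x)^{r-i}.
\]
This is $(-1)^{k-1}$ when $r-i=k-1$ and $0$ otherwise. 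Only the index $i=r-k+1$ survives, which requires $k\le r$, and there $d_i=\Delta_k(A)$. The left side is then $(-1)^k\#(\mathcal{P}_{r-k+1}\cap\mathcal{S})f(\Delta_k(A))=(-1)^kQ^{(k)}_{\mathcal{S}}(A)f(\Delta_k(A))$. If $k>r$, no term survives, which matches the right side because $\Delta_k(A)=0$ and $f(0)=0$.

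\textbf{Main obstacle.} The only delicate point is the bookkeeping that the weight $\binom{\Omega(B)-1}{k-1}$ depends on the number of distinct degrees rather than the number of primes. This is exactly why the grouping by degree classes is needed: it turns each class's alternating sum into the factor $-x$. The argument uses only the free structure of $\mathfrak{D}^+$ and the finiteness of the support of $A$, so it is identical to the proof of Theorem \ref{thm:duality in ff}. One could alternatively deduce it from that proof verbatim.
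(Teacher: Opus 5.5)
Your proof is correct, but it is organized differently from the paper's. The paper argues by induction on $k$. It imports the case $k=1$ from \cite[Lemma 3.1]{duan-wang-yi}. It then uses Pascal's rule with the inductive hypothesis to reduce the step to evaluating $\sum \mu(B)\binom{\Omega(B)}{r}f(\delta_1(B))$. After the same splitting $B=P_t+B'$ by the smallest degree class that you use, it evaluates the inner sums $\sum_{B'}\mu(B')\binom{\Omega(B')+1}{r}$ by Corollary \ref{lem:sum muomega gen in ff}, which rests on the forward-difference Lemma \ref{lem: weighted sum mu in ff}. Two classes, $t=m-r+1$ and $t=m-r$, survive, and the first cancels against the inductive hypothesis.

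You instead evaluate the weight $\binom{\Omega(C)}{k-1}$ directly, using the factorization $\sum_C\mu(C)(1+x)^{\Omega(C)}=(-x)^{r-i}$ over degree classes. This buys several things. The argument is non-inductive and self-contained: it does not need the first-order duality as input, since it recovers it at $k=1$. It treats all $k$ at once, because in your notation it gives the polynomial identity $\sum_{B}\mu(B)(1+x)^{\Omega(B)-1}f(\delta_1(B))=-\sum_{i}\#(\mathcal{P}_i\cap\mathcal{S})f(d_i)(-x)^{r-i}$. It shows transparently why only the $k$-th largest degree class survives. It is also fully explicit in the global setting, where the paper only points back to the $\mathbb{F}_q[T]$ argument and a global reformulation of Corollary \ref{lem:sum muomega gen in ff}. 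Your initial reduction to squarefree $A$ removes the paper's bookkeeping with prime powers.

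The paper's route has the merit of reusing lemmas it had already set up. However, your inner-sum computation amounts to applying Lemma \ref{lem: weighted sum mu in ff} with weight $\binom{x}{k-1}$. So the induction could have been avoided even within the paper's own framework.
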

	
	\section{Surrounding literature}\label{sec:motivation}
	
	Alladi's original work \cite{alladi} initiated the use of the duality identity \eqref{alladi-duality} for $k=1$ to detect densities through conditions on the smallest prime factor. This framework was subsequently extended in several arithmetic settings. Dawsey \cite{dawsey} obtained a Chebotarev-density version of \eqref{alladi-main-eq} for finite Galois extensions $K/\mathbb{Q}$. Sweeting and Woo \cite{sweeting-woo} generalized this to arbitrary number fields $L/K$, and Kural, McDonald and Sah \cite{KMS} replaced the Chebotarev condition by arbitrary subsets of primes possessing natural densities. In the function field setting, Duan, Wang and Yi \cite{duan-wang-yi} established analogues of the results of Kural, McDonald and Sah over global function fields. In particular, their duality is the first-order identity which serves as the base case for our higher-order result.
	
	Generalizing the second identity in \eqref{alladi-johnson-eqs}, Tenenbaum \cite{tenenbaum2025} showed that the vanishing holds even if the arithmetic progression condition is replaced by any arbitrary subset of primes possessing a natural density. Sengupta \cite{sengupta} used the duality identity \eqref{alladi-duality} for $k=2$ to establish the result for Galois extensions $K/\mathbb{Q}$, proving the vanishing of weighted M\"{o}bius sums under Chebotarev conditions. In his recent work \cite{sengupta-new}, he extended this framework to arbitrary number fields and derived general higher-order duality identities for prime ideals. A fundamental distinction between his setting and ours is the choice of the counting function. Sengupta's identities involve $\omega_K(I)$, the number of distinct prime ideals in the factorization of an ideal $I$, whereas our identities over $\mathbb{F}_q[T]$ involve $\Omega(A)$, the number of distinct prime degrees occurring in the factorization of $A$.
	
	The study of polynomials free of large prime factors has substantial literature. Panario, Gourdon and Flajolet \cite{panario-gourdon-flajolet} developed an analytic treatment of smooth polynomials over finite fields and, in particular, studied the joint distribution of the two largest degrees occurring among the irreducible factors. Gorodetsky \cite{Gorodetsky2024Smooth} later obtained sharper uniform estimates for the usual smooth-polynomial counting function, while in the present paper we use a convenient uniform upper bound due to Thorne \cite{thorne}. Corresponding estimates have also been developed in the more general setting of arithmetical semigroups by Manstavi\v{c}ius \cite{manstivicius}. The distribution of distinct degree values in polynomial factorizations has also been studied earlier. Knopfmacher and Warlimont \cite{knopfmacher-warlimont} considered polynomials whose irreducible factors have pairwise distinct degrees, while Knopfmacher \cite{knopfmacher-degrees} determined the asymptotic mean number of distinct irreducible-factor degrees.

	Beyond density formulas of this type, Alladi's framework has been developed in several other directions. Wang extended these ideas using Ramanujan sums \cite{wang-ramanujan} and Dirichlet convolutions \cite{wang-dirichlet}. Ono, Schneider, and Wagner obtained partition-theoretic formulas connecting such densities with $q$-series in \cite{OSW-1} and \cite{OSW-2}. Duan, Ma, and Yi \cite{duan-ma-yi} generalized Alladi's formula to arithmetical semigroups, while Wang \cite{wang-logarithm} recently established a logarithmic analogue of the duality identity. Our work adds higher-order global function field dualities and their weighted M\"{o}bius applications to this circle of results.
	
	More recently, Alladi and Sengupta \cite{alladi-sengupta} have developed the classical theory to all higher orders. Using the $k$-th order duality and the distribution of the $k$-th largest prime factor in reduced residue classes, they further showed that, for $k\geq 3$,
	\begin{align}
		\sum_{\substack{n\geq 2\\ p_1(n)\equiv a\bmod m}}
		\frac{\mu(n)\omega(n)^{k-1}}{n}=0,
	\end{align}
	for positive integers $1\leq a\leq m$ with $\gcd(a,m)=1$. This suggests that our Theorem \ref{thm:mainthm AJ} can be generalized to allow for higher powers of $\Omega(A)$.
	We relegate this study to forthcoming work.
	
	\section{Prerequisites}\label{prelems}

	We recall Mertens' estimate over $\mathbb{F}_q[T]$.

	\begin{lemma}\cite[Lemma 2.1]{thorne}
		Let $\gamma$ denote the Euler's constant. Then, we have that
		\begin{equation}\label{merten}
			\prod_{\substack{\text{prime }P\\\deg P\leq n}}\big(1-q^{-\deg P}\big)^{-1}=ne^{\gamma}(1+o_n(1)).
		\end{equation}
	\end{lemma}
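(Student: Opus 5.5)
The plan is to take logarithms and compare with the exact Euler product identity for the zeta function of $\mathbb{F}_q[T]$. First I establish the generating-function identity. Since there are exactly $q^m$ monics of degree $m$, unique factorization gives, for $|u|<1/q$,
\begin{equation}
\prod_{\text{prime }P}\bigl(1-u^{\deg P}\bigr)^{-1}=\sum_{m\ge0}q^mu^m=\frac{1}{1-qu}.
\end{equation}
Taking logarithms and comparing coefficients of $u^m$ gives the exact identity
\begin{equation}
\sum_{\substack{(P,k):\ k\ge1\\ k\deg P=m}}\frac1k=\frac{q^m}{m}.
\end{equation}
This is equivalent to $\sum_{d\mid m}d\,\pi_{\mathcal P}(d)=q^m$. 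In particular it yields the crude bound $\pi_{\mathcal P}(d)\le q^d/d$.

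Next I expand the logarithm of the product in the lemma:
\begin{equation}
\log\prod_{\deg P\le n}\bigl(1-q^{-\deg P}\bigr)^{-1}=\sum_{\deg P\le n}\sum_{k\ge1}\frac{q^{-k\deg P}}{k}.
\end{equation}
I split this sum into the pairs with $k\deg P\le n$ and the pairs with $\deg P\le n<k\deg P$. Grouping the first part by $m=k\deg P$ and using the identity above, it equals exactly
\begin{equation}
\sum_{m\le n}q^{-m}\cdot\frac{q^m}{m}=\sum_{m\le n}\frac1m=\log n+\gamma+O(1/n).
\end{equation}

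The remaining step, which I expect to be the only one needing care, is to show that the tail
\begin{equation}
\sum_{d\le n}\pi_{\mathcal P}(d)\sum_{k>n/d}\frac{q^{-kd}}{k}
\end{equation}
is $o(1)$. I use $\pi_{\mathcal P}(d)\le q^d/d$, so the $d$-th term is at most $\frac1d\sum_{k>n/d}q^{-(k-1)d}/k$. Let $k_0=\lfloor n/d\rfloor+1$ be the smallest admissible $k$. Then $dk_0>n$, and $(k_0-1)d=\lfloor n/d\rfloor d\ge n/2$. The inner series is geometric with ratio $q^{-d}\le 1/2$, so it is at most $2q^{-(k_0-1)d}/k_0$. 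Hence the $d$-th term is at most
\begin{equation}
\frac{2}{dk_0}q^{-n/2}\le\frac{2}{n}q^{-n/2}.
\end{equation}
Summing over $d\le n$, the whole tail is $O(q^{-n/2})=o(1)$.

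Combining the two parts, the logarithm of the product equals $\log n+\gamma+o(1)$. Exponentiating gives $ne^{\gamma}(1+o_n(1))$, which is \eqref{merten}.
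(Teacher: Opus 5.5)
Your proof is correct. The paper gives no proof of this lemma; it simply imports it from \cite[Lemma 2.1]{thorne}. So there is no argument in the text to compare against, and yours serves as a self-contained derivation.

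Your route is the standard one. You take logarithms, use the exact identity $\sum_{d\mid m} d\,\pi_{\mathcal P}(d)=q^m$ to turn the pairs with $k\deg P\le n$ into exactly $H_n=\sum_{m\le n}1/m$, and bound the remaining pairs with $\deg P\le n<k\deg P$ by $O(q^{-n/2})$.

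It is worth noticing what this buys you:
\begin{itemize}
\item You need only that exact identity and the trivial consequence $\pi_{\mathcal P}(d)\le q^d/d$. You do not need the prime number theorem with error term (Lemma \ref{lem:PNT in ff}).
\item You get more than the statement asks for: the logarithm equals $\log n+\gamma+O(1/n)$, so the product is $ne^{\gamma}(1+O(1/n))=ne^{\gamma}+O(1)$.
\end{itemize}

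Two small points of presentation:
\begin{itemize}
\item The inner series $\sum_{k\ge k_0}q^{-(k-1)d}/k$ is not itself geometric. You should say explicitly that you first bound $1/k\le 1/k_0$.
\item The inequality $\lfloor n/d\rfloor d\ge n/2$ rests on $\lfloor x\rfloor\ge x/2$ for $x\ge 1$. This applies because $d\le n$.
\end{itemize}
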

	The following result is a function field analogue of the prime number theorem.
	\begin{lemma} \cite[Theorem 5.12]{rosen}\label{lem:PNT in ff}
		Let $\pi_\mathcal{P}(n)$ denote the number of primes of degree $n$ in $\mathbb{F}_q[T]$. We have
		\begin{equation}\label{Lem: prime_num_thm_eq}
			\pi_\mathcal{P}(n)=\frac{q^n}{n}+O\left(\frac{q^{n/2}}{n}\right).
		\end{equation}
	\end{lemma}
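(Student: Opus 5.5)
The plan is to derive the exact formula for $\pi_\mathcal{P}(n)$ from the zeta function of $\mathbb{F}_q[T]$ via M\"obius inversion, and then bound the error terms trivially.

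\textbf{Step 1: Gauss's identity.} Consider the zeta function $Z(u)=\sideset{}{'}\sum_{A} u^{\deg A}$. Since there are exactly $q^n$ monics of degree $n$, we have $Z(u)=\sum_{n\ge 0}q^nu^n=(1-qu)^{-1}$ for $|u|<1/q$. On the other hand, unique factorization in $\mathbb{F}_q[T]$ gives the Euler product
\begin{equation}
Z(u)=\prod_{P\in\mathcal{P}}\bigl(1-u^{\deg P}\bigr)^{-1}.
\end{equation}
It converges absolutely in the same disc, since the number of primes of degree $d$ is at most $q^d$. Applying $u\frac{d}{du}\log$ to both expressions gives
\begin{equation}
\sum_{n\ge1}q^nu^n=\sum_{P\in\mathcal{P}}\sum_{k\ge1}\deg P\,u^{k\deg P}.
\end{equation}
Comparing the coefficients of $u^n$ yields
\begin{equation}
q^n=\sum_{d\mid n}d\,\pi_\mathcal{P}(d).
\end{equation}
Alternatively, one can use that $T^{q^n}-T$ is the product of all monic primes whose degree divides $n$. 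Either route works.

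\textbf{Step 2: M\"obius inversion.} Inverting Gauss's identity over the divisors of $n$ gives
\begin{equation}
n\,\pi_\mathcal{P}(n)=\sum_{d\mid n}\mu(n/d)\,q^d.
\end{equation}

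\textbf{Step 3: Isolate the main term and bound the rest.} The term $d=n$ contributes $q^n$. Every other divisor satisfies $d\le n/2$, so the remaining terms have absolute value at most
\begin{equation}
\sum_{1\le d\le n/2}q^d\le \frac{q}{q-1}\,q^{n/2}\le 2q^{n/2}.
\end{equation}
Dividing by $n$ gives $\pi_\mathcal{P}(n)=\frac{q^n}{n}+O\!\left(\frac{q^{n/2}}{n}\right)$, with an absolute implied constant.

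The only step with real content is Step 1, namely justifying the Euler product or the factorization of $T^{q^n}-T$. This is standard once unique factorization in $\mathbb{F}_q[T]$ and the count of monics are in hand. The remaining steps are routine.
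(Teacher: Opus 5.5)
Your proof is correct and complete. The Euler product gives Gauss's identity $q^n=\sum_{d\mid n}d\,\pi_\mathcal{P}(d)$. M\"obius inversion then gives $n\pi_\mathcal{P}(n)=q^n+\sum_{d\mid n,\,d<n}\mu(n/d)q^d$. The crude bound $\sum_{d\le n/2}q^d\le 2q^{n/2}$ finishes the argument with an explicit constant.

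The paper gives no proof of its own. It cites Rosen's prime number theorem for an arbitrary global function field $K$. There the same logarithmic-derivative and inversion computation is run on $Z_K(u)=L_K(u)/((1-u)(1-qu))$, where $L_K$ is a polynomial of degree $2g$ whose inverse roots $\pi_i$ have absolute value $\sqrt{q}$ by Weil's Riemann hypothesis. This gives $\sum_{d\mid N}d\,a_d(K)=q^N+1-\sum_{i=1}^{2g}\pi_i^N$. For $K=\mathbb{F}_q(T)$ one has $g=0$ and $L_K=1$, so this collapses to your identity. The extra $+1$ comes from the infinite place of degree one, so the cited count agrees with $\pi_\mathcal{P}(n)$ only for $n\ge 2$, which is harmless here.

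Your route is the elementary genus-zero case. It needs only unique factorization and the count of monics, and it counts monic irreducibles of $\mathbb{F}_q[T]$ directly. The cited general theorem buys validity over every global function field, at the price of the Riemann hypothesis for curves.
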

	
	We state a well-known result for the M\"{o}bius function over $\mathbb{F}_q[T]$.
	
	\begin{lemma}\label{lem:sum mu in ff}
		For any monic $A\in\mathbb{F}_q[T]$, we have
		\begin{equation}
			\sideset{}{'}\sum_{B \mid A} \mu(B) = \begin{cases}
				1 & \text{if }A=1,\\
				0 & \text{otherwise}.
			\end{cases}
		\end{equation}
	\end{lemma}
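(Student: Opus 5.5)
This is the standard Möbius cancellation: split off the case $A=1$, and for $A\neq 1$ pair the divisors $B$ according to whether they contain a fixed prime.

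\textbf{Case $A=1$.} The only monic divisor is $B=1$, and $\mu(1)=1$, so the sum equals $1$.

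\textbf{Case $A\neq 1$.} Since $\mu(B)=0$ unless $B$ is squarefree, only squarefree divisors of $A$ contribute. These are exactly the products of subsets of the distinct primes dividing $A$. Let $P_1,\dots,P_r$ be these primes, with $r\ge 1$. The squarefree divisors are in bijection with subsets $I\subseteq\{1,\dots,r\}$, and $\mu\bigl(\prod_{i\in I}P_i\bigr)=(-1)^{|I|}$. Grouping by $j=|I|$ gives
\begin{equation}
	\sideset{}{'}\sum_{B\mid A}\mu(B)=\sum_{j=0}^{r}\binom{r}{j}(-1)^j=(1-1)^r=0 .
\end{equation}

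An equivalent route is multiplicativity. The function $A\mapsto\sideset{}{'}\sum_{B\mid A}\mu(B)$ is multiplicative, because unique factorization holds in $\mathbb{F}_q[T]$. For a prime power $P^e$ with $e\ge1$, its value is $\mu(1)+\mu(P)=1-1=0$.

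There is no real obstacle. The only input needed is unique factorization into monic irreducibles in $\mathbb{F}_q[T]$, which makes the subset description of the squarefree divisors valid.
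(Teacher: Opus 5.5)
Your proof is correct. Both routes are complete, and the only input is unique factorization into monic irreducibles in $\mathbb{F}_q[T]$: the subset count $\sum_{j}\binom{r}{j}(-1)^j=(1-1)^r=0$ for $r\ge 1$, and multiplicativity with value $\mu(1)+\mu(P)=0$ at each prime power.

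The paper gives no proof of this lemma; it simply calls it well known. It later remarks that the lemma is the case $f\equiv 1$ of Lemma~\ref{lem: weighted sum mu in ff}. That remark cannot serve as a proof, because the proof of Lemma~\ref{lem: weighted sum mu in ff} itself invokes this lemma to obtain $\sideset{}{'}\sum_{1\neq B\mid A_{i}}\mu(B)=-1$. So a direct argument like yours is what the paper actually relies on.

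Your binomial grouping over distinct primes is also the same mechanism the paper uses one level up, grouping over distinct prime degrees, in the proof of Lemma~\ref{lem: weighted sum mu in ff}.
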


	We now state an interesting result for weighted sums of  M\"{o}bius function over $\mathbb{F}_q[T]$.
	For a function 
	$f:\mathbb{R}\to\mathbb{R}$, 
	the $n$-th forward difference of $f$, $D_nf(x)$ is defined by the recurrence relation 
	$D_n f(x)=D_{n-1}f(x+1)-D_{n-1}f(x)$
	for $n\geq1$ with $D_0 f(x)=f(x)$. Then, by the principle of induction, it follows that
	\begin{equation}\label{eq:forward difference}
		D_nf(x)=\sum_{k=0}^n (-1)^{n-k} {n \choose k}f(x+k).
	\end{equation}
	Note that if $f$ is a real polynomial of degree $d$, then $D_nf(x)=0$ for all $n\geq d+1$.

	\begin{lemma}\label{lem: weighted sum mu in ff}
		For any monic $A\in\mathbb{F}_q[T]$ and any function $f:\mathbb{R}\to\mathbb{C}$, we have that
		\begin{equation}
			\sideset{}{'}\sum_{B \mid A} \mu(B)f(\Omega(B)) = (-1)^{\Omega(A)}D_{\Omega(A)}f(0).
		\end{equation}
	\end{lemma}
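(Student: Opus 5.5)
We argue by grouping the squarefree divisors of $A$ according to the set of prime degrees they involve. Only squarefree $B\mid A$ contribute. Let $\mathcal{N}(A)=\{d_1,\dots,d_r\}$ with $r=\Omega(A)$. For each $d_i$ let $n_i\ge 1$ be the number of distinct primes of degree $d_i$ dividing $A$. A squarefree divisor $B$ is then determined by a choice, for each $i$, of a subset of the $n_i$ primes of degree $d_i$. We have $\Omega(B)=j$, where $j$ is the number of indices $i$ for which the chosen subset is nonempty.

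The key step is to fix the subset $I\subseteq\{1,\dots,r\}$ of degrees that actually occur in $B$ and sum $\mu(B)$ over all such $B$. For each $i\in I$, the sum of $(-1)^{|T|}$ over nonempty subsets $T$ of an $n_i$-element set equals $(1-1)^{n_i}-1=-1$. Since $\mu$ is multiplicative over the disjoint prime sets, the total contribution of the degree pattern $I$ is $(-1)^{|I|}$. It follows that
\begin{equation}
\sideset{}{'}\sum_{B\mid A}\mu(B)f(\Omega(B))=\sum_{j=0}^{r}\binom{r}{j}(-1)^j f(j).
\end{equation}
Here the $j=0$ term comes from $B=1$, consistent with $\Omega(1)=0$.

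Finally we compare with \eqref{eq:forward difference} at $x=0$ and $n=r$:
\begin{equation}
D_rf(0)=\sum_{j=0}^r(-1)^{r-j}\binom{r}{j}f(j),
\end{equation}
so that $(-1)^rD_rf(0)=\sum_j(-1)^j\binom{r}{j}f(j)$, which is the claimed identity. The only point needing care is the inner alternating sum over nonempty subsets: each occurring degree $d_i$ contributes a factor $-1$ regardless of $n_i$. This is exactly why the identity depends on $A$ only through $\Omega(A)$. The case $A=1$ gives $f(0)=D_0f(0)$, in agreement with the formula.
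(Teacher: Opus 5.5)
Your proof is correct and follows essentially the same route as the paper. Both arguments fix which degree classes occur in $B$ and show that each occurring class contributes a factor $-1$. You get this from the alternating sum over nonempty subsets of squarefree choices, while the paper applies Lemma \ref{lem:sum mu in ff} to each block $A_i$. Both then arrive at $\sum_{j}\binom{\Omega(A)}{j}(-1)^j f(j)$ and match it with \eqref{eq:forward difference}.
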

	\begin{proof}
		The statement holds trivially for $A=1$. We let $A$ be a non-constant monic polynomial in $\mathbb{F}_q[T]$. Then for any $B\mid A$, we have $\Omega(B)\leq\Omega(A)$. Hence, we can write
		\begin{align}\label{eq:weighted sum first step}
			\sideset{}{'}\sum_{B \mid A} 
			\mu(B)f(\Omega(B)) 
			=
			\sum_{k=0}^{\Omega(A)} f(k)
			\sideset{}{'}\sum_{\substack{B \mid A \\ \Omega(B)=k}} \mu(B).
		\end{align}
		We write
		$A=A_1A_2\cdots A_{\Omega(A)}$,
		where each $A_i$ is a product of primes of same degree. Then, any $B\mid A$ with $\Omega(B)=k$ can be written uniquely as $B_{i_1}B_{i_2}\cdots B_{i_k}$, where each $i_j$ is chosen distinctly from
		$\{1,2,\cdots \Omega(A)\}$
		and
		$1\neq B_{i_j}\mid A_{i_j}$ for $1\leq j\leq k$. Hence, by multiplicativity of $\mu$, we have
		\begin{align}
			\sideset{}{'}\sum_{\substack{B \mid A \\ \Omega(B)=k}} \mu(B)
			&=
			\sum_{\substack{\text{choose }i_1, i_2, \cdots, i_k\\ \text{from }\{1,2,\cdots,\Omega(A)\}}} \hspace{2mm} \sideset{}{'}\sum_{\substack{1\neq B_{i_j}\mid A_{i_j}\\ \text{for each }1\leq j\leq k}} \mu(B_{i_1})\mu(B_{i_2})\cdots\mu(B_{i_k})
			\\
			&=
			{\Omega(A) \choose k}\prod_{1\leq j\leq k}\hspace{2mm}\sideset{}{'}\sum_{1\neq B_{i_j}\mid A_{i_j}} \mu(B_{i_j})
			\\
			&= 
			{\Omega(A) \choose k} (-1)^k,
		\end{align}
		upon using Lemma \ref{lem:sum mu in ff}.
		Substituting this in \eqref{eq:weighted sum first step} and using the expression of $D_nf(x)$ given in \eqref{eq:forward difference}, we have
		\begin{align}
			\sideset{}{'}\sum_{B \mid A} \mu(B)f(\Omega(B))
			&= \sum_{k=0}^{\Omega(A)} f(k) {\Omega(A) \choose k} (-1)^k
			\\
			&= (-1)^{\Omega(A)}D_{\Omega(A)}f(0).
		\end{align}
	\end{proof}
	\begin{remark}
		The statement of Lemma \ref{lem: weighted sum mu in ff} holds even in the classical setting, more precisely, one has 
		\begin{align}
			\sum_{d\mid n}\mu(d)f(\omega(d))=(-1)^{\omega(n)}D_{\omega(n)}f(0),
		\end{align}
		where $n$ is a positive integer. This is of independent interest and follows by the same proof, though we do not use this for our results. 
	\end{remark}
	
	We note that taking $f$ to be identically one in Lemma \ref{lem: weighted sum mu in ff} yields Lemma \ref{lem:sum mu in ff}. Moreover, taking $f(x)=x$ yields the following corollary.
	\begin{corollary}\label{lem:sum muomega in ff}
		For any monic polynomial $A\in\mathbb{F}_q[T]$, we have that
		\begin{equation}
			\sideset{}{'}\sum_{B\mid A}\mu(B)\Omega(B) = \begin{cases}
				-1 & \text{if } A \text{ is a product of primes of same degree}, \\
				0 & \text{otherwise}.
			\end{cases}.
		\end{equation}
	\end{corollary}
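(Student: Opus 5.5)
This corollary follows from Lemma \ref{lem: weighted sum mu in ff} with the specific choice $f(x)=x$. The plan is to evaluate the forward difference $D_{\Omega(A)}f(0)$ for this linear $f$ in each range of $\Omega(A)$.

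Since $f(x)=x$ is a polynomial of degree $1$, the remark after \eqref{eq:forward difference} gives $D_nf(0)=0$ for every $n\ge 2$. So whenever $\Omega(A)\ge 2$, meaning $A$ has prime divisors of at least two distinct degrees, the sum $\sideset{}{'}\sum_{B\mid A}\mu(B)\Omega(B)$ vanishes.

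It remains to treat $\Omega(A)\le 1$.

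\emph{Case $\Omega(A)=1$.} Here $A\neq 1$ and every prime divisor of $A$ has the same degree. We have
\[
D_1f(0)=f(1)-f(0)=1,
\]
so the sum equals $(-1)^1\cdot 1=-1$. This is the first case of the statement.

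\emph{Case $\Omega(A)=0$.} This means $A=1$, and the sum is $\mu(1)\Omega(1)=0=(-1)^0D_0f(0)=f(0)$, consistent with the lemma. Whether $A=1$ counts as an (empty) product of primes of the same degree is a matter of convention. The formula gives $0$ for $A=1$, so the first case should be read as requiring $A\neq1$, with $A=1$ falling into the ``otherwise'' case.

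There is no real obstacle here. The only points needing care are the vanishing of higher forward differences of a linear function, which is immediate from \eqref{eq:forward difference} and the binomial identities $\sum_k(-1)^k\binom nk=\sum_k(-1)^kk\binom nk=0$ for $n\ge2$, and the bookkeeping of the trivial case $A=1$.
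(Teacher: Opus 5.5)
Your proposal is correct and follows the paper's own argument: the paper obtains this corollary directly by taking $f(x)=x$ in Lemma \ref{lem: weighted sum mu in ff}, which amounts to your computation $D_1f(0)=1$ and $D_nf(0)=0$ for $n\ge 2$. Your remark that $A=1$ gives $0$, so the first case should be read with $A\neq 1$, is a correct clarification of the statement. It is harmless for the paper, since the corollary is only applied to monics of degree $x\ge 1$ in the proof of Theorem \ref{thm:landau_two_FF}.
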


	\begin{corollary}\label{lem:sum muomega gen in ff}
		For any monic $A$ in $\mathbb{F}_q[T]$ and for any integer $ \ell \geq 0$, we have
		\begin{equation}
			\sideset{}{'}\sum_{B \mid A}\mu(B)
			{\Omega(B)+1\choose \ell + 1} =
			\begin{cases}
				(-1)^{\Omega(A)} &\text{if } \Omega(A) =\ell \text{ or }\Omega(A)=\ell+1,\\
				0 &\text{otherwise}.
			\end{cases}
		\end{equation} 
	\end{corollary}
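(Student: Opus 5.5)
This follows directly from Lemma \ref{lem: weighted sum mu in ff} applied to the polynomial function $f(x)=\binom{x+1}{\ell+1}$, the binomial coefficient viewed as a polynomial in $x$ of degree $\ell+1$. With $N=\Omega(A)$, the lemma gives
\begin{equation}
\sideset{}{'}\sum_{B\mid A}\mu(B)\binom{\Omega(B)+1}{\ell+1}=(-1)^{N}D_{N}f(0),
\end{equation}
so it remains to compute the forward difference $D_N f(0)$.

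We use Pascal's rule $\binom{x+2}{j}-\binom{x+1}{j}=\binom{x+1}{j-1}$, which says exactly that $D_1\binom{x+1}{j}=\binom{x+1}{j-1}$. Iterating $N$ times gives
\begin{equation}
D_N f(x)=\binom{x+1}{\ell+1-N},
\end{equation}
with the convention that $\binom{y}{j}=0$ for $j<0$. For $N\ge \ell+2$ this vanishes, which is consistent with $f$ being a polynomial of degree $\ell+1$.

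Evaluating at $x=0$ gives $D_Nf(0)=\binom{1}{\ell+1-N}$. This equals $1$ exactly when $\ell+1-N\in\{0,1\}$, that is, when $N=\ell+1$ or $N=\ell$, and equals $0$ otherwise. The latter covers both $N\ge\ell+2$ (negative lower index) and $N\le \ell-1$ (lower index at least $2$ exceeding the upper index $1$). Multiplying by $(-1)^N$ gives the claimed formula.

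The case $A=1$, where $N=0$, is already covered: the sum is $\binom{1}{\ell+1}$, which is $1$ if $\ell=0$ and $0$ otherwise, matching the formula. The only point needing care is the convention $\binom{y}{j}=0$ for negative $j$ when iterating Pascal's rule. As a check, one can instead verify the result directly from the explicit sum $\sum_k(-1)^k\binom{N}{k}\binom{k+1}{\ell+1}$ via Vandermonde-type identities.
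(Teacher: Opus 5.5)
Your proof is correct and follows the paper's route: both apply Lemma \ref{lem: weighted sum mu in ff} with $f(x)=\binom{x+1}{\ell+1}$ and reduce the problem to computing $D_{\Omega(A)}f(0)$. The only difference is how that forward difference is evaluated. You get it in one step from the iterated Pascal rule $D_N\binom{x+1}{\ell+1}=\binom{x+1}{\ell+1-N}$, which covers every case at once. The paper instead splits into cases: $\Omega(A)\le \ell-1$ (every term vanishes), $\Omega(A)\ge \ell+2$ (a degree argument), and $\Omega(A)\in\{\ell,\ell+1\}$ (direct evaluation of the alternating sum, where only one or two terms survive).
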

	\begin{proof}
		If $\Omega(A)\leq \ell-1$, then for any monic $B \mid A$, we have $0\leq \Omega(B)\leq \ell-1$ and hence 
		\begin{align}
			{\Omega(B)+1\choose \ell + 1}=0.
		\end{align}
		As a result, we have that whenever $\Omega(A)\leq \ell-1$, 
		\begin{equation}
			\sideset{}{'}\sum_{B \mid A}\mu(B)
			{\Omega(B)+1\choose \ell + 1}=0.
		\end{equation}
		
		Now, consider the function $f_\ell:\mathbb{R}\to\mathbb{R}$, defined as $f_\ell(x)={x+1\choose \ell + 1}$.
		We note that $f_\ell(x)$ is just a polynomial of degree $\ell +1$. We consider three cases.
		
		\textbf{Case 1 : }If $\Omega(A)\geq \ell+2$, then $D_{\Omega(A)}f_\ell$ is identically zero.
		
		\textbf{Case 2 : }If $\Omega(A)=\ell+1$, 
		then by \eqref{eq:forward difference},
		\begin{equation}\label{eq:l+1}
			D_{\Omega(A)}f_\ell(0) = \sum_{k=0}^{\Omega(A)} (-1)^{\Omega(A)-k}{\Omega(A) \choose k} {k+1 \choose \Omega(A)}.
		\end{equation}
		Since every summand on the right hand side of \eqref{eq:l+1} vanishes for every $k\leq \Omega(A)-2$, we get
		\begin{equation}
			D_{\Omega(A)}f_\ell(0) = -\Omega(A)+\Omega(A)+1 = 1.
		\end{equation}
		
		\textbf{Case 3 : } If $\Omega(A)=\ell$, then by \eqref{eq:forward difference},
		\begin{equation}\label{eq:l+2}
			D_{\Omega(A)}f_\ell(0) = \sum_{k=0}^{\Omega(A)} (-1)^{\Omega(A)-k}{\Omega(A) \choose k} {k+1 \choose \Omega(A)+1}.
		\end{equation}
		Since every summand on the right hand side of \eqref{eq:l+2} vanishes for every $k\leq \Omega(A)-1$, we get
		\begin{equation}
			D_{\Omega(A)}f_\ell(0) = 1.
		\end{equation}
		
		hence by Lemma \ref{lem: weighted sum mu in ff}, the lemma is proved.
		
	\end{proof}
	
	\normalcolor
	
	Recall the definition of $Q_\mathcal{S}^{(1)}(A)$ from \eqref{important_definitions}. We state an important asymptotic result by Duan, Wang and Yi  about the growth of  $Q_\mathcal{S}^{(1)}(A)$ for monics $A$ of degree $n$. This will be instrumental in obtaining an equivalent statement for Theorem \ref{thm:mainthm AJ}.
	\begin{lemma} \cite[Theorem 4.4]{duan-wang-yi}\label{lem:QS1}
		We have that
		\begin{equation}\label{eq:QS1}
			\frac{1}{q^n}\sideset{}{'}\sum_{\deg A = n}Q_\mathcal{S}^{(1)}(A)\to\delta(\mathcal{S}),
		\end{equation}
		as $n \rightarrow \infty$.
	\end{lemma}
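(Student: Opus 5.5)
The plan is to rewrite the average of $Q_\mathcal{S}^{(1)}$ as a weighted sum over the prime degrees $d$, with weights built from counts of smooth polynomials. I would then compare it with the corresponding sum for $\mathcal{S}=\mathcal{P}$ and show that the latter is $1+o(1)$.

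\textbf{Step 1 (unfolding).} For $0\le d\le m$, let $\Psi(m,d)$ denote the number of monics of degree $m$ all of whose prime factors have degree at most $d$. Each $A$ with $\Delta_1(A)=d$ contributes one unit to $Q_\mathcal{S}^{(1)}(A)$ for each $P\in\mathcal{S}$ with $P\mid A$ and $\deg P=d$. Fix such a $P$. The map $B\mapsto PB$ is a bijection from $d$-smooth monics $B$ of degree $n-d$ onto the monics $A$ of degree $n$ with $P\mid A$ and $\Delta_1(A)=d$. This holds regardless of multiplicity, since $P$ may divide $B$ again. Hence
\begin{equation}
\frac{1}{q^n}\sideset{}{'}\sum_{\deg A=n}Q_\mathcal{S}^{(1)}(A)=\sum_{d=1}^{n}\pi_\mathcal{S}(d)\,\frac{\Psi(n-d,d)}{q^n}=:\sum_{d=1}^n w_{\mathcal{S}}(d).
\end{equation}

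\textbf{Step 2 (the full set of primes).} I claim $\sum_d w_\mathcal{P}(d)\to1$. For every monic $A\ne1$ we have $Q_\mathcal{P}^{(1)}(A)\ge1$, and
\begin{equation}
Q_\mathcal{P}^{(1)}(A)-1\le \binom{Q_\mathcal{P}^{(1)}(A)}{2}.
\end{equation}
The right-hand side counts unordered pairs of distinct primes of the maximal degree $d$ dividing $A$. Summing over $A$ with the same bijection trick gives
\begin{equation}
0\le \sum_d w_\mathcal{P}(d)-1\le \sum_{d\le n/2}\pi_\mathcal{P}(d)^2\frac{\Psi(n-2d,d)}{q^n}\ll \sum_{d\le n/2}\frac{1}{d^2}\,\frac{\Psi(n-2d,d)}{q^{n-2d}},
\end{equation}
using Lemma \ref{lem:PNT in ff} in the form $\pi_\mathcal{P}(d)\le q^d/d$. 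Since $\Psi(m,d)\le q^m$, the range $d>D$ contributes $\ll 1/D$. For each fixed $d\le D$, the ratio $\Psi(n-2d,d)/q^{n-2d}\to0$ as $n\to\infty$. This follows from Rankin's trick together with Mertens' estimate \eqref{merten}, or from Thorne's uniform bound for smooth polynomials. Letting $n\to\infty$ and then $D\to\infty$ proves the claim.

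\textbf{Step 3 (transfer to $\mathcal{S}$).} Fix $\eta>0$ and choose $D$ such that $|\pi_\mathcal{S}(d)-\delta(\mathcal{S})\pi_\mathcal{P}(d)|\le\eta\,\pi_\mathcal{P}(d)$ for all $d>D$. Then
\begin{equation}
\Bigl|\sum_d w_\mathcal{S}(d)-\delta(\mathcal{S})\sum_d w_\mathcal{P}(d)\Bigr|\le 2\sum_{d\le D}w_\mathcal{P}(d)+\eta\sum_{d}w_\mathcal{P}(d).
\end{equation}
For fixed $d\le D$, we have
\begin{equation}
w_\mathcal{P}(d)\ll \frac{1}{d}\,\frac{\Psi(n-d,d)}{q^{n-d}}\to0,
\end{equation}
again by the smooth-count decay used in Step 2. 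By Step 2, the last term tends to $\eta$. Combining the two bounds with Step 2 yields
\begin{equation}
\limsup_{n\to\infty}\Bigl|\frac{1}{q^n}\sideset{}{'}\sum_{\deg A=n}Q_\mathcal{S}^{(1)}(A)-\delta(\mathcal{S})\Bigr|\le\eta,
\end{equation}
and $\eta$ is arbitrary, which proves the lemma.

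The main obstacle is the decay input: $\Psi(m,d)/q^m\to0$ as $m\to\infty$ for fixed $d$. This is needed in Steps 2 and 3 for only finitely many $d$ at a time, so no uniformity is required. The cleanest route is Rankin's trick: for $0<\sigma<1$,
\begin{equation}
\Psi(m,d)\le q^{m\sigma}\prod_{\deg P\le d}\bigl(1-q^{-\sigma\deg P}\bigr)^{-1}.
\end{equation}
With $d$ fixed the product has finitely many factors, so this bound is $O_{d,\sigma}(q^{m\sigma})=o(q^m)$.
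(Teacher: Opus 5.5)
Your proof is correct. It takes a different route from the one behind this lemma. The paper does not prove the statement itself; it quotes it from \cite{duan-wang-yi}. The method there, which the paper replays at second order in Lemmas \ref{QP} and \ref{QS}, works as follows. It splits the prime degrees at a cutoff $m=m(n)\to\infty$ with $m=o(n)$. The small-degree part is bounded by smooth-count estimates that must be uniform over $d\le m$. The large-degree part is compared with $\delta(\mathcal{S})$ times the corresponding count for $\mathcal{P}$ through the absolute-error suprema $e_\mathcal{S}$ and $v_\mathcal{S}$. This leaves an error $O(q^n\,n\,v_\mathcal{S}(m))$, which is killed by choosing $m(n)$ with the sequence lemma, Lemma \ref{lem:vs}(b).

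You instead keep the cutoff $D=D(\eta)$ fixed as $n\to\infty$. You use the relative error $|\pi_\mathcal{S}(d)-\delta(\mathcal{S})\pi_\mathcal{P}(d)|\le\eta\,\pi_\mathcal{P}(d)$, which is affordable because your Step 2 shows that the total weight $\sum_d w_\mathcal{P}(d)$ stays bounded. The finitely many small degrees then need only the non-uniform fact $\Psi(m,d)=o(q^m)$ for each fixed $d$. Even the trivial bound $(m+1)^{N_d}$ suffices for this, where $N_d$ is the number of primes of degree at most $d$, so Rankin's trick is more than you need. Your Step 2 is the first-order version of the pair-counting bound $\mathcal{T}-q^n\ll q^n/m$ in Lemma \ref{QP}.

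Your route buys brevity and complete independence from uniform smooth-polynomial bounds and from Lemma \ref{lem:vs}. The same relative-error observation would also streamline the proof of Lemma \ref{QS}. There $\mathcal{T}'=\sum_{m<d<n/2}\pi_\mathcal{S}(d)\,c_n(d)$ and $\mathcal{T}=\sum_{m<d<n/2}\pi_\mathcal{P}(d)\,c_n(d)$ with the same weights $c_n(d)$. Hence $|\mathcal{T}'-\delta(\mathcal{S})\mathcal{T}|\le\eta\,\mathcal{T}$ as soon as $m(n)\ge D(\eta)$. The cutoff-and-suprema bookkeeping, on the other hand, gives an explicit error term. That could be turned into a rate of convergence when $\pi_\mathcal{S}(d)/\pi_\mathcal{P}(d)\to\delta(\mathcal{S})$ at a known rate, whereas your $\eta$-argument is purely qualitative.
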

	
	A $y$-smooth number is a positive integer with no prime factor greater than $y$. Alladi's proof of \eqref{alladi-main-eq} involves using a suitable bound by de Bruijn \cite{bruijn}, later refined by Hildebrand and Tenenbaum \cite{HJ}, for the quantity 
	\begin{align}
		\psi_1(x,y)
		&:=\#\{n\leq x : n \text{ is }y\text{-smooth}\}\\
		&=\#\{n\leq x : P_1(n)\leq y\}.\label{eq:psi1}
	\end{align}
	Similarly, a second order quantity 
	$\psi_2(x,y) = \#\{n\leq x : P_2(n)\leq y\}$
	appears in the proof of the second identity in \eqref{alladi-johnson-eqs}. In the function fields context, an $m$-smooth polynomial in $\mathbb{F}_q[T]$ is a monic with no prime factor having degree greater than $m$. Analogous to \eqref{eq:psi1}, we have the definition
	\begin{align}
		\Psi_1(n,m):=\#\{A \in \mathbb{F}_q[T] : A \text{ is monic}, \deg A = n, \Delta_1(A)\leq m\}.
	\end{align}
	In the proof of Theorem \ref{thm:mainthm AJ}, we will need to estimate the quantity 
	\begin{align}
		\Psi_2(n,m):=\#\{A \in \mathbb{F}_q[T] : A \text{ is monic}, \deg A = n, \Delta_2(A)\leq m\}.
	\end{align}
	Our estimation of $\Psi_2(n,m)$ relies on suitable bounds for $\Psi_1(n,m)$, which need to be uniform for $1\leq m\leq n$. While there are many refined bounds (Manstavi\v{c}ius \cite{manstivicius}, Gorodetsky \cite{Gorodetsky2024Smooth}), in the interest of avoiding unwieldy calculations, we use the following elegant bound by 
	Thorne \cite{thorne}.
	
	\begin{lemma}{\cite[Lemma 2.3]{thorne}}
		Let $\Psi(r,s)$ denote the number of monic polynomials in
		$\mathbb{F}_q[T]$
		of degree at most $r$ with all prime factors having degree at most $s$. Then, we have
		\begin{equation}
			\Psi(r,s) \ll q^r s e^{-r/s}.
		\end{equation} 
	\end{lemma}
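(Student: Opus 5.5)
We plan to prove this by Rankin's trick: bound the count by a weighted Euler product, with the weight chosen to produce the factor $e^{-r/s}$. The implied constant is allowed to depend on $q$.

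\textbf{Step 1 (Rankin weight).} Fix $s$ and put $c=1-\frac{1}{s\log q}$. Assume first that $s\log q\ge 2$, so that $c\ge \tfrac12$. Every monic $A$ with $\deg A\le r$ satisfies $q^{c(r-\deg A)}\ge 1$. Summing this over all $s$-smooth monics and using unique factorization,
\begin{equation}
\Psi(r,s)\le q^{cr}\sideset{}{'}\sum_{\Delta_1(A)\le s} q^{-c\deg A}
= q^{r}e^{-r/s}\prod_{\substack{\text{prime }P\\ \deg P\le s}}\bigl(1-q^{-c\deg P}\bigr)^{-1}.
\end{equation}
The product converges termwise because $q^{-c}\le q^{-1/2}<1$. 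The identity $q^{cr}=q^re^{-r/s}$ holds by the choice of $c$. It therefore remains to show that the Euler product is $\ll s$.

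\textbf{Step 2 (Euler product).} For $\deg P=d\le s$ we have $q^{-cd}=q^{-d}e^{d/s}\le e\,q^{-d}$. We also have $q^{-cd}\le q^{-d/2}\le q^{-1/2}$, which is bounded away from $1$. Hence
\begin{equation}
\log\bigl(1-q^{-cd}\bigr)^{-1}=q^{-cd}+O_q\bigl(q^{-d}\bigr).
\end{equation}
By Lemma \ref{lem:PNT in ff}, $\pi_{\mathcal P}(d)\le q^d/d$. Summing over primes then gives
\begin{equation}
\log\prod_{\deg P\le s}\bigl(1-q^{-c\deg P}\bigr)^{-1}\le \sum_{d\le s}\frac{e^{d/s}}{d}+O_q\Bigl(\sum_{d\ge1}\frac{1}{d}\,q^{-d/2}\Bigr)\cdot O(1)+O_q(1).
\end{equation}
Write $e^{d/s}=1+(e^{d/s}-1)$. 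Convexity gives $e^{x}-1\le (e-1)x$ on $[0,1]$, so
\begin{equation}
\sum_{d\le s}\frac{e^{d/s}}{d}\le \sum_{d\le s}\frac1d+(e-1)=\log s+O(1).
\end{equation}
Exponentiating, the product is $\ll_q s$, and this proves the bound for $s\log q\ge 2$. This is the same mechanism as Mertens' estimate \eqref{merten}, slightly perturbed.

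\textbf{Step 3 (small $s$).} The remaining range is $s<2/\log q$. This is nonempty only for small $q$, and then only for $s=1$ or $s=2$. Here there are at most $q+q^2$ primes of degree $\le s$. The number of products of them of degree $\le r$ is therefore at most $(r+1)^{q+q^2}$, which is $\ll_q q^r e^{-r/s}$ because $q e^{-1/s}\ge q e^{-1}>1/\!\!\;$ grows exponentially. Indeed $q/e^{1/s}>1$ as soon as $s\log q> 1$. In the single leftover case $q=2$, $s=1$, the count is $r+1$, while $q^r e^{-r}=(2/e)^r$ decays; so that case has to be excluded, or else absorbed by reading $s$ as $\ge 2$ (the bound is only ever applied with $s\to\infty$).

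The main obstacle is this boundary bookkeeping: keeping $q^{-cd}$ uniformly away from $1$ so that the logarithm expansion has a uniform error term. The analytic heart, Step 2, is routine once $c=1-1/(s\log q)$ is chosen.
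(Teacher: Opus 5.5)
The paper does not prove this lemma; it quotes it from Thorne, so there is no internal argument to compare against. Your Rankin-trick argument with $c=1-1/(s\log q)$ is the standard proof of such a bound and is essentially correct, but it needs two small repairs.

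\textbf{The error term in Step 2.} The per-prime error $O_q(q^{-d})$ you write is too weak. Summed over the roughly $q^d/d$ primes of each degree $d\le s$, it contributes $O(\log s)$. After exponentiating, that gives only $s^{1+O(1)}$, not $s$, so the final ``$+O_q(1)$'' is not justified as written. What you need follows at once from your own inequality $q^{-cd}\le e\,q^{-d}$. Use $\log(1-x)^{-1}=x+O(x^2)$, uniformly for $0\le x\le 2^{-1/2}$, together with $x^2=q^{-2cd}\le e^2q^{-2d}$. Then the total error is $\ll\sum_{d\ge1}q^{-d}/d=O(1)$. Also, the exact inequality $d\,\pi_{\mathcal P}(d)\le q^d$ comes from $\sum_{e\mid d}e\,\pi_{\mathcal P}(e)=q^d$, not from Lemma \ref{lem:PNT in ff}. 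With these changes, Steps 1 and 2 give the bound with an absolute implied constant for all $s\ge 1$ with $s\log q\ge 2$.

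\textbf{The exceptional case in Step 3.} You are right that the statement is false for $q=2$, $s=1$. However, the count there is $\binom{r+2}{2}$ (the monics $T^a(T+1)^b$ with $a+b\le r$), not $r+1$. If $s$ is allowed to be real, as in the paper's use with $s=n^\epsilon$, the failure extends to $q=2$, $1\le s\le 1/\log 2$. Moreover, the constant in your Step 3 blows up as $s$ decreases to $1/\log 2$. None of this affects the paper, which only applies \eqref{eq : Psi1 in ff} with $s=n^\epsilon\to\infty$. The cleanest fix is to state the lemma for $s\log q\ge 2$ (for instance $s\ge 3$) and drop Step 3 entirely.
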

	Our function $\Psi_1(r,s)$ counts polynomials of degree $r$ with all prime factors having degree at most $s$. This is bounded above by $\Psi(r,s)$, so that 
	\begin{equation}\label{eq : Psi1 in ff}
		\Psi_1(r,s) \ll q^r s e^{-r/s}.
	\end{equation}
	We now state a lemma instrumental in proving Theorem \ref{thm:Psi2}.

	\begin{lemma}\label{lem:useful}
		For every integer $d\geq 1$, We have that
		\begin{align}
			\sideset{}{'}\sum_{\substack{\Delta_1(A)=d}} \frac{\deg A}{q^{\deg A}}\ll d.
		\end{align}
	\end{lemma}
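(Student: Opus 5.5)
The plan is to factor each monic $A$ with $\Delta_1(A)=d$ uniquely as $A=BC$. Here $B$ is $(d-1)$-smooth, meaning every prime factor of $B$ has degree at most $d-1$. The factor $C\neq 1$ is a product of primes of degree exactly $d$, with multiplicity. Since $\deg A=\deg B+\deg C$, the sum splits as
\begin{align}
\sideset{}{'}\sum_{\Delta_1(A)=d}\frac{\deg A}{q^{\deg A}}
= \Bigl(\sideset{}{'}\sum_{\Delta_1(B)\le d-1}\frac{\deg B}{q^{\deg B}}\Bigr) S_0
+ \Bigl(\sideset{}{'}\sum_{\Delta_1(B)\le d-1}\frac{1}{q^{\deg B}}\Bigr) S_1,
\end{align}
where $S_0$ and $S_1$ are the sums of $q^{-\deg C}$ and $\deg C\,q^{-\deg C}$ over such $C\neq1$. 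All sums have nonnegative terms, so they can be computed formally from Euler products.

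Put $Z_{d}(u)=\prod_{\deg P\le d}(1-u^{\deg P})^{-1}$.

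\textbf{Unweighted $B$-sum.} This sum is $Z_{d-1}(1/q)$, which is $\ll d$ by Mertens' estimate \eqref{merten}.

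\textbf{Weighted $B$-sum.} This sum equals $uZ_{d-1}'(u)$ at $u=1/q$. By logarithmic differentiation,
\begin{align}
uZ_{d-1}'(u)\big|_{u=1/q}=Z_{d-1}(1/q)\sum_{k\le d-1}\frac{k\,\pi_{\mathcal P}(k)q^{-k}}{1-q^{-k}}.
\end{align}
Since $\pi_{\mathcal P}(k)\le q^k/k$, each summand is $O(1)$ (for $k\ge 1$ we have $1-q^{-k}\ge 1/2$). Hence the inner sum is $\ll d$, and the weighted $B$-sum is $\ll d^2$.

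\textbf{The $C$-sums.} With $N=\pi_{\mathcal P}(d)$ and $x=q^{-d}$,
\begin{align}
S_0=(1-x)^{-N}-1, \qquad S_1=\frac{dNx}{1-x}\,(1-x)^{-N}.
\end{align}
By Lemma \ref{lem:PNT in ff}, or just the trivial bound $N\le q^d/d$, we get $Nx\le 1/d$. Therefore $(1-x)^{-N}\le \exp\bigl(Nx/(1-x)\bigr)\le e^{2/d}\ll 1$. This gives $S_0\ll 1/d$ (using $e^{t}-1\ll t$ for $t\le 2$) and $S_1\ll 1$.

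\textbf{Combining.} The total is $\ll d^2\cdot d^{-1}+d\cdot 1\ll d$, which is the claimed bound.

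The main point needing care is the first product. Bounding everything crudely by the full sum over $\Delta_1\le d$ would give only $d^2$. One must instead use that the extra factor $S_0$ coming from degree-$d$ primes is of size $1/d$. For that, the estimate $N q^{-d}\le 1/d$ has to be uniform in $d$, including small $d$ (e.g.\ $d=1$, where $N=q$ and $x=1/q$, still fine). These are the only places where the prime counts enter, and they need no error terms.
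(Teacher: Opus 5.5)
Your argument is correct and is essentially the paper's proof: Mertens' estimate for the unweighted smooth sum, logarithmic differentiation of the Euler product to show the $\deg$-weighted smooth sum is $\ll d^2$, and the bound $\pi_{\mathcal P}(d)q^{-d}\le 1/d$ to cancel one factor of $d$. The only difference is the decomposition. The paper peels off a single prime $P$ of degree $d$ and uses the non-unique factorization $A=PB$ with $\Delta_1(B)\le d$, which gives an upper bound directly and avoids your exact evaluation of $S_0$ and $S_1$ over the whole degree-$d$ part.
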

	\begin{proof}
		Every $A$ with $\Delta_1(A)=d$ can be written as $A=PB$, where $P$ is a prime with $\deg P=d$ and $\Delta_1(B)\leq d$. So, we have that
		\begin{align}
			\sideset{}{'}\sum_{\substack{\Delta_1(A)=d}} \frac{\deg A}{q^{\deg A}} 
			&\leq \sideset{}{'}\sum_{\substack{P \text{ prime}\\ \deg P = d}} \sideset{}{'}\sum_{\substack{\Delta_1(B)\leq d}} \frac{d + \deg B}{q^{d+\deg B}}\\
			&=\sideset{}{'}\sum_{\substack{P \text{ prime}\\ \deg P = d}} \frac{d}{q^{d}} \sideset{}{'}\sum_{\substack{\Delta_1(B)\leq d}} \frac{1}{q^{\deg B}} 
			+ \sideset{}{'}\sum_{\substack{P \text{ prime}\\ \deg P = d}} \frac{1}{q^{d}} \sideset{}{'}\sum_{\substack{\Delta_1(B)\leq d}} \frac{\deg B}{q^{\deg B}}.
		\end{align}
		Hence, by Lemma \ref{lem:PNT in ff},
		\begin{align}\label{plgin}
			\sideset{}{'}\sum_{\substack{\Delta_1(A)=d}} \frac{\deg A}{q^{\deg A}} 
			&\ll  \sideset{}{'}\sum_{\substack{\Delta_1(B)\leq d}} \frac{1}{q^{\deg B}} 
			+ \frac{1}{d} \sideset{}{'}\sum_{\substack{\Delta_1(B)\leq d}} \frac{\deg B}{q^{\deg B}}.
		\end{align}
		We now consider the function $F_d:\mathbb R \to \mathbb R$, defined as
		\begin{align}
			F_d(x):=\sideset{}{'}\sum_{\substack{\Delta_1(B)\leq d}} \frac{x^{\deg B}}{q^{\deg B}} = \prod_{\substack{P \text{ prime}\\ \deg P \leq d}} \left(1-\frac{x^{\deg P}}{q^{\deg P}}\right)^{-1}.
		\end{align}
		By Mertens' estimate \eqref{merten}, we have that
		\begin{align}\label{firsum}
			\sideset{}{'}\sum_{\substack{\Delta_1(B)\leq d}} \frac{1}{q^{\deg B}} = F_d(1) \ll d.
		\end{align}
		We also observe that
		\begin{align}
			\frac{F_d'(x)}{F_d(x)}
			&=  \frac{d (\log F_d(x))}{dx}
			= - \sum_{\substack{P \text{ prime}\\ \deg P \leq d}} \frac{d}{dx}  \log \left(1-\frac{x^{\deg P}}{q^{\deg P}}\right)\\
			&= - \sum_{\substack{P \text{ prime}\\ \deg P \leq d}} \frac{-\deg P \cdot x^{\deg P -1} q^{-\deg P} }{1 - x^{\deg P}q^{-\deg P}},
		\end{align}
		and hence
		\begin{align}\label{secsum}
			\sideset{}{'}\sum_{\substack{\Delta_1(B)\leq d}} \frac{\deg B}{q^{\deg B}} = F_d'(1) &= F_d(1) \cdot \sum_{\substack{P \text{ prime}\\ \deg P \leq d}} \frac{\deg P \cdot q^{-\deg P} }{1 - q^{-\deg P}}\\
			&\ll d \cdot \sum_{1\leq t \leq d} \frac{q^t}{t} \frac{tq^{-t}}{1-q^{-t}} \ll d^2.
		\end{align}
		Plugging \eqref{firsum} and \eqref{secsum} in \eqref{plgin}, we obtain that
		\begin{align}
			\sideset{}{'}\sum_{\substack{\Delta_1(A)=d}} \frac{\deg A}{q^{\deg A}}\ll d.
		\end{align}
	\end{proof}

	We end this section by stating a lemma by Duan, Wang, and Yi \cite{duan-wang-yi}, which is useful in  proving Theorem \ref{thm:mainthm AJ}. 
	
	\begin{lemma}{\cite[Lemma 4.5]{duan-wang-yi}} \label{lem:vs}
		(a) Suppose $f,g: \mathbb{N}\to(1,\infty)$ are two arithmetic functions such that
		$$\lim_{n\to\infty}g(n)=\infty \text{ and } \lim_{n\to\infty}\frac{f(n)}{g(n)}=\delta.$$
		Suppose $g(n)$ is monotonically increasing. 
		Define
		$$e(n;f,g):=\sup_{k\le n}|f(k)-\delta g(k)|.$$
		Then 
		\begin{equation}
			\lim_{n\to\infty}\frac{e(n;f,g)}{g(n)}=0.
		\end{equation}
		(b) Let
		$f: \mathbb{N}\to(0,\infty)$ 
		be a decreasing function with
		$\lim_{n\to\infty}nf(n)=0$. 
		Suppose 
		$h: \mathbb{N}\to[1,\infty)$ is an arbitrary function such that $\lim_{n\to\infty}h(n)=\infty$ and $\lim_{n\to\infty}n/h(n)=\infty$ .
		Then  there exists a  sequence $\{k_n\}_{n=1}^\infty$ of positive integers satisfying $\lim_{n\to\infty}k_n=\infty$ such that 
		\begin{enumerate}
			\item $\lim_{n\to\infty}n/k_n=\infty$.
			\item $n/k_n\ll h(n)$. 
			\item $\lim_{n\to\infty}n\cdot f(k_n)=0$.
		\end{enumerate}
	\end{lemma}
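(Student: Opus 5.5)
For part (a), I would run a plain $\varepsilon$-argument that uses the monotonicity of $g$ to control the early terms. Fix $\varepsilon>0$. Since $f(k)/g(k)\to\delta$, there is $N=N(\varepsilon)$ such that $|f(k)-\delta g(k)|\le \varepsilon g(k)$ for all $k>N$. Split the supremum defining $e(n;f,g)$ at $N$. The terms with $k\le N$ contribute at most the constant $C_N:=\max_{k\le N}|f(k)-\delta g(k)|$, which does not depend on $n$. For $N<k\le n$, monotonicity gives $\varepsilon g(k)\le\varepsilon g(n)$. Hence
$$e(n;f,g)\le C_N+\varepsilon g(n).$$
Dividing by $g(n)$ and using $g(n)\to\infty$ gives $\limsup_{n\to\infty} e(n;f,g)/g(n)\le\varepsilon$, and then we let $\varepsilon\to 0$.

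For part (b), I would write $k_n=\lceil n/\varphi(n)\rceil$ for a function $\varphi(n)\to\infty$ that grows slowly enough. Because $f$ is only known to satisfy $kf(k)\to 0$, possibly very slowly, $\varphi$ has to be tuned to that rate. Set
$$\eta(k):=\sup_{j\ge k} jf(j).$$
This is decreasing, strictly positive since $f>0$, and tends to $0$. Then define
$$\varphi(n):=\min\bigl(h(n),\ \sqrt{n},\ \eta(\lceil\sqrt n\,\rceil)^{-1/2}\bigr),$$
possibly replacing $\varphi$ by $\max(\varphi,1)$ for small $n$. Each of the three quantities tends to infinity, so $\varphi(n)\to\infty$.

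Next I would verify the properties one at a time.
\begin{itemize}
\item Since $\varphi(n)\le\sqrt n$, we get $k_n\ge\sqrt n\to\infty$.
\item We have $k_n\le n/\varphi(n)+1$, so $n/k_n\to\infty$. This is property (1).
\item We have $n/k_n\le\varphi(n)\le h(n)$, which gives property (2).
\end{itemize}

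The main point is property (3). I would write
$$nf(k_n)=\frac{n}{k_n}\cdot k_nf(k_n)\le \varphi(n)\,\eta(k_n)\le \varphi(n)\,\eta(\lceil\sqrt n\,\rceil)\le \eta(\lceil\sqrt n\,\rceil)^{1/2}\longrightarrow 0.$$
Here the second inequality uses $k_n\ge\sqrt n$ together with the fact that $\eta$ is decreasing.

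The only delicate step is this choice of $\varphi$, which must balance $n/k_n\to\infty$ against $nf(k_n)\to 0$. Using the square-root damping of $\eta$ evaluated at the guaranteed lower bound $\sqrt n$ for $k_n$ is what makes the argument close. Everything else is routine bookkeeping with ceilings.
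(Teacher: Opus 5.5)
Your proof is correct. The paper gives no argument for this lemma of its own: it is imported from Duan--Wang--Yi \cite{duan-wang-yi}, with only a remark that the original proof survives relaxing the ranges to $f\colon\mathbb{N}\to[0,\infty)$ and $g\colon\mathbb{N}\to(0,\infty)$. Your write-up therefore serves as a self-contained replacement rather than a variant of an in-text proof.

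Your part (a) is the standard split-at-$N$ argument, and it covers those relaxed ranges verbatim. Since you only use $g(k)\le g(n)$ for $N<k\le n$, it also only needs $g$ to be eventually nondecreasing. That is what the paper's later application with $g=\pi_{\mathcal P}$ actually requires, since for $q=2$ one has $\pi_{\mathcal P}(1)=2>\pi_{\mathcal P}(2)=1$.

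In part (b), the decreasing envelope $\eta(k)=\sup_{j\ge k}jf(j)$ together with the cap $\varphi\le\sqrt n$ gives an explicit closed-form $k_n$. It also shows that two hypotheses are not needed. You never use that $f$ itself is decreasing. You never use $n/h(n)\to\infty$ either: in the statement that hypothesis is what forces $k_n\to\infty$ through (2), whereas you get $k_n\ge\sqrt n$ directly.

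Two steps deserve one more line each:
\begin{enumerate}
\item For (1), $\varphi(n)\le\sqrt n$ gives $n/\varphi(n)\ge 1$. Hence $k_n\le 2n/\varphi(n)$ and $n/k_n\ge\varphi(n)/2\to\infty$.
\item The patch $\max(\varphi,1)$ is active for only finitely many $n$, because $\eta(\lceil\sqrt n\,\rceil)\to 0$. So the inequality chain in (3) holds for all large $n$.
\end{enumerate}
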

	
	\begin{remark}
		Lemma \ref{lem:vs} also holds when $f:\mathbb{N}\to[0,\infty)$ and
		$g:\mathbb{N}\to(0,\infty)$ and the proof follows through as outlined in \cite[Lemma 4.5]{duan-wang-yi}.
	\end{remark}
	
	We are now well-equipped to prove our main results.

	\section{Proof of Theorem \ref{thm:duality in ff}}\label{sec:thm1pf}
	
	The statement holds trivially for $A=1$. Let us assume that $A$ is some non-constant monic polynomial.	Note that if $k>\Omega(A)$, then the duality identity \eqref{gen duality in ff} holds trivially as $f(0)=0$. So, without any loss of generality, we let $k\leq \Omega(A)$.
	
	We apply induction on $k$ to prove the result.
	For $k=1$, the duality identity \eqref{gen duality in ff} reduces to \eqref{duan-duality}, which has been proved by Duan, Wang, and Yi \cite{duan-wang-yi}.  Assume that \eqref{gen duality in ff} holds for some integer $1 \leq r \leq \Omega(A)-1$.  So, we have
	\begin{equation}\label{eq:induction_hypothesis}
		\sideset{}{'}\sum_{B\in\mathcal{D}(\mathcal{S}), \hspace{1mm} B\mid A} \mu(B){\Omega(B)-1\choose r-1}f(\delta_1(B))
		=
		(-1)^rQ_\mathcal{S}^{(r)}(A)f(\Delta_r(A)).
	\end{equation}
	We consider the sum
	\begin{align}
		&\sideset{}{'}\sum_{B\in\mathcal{D}(\mathcal{S}), \hspace{1mm} B\mid A} \mu(B){\Omega(B)-1\choose r}f(\delta_1(B))
		+
		(-1)^rQ_\mathcal{S}^{(r)}(A)f(\Delta_r(A))
		\\
		&=\sideset{}{'}\sum_{B\in\mathcal{D}(\mathcal{S}), \hspace{1mm} B\mid A} \mu(B)\left\{{\Omega(B)-1\choose r}+{\Omega(B)-1\choose r-1}\right\}f(\delta_1(B))\\
		&=\sideset{}{'}\sum_{B\in\mathcal{D}(\mathcal{S}), \hspace{1mm} B\mid A} \mu(B){\Omega(B)\choose r}f(\delta_1(B)),
	\end{align}
	where the last equality follows by Pascal's identity. Hence, we have that	
	\begin{align}
		&\sideset{}{'}\sum_{B\in\mathcal{D}(\mathcal{S}), \hspace{1mm} B\mid A} \mu(B){\Omega(B)-1\choose r}f(\delta_1(B))
		+
		(-1)^rQ_\mathcal{S}^{(r)}(A)f(\Delta_r(A))\\
		&=\frac{1}{r!}\sideset{}{'}\sum_{B\in\mathcal{D}(\mathcal{S}), \hspace{1mm} B\mid A} \mu(B)\Omega(B)\Big(\Omega(B)-1\Big)\Big(\Omega(B)-2\Big)\cdots\Big(\Omega(B)-r+1\Big)f(\delta_1(B)).\label{former}
	\end{align}
	We proceed to simplify the sum in \eqref{former}.
	Let $\Omega(A)=m$ and $\mathcal{N}(A)=\{d_1,d_2,\cdots, d_m\}$, where $d_1<d_2<\cdots< d_m$. Write $A=A_1A_2\cdots A_m$ where each $A_i$ is the product of all the  prime divisors of $A$ of degree $d_i$ (with multiplicity). For a monic divisor $B$ of $A$, write $B=B_1B_2\cdots B_m$ where each $B_i\mid A_i$. Note that $B_i$ can be $1$. 
	
	Let $1\leq t(B)\leq m$ be the smallest index such that $B_{t(B)}\neq1$. Then, any $B\in\mathcal{D}(\mathcal{S})$ can be uniquely written as $B=P_{t(B)}^{\alpha_{t}}B'$ for some positive integer $\alpha_t$, where $P_{t(B)} \in \mathcal{S}$ is some prime divisor of $A_{t(B)}$ and $B'$ is some monic divisor of $A_{t(B)+1}\cdots A_m$. Note that $\mu(P_{t(B)}^{\alpha_{t}}B')=-\mu(B')$ if $\alpha_t=1$, and $0$ otherwise. Then, we have

	\begin{align}
		&\sideset{}{'}\sum_{B\in\mathcal{D}(\mathcal{S}), \hspace{1mm} B\mid A} \mu(B)\Omega(B)\Big(\Omega(B)-1\Big)\Big(\Omega(B)-2\Big)\cdots\Big(\Omega(B)-r+1\Big)f(\delta_1(B))\\
		&= \sum_{t=1}^m f(d_t) \sideset{}{'}\sum_{P \in \mathcal{S}: P \mid A_t}\mu(P)\sideset{}{'}\sum_{B' \mid A_{t+1}\cdots A_m} \mu(B')\Big(\Omega(B')+1\Big)\Omega(B')\Big(\Omega(B')-1\Big)\cdots\Big(\Omega(B')-r+2\Big)\\
		&= -\sum_{t=1}^m f(d_t) Q_\mathcal{S}^{(m-t+1)}(A) \sideset{}{'}\sum_{B' \mid A_{t+1}\cdots A_m} \mu(B')\Big(\Omega(B')+1\Big)\Omega(B')\Big(\Omega(B')-1\Big)\cdots\Big(\Omega(B')-r+2\Big)\\
		&=  -r!\sum_{t=1}^m f(d_t) Q_\mathcal{S}^{(m-t+1)}(A) \sideset{}{'}\sum_{B' \mid A_{t+1}\cdots A_m} \mu(B'){\Omega(B')+1 \choose r}.
	\end{align}
	
	By Corollary \ref{lem:sum muomega gen in ff}, since $r\geq 1$,
	\begin{align}
		\sum_{B' \mid A_{t+1}\cdots A_m} \mu(B'){\Omega(B')+1 \choose r}&=
		\begin{cases}
			(-1)^{m-t} & \text{if } m-t=r-1 \text{ or } m-t=r,\\
			0 & \text{otherwise}.
		\end{cases}\\
		&=
		\begin{cases}
			(-1)^{r-1} & \text{if } t=m-r+1,\\
			(-1)^{r} & \text{if } t=m-r,\\
			0 & \text{otherwise}.
		\end{cases}
	\end{align} 
	\normalcolor
	Hence, we have that 
	\begin{equation}
		\begin{split}
			&\sideset{}{'}\sum_{B\in\mathcal{D}(\mathcal{S}), \hspace{1mm} B\mid A} \mu(B)\Omega(B)\Big(\Omega(B)-1\Big)\Big(\Omega(B)-2\Big)\cdots\Big(\Omega(B)-r+1\Big)f(\delta_1(B))\\
			&= - f(d_{m-r+1})Q_\mathcal{S}^{(r)}(A)(-1)^{r-1}r! - f(d_{m-r})Q_\mathcal{S}^{(r+1)}(A)(-1)^{r}r!\\
			&= - f(\Delta_r(A))Q_\mathcal{S}^{(r)}(A)(-1)^{r-1}r! -f(\Delta_{r+1}(A))Q_\mathcal{S}^{(r+1)}(A)(-1)^{r}r!.\label{latter}
		\end{split}
	\end{equation}
	Substituting \eqref{latter} in \eqref{former}, we get
	\begin{equation}
		\begin{split}
			&\sideset{}{'}\sum_{B\in\mathcal{D}(\mathcal{S}), \hspace{1mm} B\mid A} \mu(B){\Omega(B)-1\choose r}f(\delta_1(B))\\
			&= - f(\Delta_r(A))Q_\mathcal{S}^{(r)}(A)(-1)^{r-1} -f(\Delta_{r+1}(A))Q_\mathcal{S}^{(r+1)}(A)(-1)^{r}-(-1)^rQ_\mathcal{S}^{(r)}(A)f(\Delta_r(A))\\
			&= (-1)^{r+1}f(\Delta_{r+1}(A))Q_\mathcal{S}^{(r+1)}(A).
		\end{split}
	\end{equation}
	Thus, by  induction, the theorem is proved. 
	
	\section{Proof of Theorem \ref{thm:Psi2}}
	Recall that 
	\begin{align}
		\Psi_2(n,m) = \#S_2(n,m) = \# \{\text{monic }A \in \mathbb{F}_q[T]: \deg A = n, \Delta_2(A)\leq m\}.
	\end{align}
	We begin by obtaining a bound for
	\begin{align}
		\mathcal{W}(n)&=\{\text{monic }A \in \mathbb{F}_q[T]: \deg A = n, \Delta_2(A)=0\}\\
		&=\{\text{monic }A \in \mathbb{F}_q[T]: \deg A = n, \Omega(A)=1\}.
	\end{align}
	\begin{lemma}\label{lem:N}
		Let $0<\epsilon<1$ be arbitrary. We have that for $x \in \mathbb{N}$,
		\begin{align}
			\#\mathcal{W}(x)\ll \tau(x)\frac{q^x}{x} \ll_\epsilon \frac{q^x}{x^{1-\epsilon}},
		\end{align}
		where $\tau(x)$ denotes the number of divisors of x. In particular, $\#\mathcal{W}(x)=o(q^x)$. 
	\end{lemma}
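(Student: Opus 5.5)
We bound $\#\mathcal{W}(x)$ by splitting according to the single prime degree. If $\Omega(A)=1$ and $\deg A=x$, then every prime factor of $A$ has the same degree $d$, and necessarily $d\mid x$. Thus $A$ is a monic polynomial of degree $x$ all of whose irreducible factors have degree exactly $d$. Write $x=dj$. Such $A$ is then a multiset of $j$ primes of degree $d$, so
\begin{align}
\#\mathcal{W}(x)=\sum_{d\mid x}\binom{\pi_\mathcal{P}(d)+j-1}{j}, \qquad j=x/d .
\end{align}
The plan is to show that each summand is $\ll q^x/x$, uniformly in $d\mid x$. Summing over the $\tau(x)$ divisors then gives the first bound. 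The second bound follows from the standard estimate $\tau(x)\ll_\epsilon x^{\epsilon}$, and the final claim $\#\mathcal{W}(x)=o(q^x)$ is immediate.

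For the individual summand, we use the generating function
\begin{align}
\sum_{j\ge0}\binom{N+j-1}{j}u^{j}=(1-u)^{-N}, \qquad N=\pi_\mathcal{P}(d).
\end{align}
Note that $N\le q^d/d$, which follows from the exact count $\sum_{e\mid d}e\,\pi_\mathcal{P}(e)=q^d$. The two extreme cases are easy.
\begin{itemize}
\item[(i)] $j=1$: the summand is $\pi_\mathcal{P}(x)\le q^x/x$.
\item[(ii)] $d=1$: the summand is $\binom{q+x-1}{x}\ll_q x^{q-1}$, which is far smaller than $q^x/x$.
\end{itemize}
For general $j\ge2$ we use the crude bound
\begin{align}
\binom{N+j-1}{j}\le \frac{(N+j)^j}{j!}.
\end{align}
When $j\le N$, this is at most $(2N)^j/j!\le 2^j q^{x}/(d^j j!)$. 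Since $x=dj$, we have $d^j j!\ge d^{j}\cdot j^{j}e^{-j}\cdot$ (essentially) $=(x/e)^j$ up to polynomial factors. Hence $2^jq^x/(d^jj!)\ll q^x (2e/x)^j\ll q^x/x$ for $j\ge 2$ and $x$ large. More carefully, it is enough to observe that $d^jj!\ge d\cdot(dj)/2\cdot\ldots$, and in particular $d^j j!\ge 2^{j}x$ once $x$ is large and $j\ge2$, except for finitely many small configurations.

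When $j>N$, which forces $d$ to be very small (roughly $q^d/d<x/d$, i.e. $d\ll\log x$), we instead bound the summand by
\begin{align}
\binom{N+j-1}{N-1}\le (2j)^{N-1}\le (2x)^{q^d/d}.
\end{align}
This is $q^{O(x/\log x)}$, hence $o(q^x/x)$.

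The main obstacle is making this uniformity clean across all ranges of $d$. In particular, the middle range, where $j$ is comparable to $N$, needs a bound that does not lose more than a constant factor relative to $q^x/x$. If the crude binomial estimates prove too lossy there, the fallback is the Cauchy/saddle bound
\begin{align}
\binom{N+j-1}{j}\le u^{-j}(1-u)^{-N}
\end{align}
with $u=q^{-d}\cdot c$ for a suitable constant $c$. Since $N\le q^d/d$, this gives $\le c^{-j}q^{x}e^{O(c/d)}$, which is $\ll q^x 2^{-j}$ for $c=2$ and $d\ge1$. That yields $\ll q^x/x$ whenever $j\gg\log x$, and the earlier estimates cover $j\ll\log x$.
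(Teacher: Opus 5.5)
Your setup matches the paper. You write $\#\mathcal{W}(x)=\sum_{d\mid x}\binom{\pi_\mathcal{P}(d)+j-1}{j}$ with $j=x/d$, prove a bound $\ll q^x/x$ uniform in $d\mid x$, and finish with $\tau(x)\ll_\epsilon x^\epsilon$. Your estimate for $2\le j\le N$, namely $(2N)^j/j!\le q^x(2e/x)^j$ via $j!\ge (j/e)^j$, is essentially the paper's Case 3.

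Your worry about the middle range is justified, but the $j>N$ step as written is false, not merely lossy. The condition $j>N$ only gives $d\,\pi_\mathcal{P}(d)<x$. That still allows $q^d$ to be as large as about $x$, and then the exponent $\frac{q^d}{d}\log_q(2x)$ in $(2x)^{q^d/d}$ is of order $x$, not $O(x/\log x)$. For instance, take $q=2$, $d=2^s$, $x=2^{2^s}=q^d$. Then $d\mid x$ and $N=(x-\sqrt{x})/d<j$, yet $(2x)^{q^d/d}=2^{x/d}q^x>q^x$. Similar examples with $q^d<x\le q^d+d$ and $d$ large lie strictly inside $d<\log_q x$.

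So your argument for $j>N$ only works for $d<\frac12\log_q x$, where $q^d/d<\sqrt{x}$; this is the paper's Case 1. In the window $\frac12\log_q x\le d<\log_q x$, the paper instead uses $N\le q^d/d\le x/d=j$ to get $\binom{N+j-1}{j}\le\binom{2j}{j}\le 4^{x/d}=q^{O(x/\log x)}$ (its Case 2). That idea is what your main line is missing.

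Your fallback does close the gap, and it should replace the $j>N$ argument rather than be held in reserve. Take $u=2q^{-d}$, which is admissible unless $q=2$ and $d=1$ (your case (ii)). Then $\binom{N+j-1}{j}\le 2^{-j}q^x(1-u)^{-N}$, and $(1-u)^{-N}\le e^{Nu/(1-u)}$ is bounded because $Nu\le 2/d$ and $u\le 2/3$. This gives $\ll q^x/x$ once $j\ge\log_2 x$. If $j<\log_2 x$, then $d>x/\log_2 x$, so $N\ge q^d/(2d)$ is far larger than $j$. Your $j\le N$ estimate then applies, with (i) covering $j=1$.

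Compared with the paper's three ranges of $d$, this route needs only one uniform generating-function bound plus the large-$d$ estimate, and it never has to locate the threshold $d\approx\log_q x$. The paper's route, in exchange, uses nothing beyond elementary binomial inequalities.
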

	\begin{proof}
		We note that
		\begin{align}\label{sum_W}
			\#\mathcal{W}(x)= \sum_{d\mid x}\#\mathcal{W}_d(x),
		\end{align}
		where $\mathcal{W}_d(x)$ is the set of monics $B$ in $\mathbb{F}_q[T]$ with $\deg B = x$, such that $\deg P = d$ for all prime divisors $P$ of $B$. It thus suffices to show that for all $d\mid x$, $$\#\mathcal{W}_d(x) \ll q^x/x.$$
		Let $k=\pi_{\mathcal{P}}(d)$. We denote the set of all primes of degree $d$ as
		\begin{align}
			\mathbb{P}_d = \{P_1, P_2, ... P_k\}.
		\end{align}
		The set $\mathcal{W}_d(x)$ is of the form
		\begin{align}
			\mathcal{W}_d(x) = \{P_1^{e_1} P_2^{e_2} \cdots P_k^{e_k} : e_i \geq 0, e_1 + e_2 + \cdots + e_k = x/d \}.
		\end{align}
		This is in bijective correspondence with the set 
		\begin{align}\label{stars_bars}
			\{(e_1, e_2, \cdots, e_k) : e_i \geq 0, e_1 + e_2 + \cdots + e_k = x/d \}.
		\end{align}
		\textbf{Case 1 :} Assume $d < \frac{1}{2}\log_q x$. By \eqref{stars_bars}, the set $\mathcal{W}_d(x)$  sits injectively inside the set $$\{(e_1, e_2, \cdots, e_k) : 0\leq  e_i \leq x/d \}.$$ 
		Since every zero of a prime of degree $d$ belongs to $\mathbb{F}_{q^d}$ and no two primes have a common zero, we have that $d\pi_{\mathcal{P}}(d) \leq q^d$. Hence, we have
		\begin{align}
			\#\mathcal{W}_d(x)
			\leq
			\left(1+\frac{x}{d}\right)^k
			\leq
			\left(\frac{2x}{d}\right)^{q^d/d}.
		\end{align}
		Thus, as $d<\frac{1}{2}\log_q x$, we obtain
		\begin{align}
			\log_q\#\mathcal{W}_d(x)
			\leq
			\frac{q^d}{d}
			\log_q\left(\frac{2x}{d}\right)
			\ll
			\sqrt{x}\log_q x
			\ll
			\frac{x}{2},
		\end{align}
		for all sufficiently large $x$. As a result,
		\begin{align}
			\#\mathcal{W}_d(x)\ll q^{x/2}\ll \frac{q^x}{x}.
		\end{align}
		\textbf{Case 2 :} 
		Assume $\frac{1}{2}\log_q x \leq d < \log_q x$. By \eqref{stars_bars}, the size of $\mathcal{W}_d(x)$ is exactly given by the formula 
		\begin{align}
			\# \mathcal{W}_d(x) = {k + x/d - 1\choose x/d}.
		\end{align}
		Since $k= \pi_\mathcal{P}(d) \leq \frac{q^d}{d}$ and $d < \log_q x$, we have that
		\begin{align}
			\# \mathcal{W}_d(x) \leq {2x/d\choose x/d} \leq 4^{x/d}.
		\end{align}
		Hence using $\frac{1}{2}\log_q x \leq d$, we get that
		\begin{align}
			\log_q \# \mathcal{W}_d(x) \ll \frac{x}{d} \leq \frac{2x}{\log_q x} \ll \frac{x}{2}.
		\end{align}
		As a result, $$\#\mathcal{W}_d(x) \ll q^{x/2}\ll \frac{q^x}{x}.$$
		\textbf{Case 3 :} Assume $d \geq \log_q x$. 
		We use the inequality ${u \choose v} \leq \frac{u^v}{v!}$ to write 
		\begin{align}
			\# \mathcal{W}_d(x) \leq \frac{(k + x/d-1)^{x/d}}{(x/d)!} \leq \frac{(k + x/d)^{x/d}}{(x/d)!}.
		\end{align}
		Since  $d \geq \log_q x$, we have $x \leq q^d$ and hence by Lemma \ref{lem:PNT in ff}, 
		\begin{align}
			\# \mathcal{W}_d(x) \leq \frac{(2q^d/d)^{x/d}}{(x/d)!} = q^x \frac{2^{x/d}}{d^{x/d}(x/d)!} \ll \frac{q^x}{x},
		\end{align}
		where the last bound follows by the Stirling inequality $n! \ge \left(\frac{n}{e}\right)^n$. Combining the three cases and using the fact that $\tau(x)\ll_\epsilon x^\epsilon$ for arbitrary $\epsilon>0$, the lemma follows.
	\end{proof}

	Thus by Lemma \ref{lem:N}, we can say that for any arbitrary $0<\epsilon<1$, we have the bound
		\begin{align}
			\#\{\text{monic }A \in \mathbb{F}_q[T]: \deg A = n, \Delta_2(A)=0\}=\#\mathcal{W}(n)\ll_\epsilon \frac{q^n}{n^\epsilon}.
		\end{align}
		Hence, it now only suffices to estimate the count
		\begin{align}
			\#S_2^+(n,m) = \# \{\text{monic }A \in \mathbb{F}_q[T]: \deg A = n, 1\leq \Delta_2(A)\leq m\}.
		\end{align}
		Let $M(n)$ be the set of all $n$-degreed polynomials having multiple (including multiplicity) prime factors  of largest degree and let $U(n,m)=S_2^+(n,m)\setminus M(n)$. We show that 
		\begin{align}
			\#M(n) \ll_\epsilon \frac{q^n}{n^\epsilon} \quad \text{and} \quad
			\#U(n,m) \ll q^n\frac{m}{n}.
		\end{align}
		To estimate $\#M(n)$, we first note that by Thorne's bound \eqref{eq : Psi1 in ff}, for any $0<\epsilon<1$,
		\begin{equation}\label{S1}
			\#S_1(n,n^\epsilon) = \Psi_1(n,n^\epsilon) \ll q^n n^\epsilon e^{-n^{1-\epsilon}}.
		\end{equation}
		For the remainder set $R(n)=M(n)\setminus S_1(n,n^\epsilon)$, we note that any $A\in R(n)$ can be written as $BQQ'$, where $Q,Q'$ are primes with $\deg Q = \deg Q' = \Delta_1(A)>n^\epsilon$. Hence, we have the inequality
		\begin{equation}
			\begin{split}
				\#R(n) \leq  \sum_{d>n^\epsilon}\sideset{}{'}\sum_{\substack{\text{primes }Q,Q'\\ \deg Q = \deg Q' = d}}\sideset{}{'}\sum_{\deg B=n-2d}1.
			\end{split}
		\end{equation}
		By  Lemma \ref{lem:PNT in ff} and the fact that there are $q^{n-2d}$ monics of degree $n-2d$, we obtain 
		\begin{equation}\label{Rn}
			\#R(n) \ll \sum_{d>n^\epsilon} \frac{q^{2d}}{d^2}q^{n-2d} = q^n \sideset{}{'}\sum_{d>n^\epsilon} \frac{1}{d^2} \ll \frac{q^n}{n^\epsilon}.
		\end{equation}
		Combining \eqref{S1} and $\eqref{Rn}$, we get
		\begin{equation}\label{eq: Mn}
			\#M(n) \ll \frac{q^n}{n^\epsilon}+q^n n^\epsilon e^{-n^{1-\epsilon}} \ll \frac{q^n}{n^\epsilon}.
		\end{equation}
		
		We now estimate the size of the set $U(n,m)=S_2^+(n,m)\setminus M(n)$. We write $U(n,m)$ as a disjoint union of $U_d(n,m)$ for $1\leq d\leq m$,
		where $$U_d(n,m)=\{A\in U(n,m) : \Delta_2(A)=d\}.$$ For each $A\in U_d(n,m)$, write $A=BQ$ where $Q$ is the unique prime divisor of $A$ with $\deg Q=\Delta_1(A)$. This yields the bijective correspondence
		\begin{equation}
			\Phi:U_d(n,m) \to \{(B,Q):B \text{ monic with }\deg B<n-d, \Delta_1(B)= d, Q \text{ prime with}\deg Q = n-\deg B\}.
		\end{equation}
		Hence, we have the identity
		\begin{equation}
			\# U_d(n,m) = \sideset{}{'}\sum_{\substack{\deg B < n-d\\ \Delta_1(B)=d}} \sideset{}{'}\sum_{\substack{\text{prime }Q\\ \deg Q =n-\deg B}}1. 
		\end{equation}
		Employing  Lemma \ref{lem:PNT in ff} and splitting the first sum at $n/2$ gives
		\begin{equation}\label{eq: Ud split}
			\# U_d(n,m) \ll \sideset{}{'}\sum_{\substack{\deg B \leq  n/2\\ \Delta_1(B)=d}} \frac{q^{n-\deg B}}{n-\deg B} + \sideset{}{'}\sum_{\substack{n/2 < \deg B < n-d\\ \Delta_1(B)=d}} \frac{q^{n-\deg B}}{n-\deg B}.
		\end{equation}
		For $\deg B \leq n/2$, we have $n-\deg B \geq n/2$ and hence the first sum in \eqref{eq: Ud split} becomes
		\begin{equation}
			\sideset{}{'}\sum_{\substack{\deg B \leq  n/2\\ \Delta_1(B)=d}} \frac{q^{n-\deg B}}{n-\deg B} \ll \frac{q^n}{n} \sideset{}{'}\sum_{\substack{\deg B \leq  n/2\\ \Delta_1(B)=d}}q^{-\deg B}.
		\end{equation}
		For the second sum in \eqref{eq: Ud split}, we have $n-\deg B > d$ and so 
		\begin{equation}
			\sideset{}{'}\sum_{\substack{n/2 <\deg B < n-d\\ \Delta_1(B)=d}} \frac{q^{n-\deg B}}{n-\deg B} < \frac{q^n}{d} \sideset{}{'}\sum_{\substack{n/2 <\deg B < n-d\\ \Delta_1(B)=d}}q^{-\deg B}.
		\end{equation}
		Since $U(n,m)$ is a disjoint union of $U_d(n,m)$, we hence have the estimate
		\begin{align}
			\# U(n,m) &\ll q^n \left(\frac{1}{n}\sum_{d\leq m} \sideset{}{'}\sum_{\substack{\deg B \leq  n/2\\ \Delta_1(B)=d}}q^{-\deg B}  + \sum_{d\leq m}\frac{1}{d}\sideset{}{'}\sum_{\substack{n/2 <\deg B < n-d\\ \Delta_1(B)=d}}q^{-\deg B}\right)\\
			&\leq q^n \left(\frac{1}{n}\sum_{d\leq m} \sideset{}{'}\sum_{\substack{\deg B \leq  n/2\\ \Delta_1(B)=d}}q^{-\deg B}  + \sum_{d\leq m}\frac{1}{d}\sideset{}{'}\sum_{\substack{n/2 <\deg B\\ \Delta_1(B)=d}}\frac{\deg B}{n/2}q^{-\deg B}\right).
		\end{align}
		
		By Lemma \ref{lem:useful}, we can simplify this to write
		\begin{equation}
			\begin{split}
				\# U(n,m) 
				&\ll q^n \left(\frac{1}{n} \sideset{}{'}\sum_{\substack{\deg B \leq  n/2\\ \Delta_1(B)\leq m}}q^{-\deg B}  + \sum_{d\leq m}\frac{1}{n}\right)\\
				&\leq q^n \left(\frac{1}{n} \sideset{}{'}\sum_{\substack{\deg B \leq  n/2\\ \Delta_1(B)\leq m}}q^{-\deg B}  + \frac{m}{n}\right).
			\end{split}
		\end{equation}
		Using Mertens' estimate \eqref{merten}, we obtain
		\begin{align}\label{eq:Un}
			\# U(n,m)\ll q^n\frac{m}{n}.
		\end{align}
		Combining equations \eqref{eq: Mn} and \eqref{eq:Un} proves the theorem.

	\section{Proof of Theorem \ref{thm:landau_two_FF}}
	
		We write 
		\begin{align}
			\sideset{}{'}\sum_{1\leq \deg A\leq x} \frac{\mu(A)\Omega(A)}{q^{\deg A}}
			&= \frac{1}{q^x}\sideset{}{'}\sum_{1\leq \deg A\leq x} \mu(A)\Omega(A)q^{x-\deg A}\\
			&= \frac{1}{q^x}\sideset{}{'}\sum_{1\leq \deg A\leq x} \mu(A)\Omega(A)\sideset{}{'}\sum_{\substack{\deg B = x\\ A\mid B}} 1.		
		\end{align}
		Interchanging the order of summation, we get 
		\begin{align}
			\sideset{}{'}\sum_{1\leq \deg A\leq x} \frac{\mu(A)\Omega(A)}{q^{\deg A}} 
			= \frac{1}{q^x}\sideset{}{'}\sum_{\deg B=x} \sideset{}{'}\sum_{A\mid B} \mu(A)\Omega(A).
		\end{align}
		By Corollary \ref{lem:sum muomega in ff}, the above equality simplifies to 
		\begin{align}
			\sideset{}{'}\sum_{1\leq \deg A\leq x} \frac{\mu(A)\Omega(A)}{q^{\deg A}}
			= - \frac{\#\mathcal{W}(x)}{q^x}.
		\end{align}
		Hence, by Lemma \ref{lem:N}, the theorem follows.

	\section{Proof of Theorem \ref{thm:mainthm AJ}}
	
	Our first step in this direction is obtaining
	an equivalence result which relies on the duality identity and Lemma \ref{lem:QS1}.

	\begin{lemma}\label{lem:equivalence}
		We have that 
		\begin{align}
			\lim_{x\to\infty}\sideset{}{'}\sum_{\substack{1\leq\deg A\leq x\\A\in\mathcal{D}(\mathcal{S})}}\frac{\mu(A)\Omega(A)}{q^{\deg A}}=0
		\end{align}
		holds if and only if 
		\begin{align}
			\frac{1}{q^n}\sideset{}{'}\sum_{\deg A = n}Q_\mathcal{S}^{(2)}(A)\to\delta(\mathcal{S}),
		\end{align}
		as $n \to \infty$. 
	\end{lemma}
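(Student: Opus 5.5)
The plan is to express the restricted weighted sum as an average over monics of degree $x$, exactly as in the proof of Theorem \ref{thm:landau_two_FF}, and then evaluate the inner divisor sum with the dualities. Since $q^{x-\deg A}$ is the number of monics $B$ of degree $x$ divisible by $A$, interchanging summations gives
\begin{align}
\sideset{}{'}\sum_{\substack{1\leq\deg A\leq x\\A\in\mathcal{D}(\mathcal{S})}}\frac{\mu(A)\Omega(A)}{q^{\deg A}}
=\frac{1}{q^x}\sideset{}{'}\sum_{\deg B=x}\ \sideset{}{'}\sum_{\substack{A\in\mathcal{D}(\mathcal{S})\\ A\mid B}}\mu(A)\Omega(A).
\end{align}
Here $A=1\notin\mathcal{D}(\mathcal{S})$, so no boundary term appears.

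To evaluate the inner sum, I will write $\Omega(A)=\binom{\Omega(A)-1}{1}+\binom{\Omega(A)-1}{0}$. Then I apply Theorem \ref{thm:duality in ff} with $f$ equal to $1$ on positive integers and $f(0)=0$, once with $k=2$ and once with $k=1$. For $A\in\mathcal{D}(\mathcal{S})$ with $A\neq 1$ we have $\delta_1(A)\geq1$, so $f(\delta_1(A))=1$. The duality therefore gives
\begin{align}
\sideset{}{'}\sum_{\substack{A\in\mathcal{D}(\mathcal{S})\\ A\mid B}}\mu(A)\Omega(A)=Q^{(2)}_\mathcal{S}(B)f(\Delta_2(B))-Q^{(1)}_\mathcal{S}(B)f(\Delta_1(B)).
\end{align}
The factors $f(\Delta_k(B))$ are harmless. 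When $\Delta_k(B)=0$ we have $k>\Omega(B)$, so no prime has degree $\Delta_k(B)$ and $Q^{(k)}_\mathcal{S}(B)=0$ anyway; we may therefore drop $f$. Likewise $\Delta_1(B)\geq1$ for $\deg B=x\geq1$.

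This yields the exact identity
\begin{align}
\sideset{}{'}\sum_{\substack{1\leq\deg A\leq x\\A\in\mathcal{D}(\mathcal{S})}}\frac{\mu(A)\Omega(A)}{q^{\deg A}}
=\frac{1}{q^x}\sideset{}{'}\sum_{\deg B=x}Q^{(2)}_\mathcal{S}(B)-\frac{1}{q^x}\sideset{}{'}\sum_{\deg B=x}Q^{(1)}_\mathcal{S}(B).
\end{align}
By Lemma \ref{lem:QS1}, the second average tends to $\delta(\mathcal{S})$. Hence the left side tends to $0$ if and only if the average of $Q^{(2)}_\mathcal{S}$ tends to $\delta(\mathcal{S})$, which is the claimed equivalence.

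The only delicate point is the bookkeeping with $f(0)=0$ and the convention $\Delta_k=0$ for $k>\Omega$. Otherwise the argument is formal, given Theorem \ref{thm:duality in ff} and Lemma \ref{lem:QS1}.
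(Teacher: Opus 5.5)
Your proposal is correct and follows essentially the same route as the paper: add the $k=1$ and $k=2$ dualities with $f\equiv 1$ to get $Q^{(2)}_{\mathcal{S}}-Q^{(1)}_{\mathcal{S}}$, average over monics of degree $n$ using the interchange of summation, and conclude with Lemma \ref{lem:QS1}. Your explicit check of the $f(0)=0$ convention is a detail the paper leaves implicit: $1\notin\mathcal{D}(\mathcal{S})$, and $Q^{(k)}_{\mathcal{S}}(B)=0$ whenever $\Delta_k(B)=0$.
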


	\begin{proof}
		We consider the general duality identity from Theorem \ref{thm:duality in ff} specifically when $f(n)=1$ for all positive integers $n$. Adding the identities for $k=1$ and $k=2$, we obtain
		\begin{equation}
			\sideset{}{'}\sum_{B\in\mathcal{D}(\mathcal{S}), \hspace{1mm} B\mid A} \mu(B)\Omega(B)=Q_\mathcal{S}^{(2)}(A)-Q_\mathcal{S}^{(1)}(A).
		\end{equation}
		Summing over all monics $A$ with degree $n$ and dividing by $q^n$, we have
		\begin{equation}\label{eq:equivalence_eq1}
			\frac{1}{q^n}\sideset{}{'}\sum_{\deg A=n}\hspace{2mm}\sideset{}{'}\sum_{B\in\mathcal{D}(\mathcal{S}), \hspace{1mm} B\mid A} \mu(B)\Omega(B)=\frac{1}{q^n}\sideset{}{'}\sum_{\deg A=n}\left(Q_\mathcal{S}^{(2)}(A)-Q_\mathcal{S}^{(1)}(A)\right).
		\end{equation}
		Changing the order of summation on the left hand side gives us 
		\begin{equation}\label{eq:equivalence_eq2}
			\frac{1}{q^n}\sideset{}{'}\sum_{\substack{1\leq \deg B\leq n\\ B\in\mathcal{D}(\mathcal{S})}}\mu(B)\Omega(B)\sideset{}{'}\sum_{\substack{\deg A =n\\ B\mid A}} 1 = \frac{1}{q^n}\sideset{}{'}\sum_{\substack{1\leq \deg B\leq n\\ B\in\mathcal{D}(\mathcal{S})}}\mu(B)\Omega(B) q^{n-\deg B}.
		\end{equation}
		Combining  \eqref{eq:equivalence_eq1} and \eqref{eq:equivalence_eq2}, we obtain
		\begin{equation}\label{eq:equivalence_eq3}
			\sideset{}{'}\sum_{\substack{1\leq \deg B\leq n\\ B\in\mathcal{D}(\mathcal{S})}}\frac{\mu(B)\Omega(B)}{q^{\deg B}}=\frac{1}{q^n}\sideset{}{'}\sum_{\deg A=n}\left(Q_\mathcal{S}^{(2)}(A)-Q_\mathcal{S}^{(1)}(A)\right).
		\end{equation}
		Thus, if we assume that as $n \to \infty$,
		\begin{align}
			\frac{1}{q^n}\sideset{}{'}\sum_{\deg A = n}Q_\mathcal{S}^{(2)}(A)\to\delta(\mathcal{S}),
		\end{align}
		then by Lemma \ref{lem:QS1},  we have from \eqref{eq:equivalence_eq3} that
		\begin{align}
			\lim_{x\to\infty}\sideset{}{'}\sum_{\substack{1\leq\deg A\leq x\\A\in\mathcal{D}(\mathcal{S})}}\frac{\mu(A)\Omega(A)}{q^{\deg A}}=0.
		\end{align}
		Conversely, if the left hand side of \eqref{eq:equivalence_eq3} tends to zero as $n \to \infty$, then adding \eqref{eq:equivalence_eq3} with \eqref{eq:QS1} implies 
		\begin{align}
			\frac{1}{q^n}\sideset{}{'}\sum_{\deg A = n}Q_\mathcal{S}^{(2)}(A)\to\delta(\mathcal{S}),
		\end{align}
		as $n \to \infty$. This proves the lemma.
	\end{proof}
	
	So, by Lemma \ref{lem:equivalence}, to prove Theorem \ref{thm:mainthm AJ}, it suffices to only show that
	\begin{align}
		\frac{1}{q^n}\sideset{}{'}\sum_{\deg A = n}Q_\mathcal{S}^{(2)}(A)\to\delta(\mathcal{S}),
	\end{align}
	as $n \to \infty$. We first prove this result for $\mathcal{S}$ being $\mathcal{P}$, the set of all primes. By definition, we have 
	\begin{align}
		\sideset{}{'}\sum_{\deg A=n}Q_\mathcal{P}^{(2)}(A)=\sideset{}{'}\sum_{\deg A=n}\sideset{}{'}\sum_{\substack{\text{prime }P\mid A\\ \deg P=\Delta_2(A)}}1.
	\end{align}
	Since $\Delta_2(A)<\frac{\deg A}{2}$, by changing the order of summation, we have
	\begin{align}\label{imp1}
		\sideset{}{'}\sum_{\deg A=n}Q_\mathcal{P}^{(2)}(A)=\sideset{}{'}\sum_{\substack{\text{prime }P\\ \deg P <\frac{n}{2}}}\hspace{2mm} \sideset{}{'}\sum_{\substack{P \mid A, \hspace{1mm}\deg A=n\\ \Delta_2(A)=\deg P}}1.
	\end{align}
	For a fixed prime $P$, let $A$ be a monic such that $\Delta_2(A)=\deg P$. Then $B=A/P$ will be a monic polynomial such that $\deg B = \deg A -\deg P$ and $\Delta_2(B)\leq \deg P$. Hence, the inner sum in \eqref{imp1} can be bounded by
	\begin{equation}\label{imp2}
		\sideset{}{'}\sum_{\substack{P \mid A, \hspace{1mm}\deg A=n\\ \Delta_2(A)=\deg P}}1 \leq \sideset{}{'}\sum_{\substack{\deg B = n-\deg P\\ \Delta_2(B)\leq \deg P}}1.
	\end{equation}
	This motivates us to find a suitable estimate for the quantity $\Psi_2(n,m)=\#S_2(n,m)$, where
	\begin{align}\label{def:S2}
		S_2(n,m)=\{\text{monic }A \in \mathbb{F}_q[T]: \deg A = n, \Delta_2(A)\leq m\}.
	\end{align}

	We are now ready to obtain an asymptotic estimate for $$\sideset{}{'}\sum_{\deg A =n}Q_\mathcal{P}^{(2)}(A).$$

	\begin{lemma}\label{QP}
		As $n \rightarrow \infty$, 
		\begin{equation}
			\frac{1}{q^n}\sideset{}{'}\sum_{\deg A=n}Q_\mathcal{P}^{(2)}(A) \to 1.
		\end{equation}
	\end{lemma}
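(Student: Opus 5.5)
The plan is to sandwich $\frac{1}{q^n}\sideset{}{'}\sum_{\deg A=n}Q_\mathcal{P}^{(2)}(A)$ between $1-o(1)$ and $1+o(1)$. Rather than using \eqref{imp1}--\eqref{imp2} crudely (feeding Theorem \ref{thm:Psi2} into $\sum_{d<n/2}\pi_\mathcal{P}(d)\Psi_2(n-d,d)$ only yields a bounded quantity, not one tending to $1$), I will isolate the ``excess'' coming from several primes sharing the second largest degree.

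\textbf{Lower bound.} If $\Omega(A)\geq 2$, then $\Delta_2(A)$ is the degree of at least one prime divisor of $A$, so $Q_\mathcal{P}^{(2)}(A)\geq 1$. The monics of degree $n$ with $\Omega(A)\leq 1$ form exactly $\mathcal{W}(n)$ (together with $A=1$ when $n=0$). By Lemma \ref{lem:N}, $\#\mathcal{W}(n)=o(q^n)$. Hence
\begin{align}
\sideset{}{'}\sum_{\deg A=n}Q_\mathcal{P}^{(2)}(A)\geq q^n-\#\mathcal{W}(n)=q^n(1-o(1)).
\end{align}

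\textbf{Upper bound.} For every $A$ we have $Q_\mathcal{P}^{(2)}(A)\leq 1+E(A)$, where $E(A)$ is the number of unordered pairs $\{P,P'\}$ of distinct primes dividing $A$ with $\deg P=\deg P'=\Delta_2(A)$. Indeed, if $s=Q_\mathcal{P}^{(2)}(A)\geq 1$, then $s-1\leq\binom{s}{2}$, and when $s=0$ the inequality is trivial. It therefore suffices to show $\sideset{}{'}\sum_{\deg A=n}E(A)=o(q^n)$.

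Swapping the order of summation, put $d=\Delta_2(A)<n/2$ and write $A=PP'C$ with $\deg C=n-2d$. Removing primes cannot raise the second largest degree, so $\Delta_2(C)\leq d$. Using Lemma \ref{lem:PNT in ff} in the form $\pi_\mathcal{P}(d)\ll q^d/d$, this gives
\begin{align}
\sideset{}{'}\sum_{\deg A=n}E(A)\leq\sum_{1\leq d<n/2}\pi_\mathcal{P}(d)^2\,\Psi_2(n-2d,d)\ll q^n\sum_{1\leq d<n/2}\frac{1}{d^2}\cdot\frac{\Psi_2(n-2d,d)}{q^{n-2d}}.
\end{align}
Each ratio $\Psi_2(n-2d,d)/q^{n-2d}$ is at most $1$, and the weights $d^{-2}$ are summable.

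\textbf{The main step.} What remains is to show that for each fixed $d$ the ratio $\Psi_2(n-2d,d)/q^{n-2d}$ tends to $0$ as $n\to\infty$. This follows from Theorem \ref{thm:Psi2} with $m=d$ fixed, so that $m=o(n-2d)$. The theorem gives
\begin{align}
\frac{\Psi_2(n-2d,d)}{q^{n-2d}}\ll_\epsilon (n-2d)^{-\epsilon}+\frac{d}{n-2d}\longrightarrow 0.
\end{align}
To pass to the limit inside the sum, fix $D$ and split at $D$. The terms with $d>D$ contribute at most $\sum_{d>D}d^{-2}\ll 1/D$. The finitely many terms with $d\leq D$ tend to $0$ by the display above. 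Letting $n\to\infty$ and then $D\to\infty$ (dominated convergence) gives $\sideset{}{'}\sum_{\deg A=n}E(A)=o(q^n)$.

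Consequently $\sideset{}{'}\sum_{\deg A=n}Q_\mathcal{P}^{(2)}(A)\leq q^n+o(q^n)$, and combined with the lower bound this proves the lemma. The only delicate point is the uniformity in $d$. Theorem \ref{thm:Psi2} needs $m=o(n)$, so it cannot be applied across the whole range $d<n/2$; this is handled by applying it only to fixed $d$ and controlling large $d$ through the summable weight $d^{-2}$.
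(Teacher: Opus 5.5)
Your proof is correct, but it is organized differently from the paper's. The paper fixes a cutoff $m=m(n)\to\infty$ with $m=o(n)$ and splits according to whether $\Delta_2(A)\le m$. The head $\mathcal{H}$ is bounded by $\sum_{d\le m}\pi_\mathcal{P}(d)\Psi_2(n-d,d)=o(q^n)$, which uses Theorem \ref{thm:Psi2} uniformly in $d\le m$. The tail satisfies $q^n-\Psi_2(n,m)\le\mathcal{T}\le q^n+O(q^n/m)$: the lower bound again comes from Theorem \ref{thm:Psi2}, now at $m=m(n)$, and the upper bound from counting pairs of primes of common degree $d>m$ against the trivial count $q^{n-2d}$ for the cofactor. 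You dispense with the cutoff altogether. Your lower bound needs only Lemma \ref{lem:N}, since for $n\ge 1$ the monics with $\Omega(A)\le 1$ are exactly $\mathcal{W}(n)$. Your upper bound uses $Q_\mathcal{P}^{(2)}(A)\le 1+\binom{Q_\mathcal{P}^{(2)}(A)}{2}$ and counts pairs over the whole range of $d$. Because you keep the factor $\Psi_2(n-2d,d)$ (legitimately, as $\mathcal{N}(C)\subseteq\mathcal{N}(A)$ gives $\Delta_2(C)\le\Delta_2(A)$), small $d$ are killed pointwise and large $d$ by the summable weight $d^{-2}$. This route needs strictly less analytic input: only the qualitative fact $\Psi_2(N,d)=o(q^N)$ for each fixed $d$, with no uniformity in $d$. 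What the paper's split buys is reusability. The proof of Lemma \ref{QS} needs, for a cutoff $m(n)$ chosen via Lemma \ref{lem:vs}(b), the separate statements $\mathcal{H}=o(q^n)$ and $\mathcal{T}=q^n+o(q^n)$, whereas your argument yields only their sum $\mathcal{H}+\mathcal{T}=q^n+o(q^n)$. To feed Lemma \ref{QS} you would therefore still have to prove the head estimate, which does use Theorem \ref{thm:Psi2} uniformly in $d\le m$. After that, $\mathcal{T}=q^n+o(q^n)$ follows by subtraction, and the paper's two-sided tail analysis becomes unnecessary.
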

	
	\begin{proof}
		From \eqref{imp1}, we have
		\begin{equation}
			\sideset{}{'}\sum_{\deg A=n}Q_\mathcal{P}^{(2)}(A)=\sideset{}{'}\sum_{\substack{\text{prime }P\\ \deg P<\frac{n}{2}}}\hspace{2mm}\sideset{}{'}\sum_{\substack{P \mid A, \hspace{1mm}\deg A=n\\ \Delta_2(A)=\deg P}}1.
		\end{equation}
		In order to exploit Theorem \ref{thm:Psi2}, we choose $m:=m(n)\geq 0$ such that $m \rightarrow \infty$ as $n \rightarrow \infty$ and $m=o(n)$. We split the sum over $P$ at $\deg P = m$ into $\mathcal{H}$ and $\mathcal{T}$, where
		\begin{align}
			\mathcal{H}:=\sideset{}{'}\sum_{\substack{\text{prime }P\\ \deg P\leq m}}\hspace{2mm}\sideset{}{'}\sum_{\substack{P \mid A, \hspace{1mm}\deg A=n\\ \Delta_2(A)=\deg P}}1,
		\end{align}
		and
		\begin{align}\label{tail def}
			\mathcal{T}:=\sideset{}{'}\sum_{\substack{\text{prime }P\\ m < \deg P < \frac{n}{2}}}\hspace{2mm}\sideset{}{'}\sum_{\substack{P \mid A, \hspace{1mm}\deg A=n\\ \Delta_2(A)=\deg P}}1.
		\end{align}
		We first estimate the quantity $\mathcal{H}$. As done in \eqref{imp2}, we have the bound
		\begin{align}
			\mathcal{H} \leq \sideset{}{'}\sum_{\substack{\text{prime }P\\ \deg P\leq m}}\Psi_2(n-\deg P,\deg P)= \sum_{d\leq m} \Psi_2(n-d,d)\sideset{}{'}\sum_{\substack{\text{prime }P\\ \deg P=d}}1.
		\end{align}
		Note that for all $d\leq m$, we have $d=o(n-d)$ since $m=o(n)$. So, by Theorem \ref{thm:Psi2} and the prime number theorem, we have that
		\begin{align}
			\mathcal{H} \ll \sum_{d\leq m}\left(\frac{q^{n-d}}{(n-d)^\epsilon}+q^{n-d}\cdot \frac{d}{n-d}\right)\cdot \frac{q^d}{d},
		\end{align}
		for any $0<\epsilon<1$. Since $d\leq m\leq \frac{n}{2}$, we have $\frac{1}{(n-d)^\epsilon} \leq \frac{2^\epsilon}{n^\epsilon}$ and $\frac{1}{n-d} \leq \frac{2}{n}$. Hence, we have
		\begin{equation}
			\mathcal{H} \ll q^n \frac{\log m}{n^\epsilon}+q^n \frac{m}{n}.
		\end{equation}
		Since $m=o(n)$, we obtain
		\begin{align}\label{head}
			\mathcal{H}=o(q^n).
		\end{align} 
		To estimate $\mathcal{T}$, we  write
		\begin{equation}
			\begin{split}
				\mathcal{T} = \sideset{}{'}\sum_{\substack{\text{prime }P\\ m < \deg P<\frac{n}{2}}}\sideset{}{'}\sum_{\substack{P \mid A, \hspace{1mm}\deg A=n\\ \Delta_2(A)=\deg P}}1 &= \sideset{}{'}\sum_{\deg A=n}\sideset{}{'}\sum_{\substack{\text{prime }P\mid A\\ m<\deg P=\Delta_2(A)<\frac{n}{2}}}1,
			\end{split}
		\end{equation}
		upon changing the order of summation.
		Since there are exactly $q^n$ monics of degree $n$, we can write
		\begin{equation}
			\mathcal{T}=q^n + \sideset{}{'}\sum_{\deg A=n}\left(\left(\sideset{}{'}\sum_{\substack{\text{prime }P\mid A\\ m<\deg P=\Delta_2(A)<\frac{n}{2}}}1\right)\hspace{3mm}-\hspace{3mm}1\right).
		\end{equation}
		Note that we have $A\in S_2(n,m)$ (see \eqref{def:S2}) if and only if
		\begin{align}
			\sideset{}{'}\sum_{\substack{\text{prime }P\mid A\\ m<\deg P=\Delta_2(A)<\frac{n}{2}}}1 = 0.
		\end{align}
		Hence, we obtain
		\begin{align}
			\mathcal{T} - q^n \gg -\Psi_2(n,m).
		\end{align}
		By Theorem \ref{thm:Psi2}, 
		\begin{align}\label{tail-lower}
			\mathcal{T} - q^n \gg_\epsilon -\frac{q^n}{n^\epsilon}- q^n\frac{m}{n}.
		\end{align}
		We now show that 
		\begin{align}
			\mathcal{T} - q^n \ll \frac{q^n}{m}.
		\end{align}
		Note that if $Q_\mathcal{P}^{(2)}(A)=1$, then 
		\begin{equation}
			\left(\sideset{}{'}\sum_{\substack{\text{prime }P\mid A\\ m<\deg P=\Delta_2(A)<\frac{n}{2}}}1\right) \hspace{3mm} - \hspace{3mm} 1 \hspace{3mm} \leq \hspace{3mm} 0.
		\end{equation}
		So, we can write 
		\begin{equation}
			\mathcal{T} - q^n \leq \sideset{}{'}\sum_{\substack{\deg A=n\\ Q_{\mathcal{P}}^{(2)}(A)\geq 2}}\hspace{2mm}\sideset{}{'}\sum_{\substack{\text{prime }P\mid A\\ m<\deg P=\Delta_2(A)<\frac{n}{2}}}1.
		\end{equation}
		Interchanging the order of summation again yields
		\begin{equation}
			\mathcal{T} - q^n \leq \sideset{}{'}\sum_{\substack{P \text{ prime }\\ m<\deg P<\frac{n}{2}}} \#\{\text{monic }A\in \mathbb{F}_q[T]:\deg A=n, P\mid A, Q_\mathcal{P}^{(2)}(A)\geq 2, \Delta_2(A)=\deg P\}.
		\end{equation}
		We write such $A$ as $A=PP'B$, where $P,P'$ are primes of degree $\Delta_2(A)$. As a result,
		\begin{equation}
			\begin{split}
				\mathcal{T} - \hspace{2mm}q^n &\leq \sideset{}{'}\sum_{m<d<\frac{n}{2}}\sideset{}{'}\sum_{\substack{\text{prime }P,P'\\ \deg P'=\deg P=d}} 1 \sideset{}{'}\sum_{\substack{\deg B =n-2d}}1\\
				&\ll \sideset{}{'}\sum_{m<d<\frac{n}{2}} \frac{q^{2d}}{d^2}q^{n-2d}\ll \frac{q^n}{m}.
			\end{split}
		\end{equation}
		Thus, we obtain
		\begin{align}\label{tail-upper}
			\mathcal{T} - q^n \ll \frac{q^n}{m}.
		\end{align}
		Combining \eqref{tail-upper} and \eqref{tail-lower}, and taking $m=m(n)$ such that $m=o(n)$ and $m \to \infty$ as $n\to \infty$, we obtain
		\begin{align}\label{tail}
			\mathcal{T} = q^n + o(q^n).
		\end{align}
		Hence, combining  \eqref{head} and \eqref{tail}, we obtain that as $n\to\infty$,
		\begin{equation}
			\frac{1}{q^n}\sideset{}{'}\sum_{\deg A=n}Q_\mathcal{P}^{(2)}(A) \to 1.
		\end{equation}
	\end{proof}
	The result in Lemma \ref{QP} is instrumental in obtaining an asymptotic estimate for $$\sideset{}{'}\sum_{\deg A =n}Q_\mathcal{S}^{(2)}(A),$$
	where $\mathcal{S}$ is any arbitrary subset of $\mathcal{P}$. We derive this in the following lemma.
	
	\begin{lemma}\label{QS}
		As $n \rightarrow \infty$, we have
		\begin{equation}
			\frac{1}{q^n}\sideset{}{'}\sum_{\deg A=n}Q_\mathcal{S}^{(2)}(A) \to \delta(\mathcal{S}).
		\end{equation}
	\end{lemma}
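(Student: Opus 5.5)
Fix $n$ and change the order of summation as in \eqref{imp1}:
\begin{align}
\sideset{}{'}\sum_{\deg A=n}Q_\mathcal{S}^{(2)}(A)=\sum_{d<n/2}\ \sideset{}{'}\sum_{\substack{P\in\mathcal{S}\\ \deg P=d}} N(P,n),
\end{align}
where $N(P,n)=\#\{A:\deg A=n,\ P\mid A,\ \Delta_2(A)=d\}$.

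The key observation is that $N(P,n)$ depends only on $d=\deg P$. Indeed, any bijection of $\mathcal{P}$ that permutes primes of equal degree extends to a degree-preserving bijection of the monics, and this bijection preserves $\mathcal{N}(\cdot)$. Writing $N_d(n)$ for this common value, we get
\begin{align}
\sideset{}{'}\sum_{\deg A=n}Q_\mathcal{S}^{(2)}(A)=\sum_{d<n/2}\pi_\mathcal{S}(d)N_d(n),\qquad \sideset{}{'}\sum_{\deg A=n}Q_\mathcal{P}^{(2)}(A)=\sum_{d<n/2}\pi_\mathcal{P}(d)N_d(n).
\end{align}
The goal then reduces to a weighted-average argument: $\pi_\mathcal{S}(d)/\pi_\mathcal{P}(d)\to\delta(\mathcal{S})$, and by Lemma \ref{QP} the total weight $\sum_d\pi_\mathcal{P}(d)N_d(n)$ is $q^n(1+o(1))$.

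Choose $m=m(n)\to\infty$ with $m=o(n)$ and split at $d=m$.

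For $d\le m$, we use $\pi_\mathcal{S}(d)\le\pi_\mathcal{P}(d)$. The contribution is then at most the quantity $\mathcal{H}$ from the proof of Lemma \ref{QP}, which is $o(q^n)$ by \eqref{head}. The same bound holds for the corresponding $\mathcal{P}$-part.

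For $m<d<n/2$, set $\eta_m:=\sup_{d>m}|\pi_\mathcal{S}(d)/\pi_\mathcal{P}(d)-\delta(\mathcal{S})|$, which tends to $0$ since $m\to\infty$. Then
\begin{align}
\Big|\sum_{m<d<n/2}\pi_\mathcal{S}(d)N_d(n)-\delta(\mathcal{S})\sum_{m<d<n/2}\pi_\mathcal{P}(d)N_d(n)\Big|\le\eta_m\,\mathcal{T},
\end{align}
where $\mathcal{T}=q^n+o(q^n)$ by \eqref{tail}. Hence the $\mathcal{S}$-part over $m<d<n/2$ equals $\delta(\mathcal{S})\mathcal{T}+o(q^n)=\delta(\mathcal{S})q^n+o(q^n)$.

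Combining the two ranges and dividing by $q^n$ gives the claim. Lemma \ref{lem:vs}(a) could also be used to control the supremum of the errors uniformly, but the direct $\sup$ over $d>m$ suffices here. This is because the small-degree range, where the ratio $\pi_\mathcal{S}(d)/\pi_\mathcal{P}(d)$ is uncontrolled, is handled entirely by the trivial bound through $\mathcal{H}$.

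The main point to check carefully is the symmetry claim that $N(P,n)$ depends only on $\deg P$. It rests on the fact that the map sending $A=\prod P_i^{e_i}$ to $\prod\sigma(P_i)^{e_i}$, with $\sigma$ a degree-preserving permutation of primes swapping $P$ and $P'$, is a bijection preserving degree, divisibility by the swapped prime, and $\Delta_2$. Once this is in place, the rest is a standard Abelian averaging.
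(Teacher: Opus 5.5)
Your proof is correct, and it takes a genuinely different and shorter route than the paper's. Both proofs use the same head/tail split with $\mathcal{H}'\le\mathcal{H}=o(q^n)$; the difference is in how the tail $\mathcal{T}'$ is compared with $\delta(\mathcal{S})\mathcal{T}$.

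The paper decomposes each counted $A$ as $Q_1\cdots Q_rPB$, where the $Q_i$ are the primes of degree $\Delta_1(A)$. It writes $\mathcal{T}'-\delta(\mathcal{S})\mathcal{T}$ as a sum of terms $N_r(k)\,(\pi_\mathcal{S}(d)-\delta(\mathcal{S})\pi_\mathcal{P}(d))\cdot\#\{B\}$. It then bounds $\#\{B\}$ trivially by $q^{n-rk-d}$, discarding the smoothness condition on $B$, and bounds the middle factor by the absolute error $e_\mathcal{S}(d)$. This gives an error of $O(q^n\, n\, v_\mathcal{S}(m))$, which is not automatically $o(q^n)$ when $m=o(n)$. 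That is why the paper needs Lemma \ref{lem:vs}(b) to construct a specific $m(n)\gg\sqrt n$ with $n v_\mathcal{S}(m(n))\to0$.

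Your transposition argument is valid. The multiplicative extension of a degree-preserving permutation $\sigma$ of primes is a bijection on monics that preserves degree and $\mathcal{N}(\cdot)$, and it sends $P\mid A$ to $\sigma(P)\mid\sigma(A)$. So the weight $N(P,n)$ depends only on $\deg P$, and the $\mathcal{S}$-sum and $\mathcal{P}$-sum are exact weighted sums against the same weights $N_d(n)$. On $m<d<n/2$ the comparison then costs only the relative error $\eta_m$ times the total mass $\mathcal{T}$, with no loss. Hence any $m\to\infty$ with $m=o(n)$ works and Lemma \ref{lem:vs} is unnecessary. The only analytic input left is tightness of the weights, namely $\mathcal{H}=o(q^n)$ and $\mathcal{T}=q^n+o(q^n)$ from the proof of Lemma \ref{QP}.

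What the paper's route buys is portability. It follows the Duan--Wang--Yi / Kural--McDonald--Sah template, which does not assume primes of equal size are interchangeable, so it parallels the number-field arguments. Your argument instead exploits a symmetry specific to degree-based statistics over function fields; it would carry over verbatim to effective divisors of a global function field.
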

	
	\begin{proof}
		
		Similar to \eqref{imp1}, we can write
		\begin{equation}
			\sideset{}{'}\sum_{\deg A=n}Q_\mathcal{S}^{(2)}(A)=\sideset{}{'}\sum_{\substack{P\in \mathcal{S}\\ \deg P <\frac{n}{2}}}\hspace{2mm} \sideset{}{'}\sum_{\substack{P \mid A, \hspace{1mm}\deg A=n\\ \Delta_2(A)=\deg P}}1.
		\end{equation}
		Let $m=m(n)$ to be chosen later such that $m \rightarrow \infty$ as $n \rightarrow \infty$ and $m=o(n)$. We then split the double sum into $\mathcal{H}'$ and $\mathcal{T}'$ where
		\begin{align}
			\mathcal{H}':=\sideset{}{'}\sum_{\substack{P\in \mathcal{S}\\ \deg P \leq m}}\hspace{2mm} \sideset{}{'}\sum_{\substack{P \mid A, \hspace{1mm}\deg A=n\\ \Delta_2(A)=\deg P}}1,
		\end{align}
		and
		\begin{align}
			\mathcal{T}':=\sideset{}{'}\sum_{\substack{P\in \mathcal{S}\\ m< \deg P < \frac{n}{2}}}\hspace{2mm} \sideset{}{'}\sum_{\substack{P \mid A, \hspace{1mm}\deg A=n\\ \Delta_2(A)=\deg P}}1.
		\end{align}
		Since $\mathcal{S}\subset\mathcal{P}$, it follows by definition that $\mathcal{H}'\leq \mathcal{H}$. So, by \eqref{head}, we have
		\begin{align}\label{head'}
			\mathcal{H}'= o(q^n).
		\end{align}
		It remains to estimate $\mathcal{T}'$. Note that 
		\begin{align}
			\mathcal{T}' = \# \{(P, A): P\in \mathcal{S}, m < \deg P < n/2, P \mid A, \deg A=n, \Delta_2(A)=\deg P\}.
		\end{align}
		Fix an arbitrary total ordering $\preceq$ on the primes in $\mathbb{F}_q[T]$. For every pair $(A,P)$ occurring in $\mathcal T'$, let $Q_1,\cdots,Q_r$ be all the prime factors of $A$ of degree $\Delta_1(A)$, listed with multiplicity and in nondecreasing order
		$$
		Q_1\preceq Q_2\preceq\cdots\preceq Q_r.
		$$
		Then
		$
		A=Q_1Q_2\cdots Q_rPB
		$,
		where $\Delta_1(B)\le \Delta_2(A)=\deg P<\Delta_1(A)$. Conversely, every admissible tuple
		$(Q_1,\ldots,Q_r,P,B)$ satisfying the above conditions determines a
		unique pair $(A,P)$, and every pair $(A,P)$ counted by $\mathcal{T}'$
		arises uniquely in this way. For ease of notation, let 
		\begin{align}\label{def:tuple}
			N_r(k)=\#\{(Q_1,\cdots, Q_r): Q_i \text{ primes, not necessarily distinct}, Q_1\preceq Q_2\preceq\cdots\preceq Q_r, \deg Q_i = k\}.
		\end{align} 
		Thus, we can write
		\begin{equation}
			\begin{split}
				\mathcal{T}' 
				& = \sum_{r=1}^n
				\sum_{1 \leq k \leq \lfloor n/r \rfloor}
				N_r(k)\sideset{}{'}\sum_{\substack{P\in \mathcal{S}\\ m < \deg P<\frac{n}{2}\\ \deg P <k}}\sideset{}{'}\sum_{\substack{\deg B = n-rk-\deg P \\ \Delta_1(B)\leq\deg P}}1.
			\end{split}
		\end{equation}
		For easier estimation, we write $\mathcal{T}' = \mathcal{T}'_1 + \mathcal{T}'_2$, where
		\begin{align}
			\mathcal{T}'_1 := \sum_{r=1}^n
			\sum_{1\leq k \leq \min\left\{\lfloor \frac{n}{2} \rfloor,\lfloor \frac{n}{r} \rfloor\right\}}
			N_r(k) \sideset{}{'}\sum_{\substack{P\in \mathcal{S}\\ m < \deg P<k}}\sideset{}{'}\sum_{\substack{\deg B = n-rk-\deg P \\ \Delta_1(B)\leq\deg P}}1
		\end{align}
		and
		\begin{align}
			\mathcal{T}'_2 := \sum_{r=1}^n
			\sum_{\lfloor n/2 \rfloor < k \leq \lfloor n/r \rfloor}
			N_r(k)\sideset{}{'}\sum_{\substack{P\in \mathcal{S}\\ m < \deg P<\frac{n}{2}}}\sideset{}{'}\sum_{\substack{\deg B = n-rk-\deg P \\ \Delta_1(B)\leq\deg P}}1.
		\end{align}
		Recall from \eqref{tail def} that 
		\begin{equation}
			\mathcal{T} =\sideset{}{'}\sum_{\substack{\text{prime }P\\ m< \deg P < \frac{n}{2}}}\hspace{2mm} \sideset{}{'}\sum_{\substack{P \mid A, \hspace{1mm}\deg A=n\\ \Delta_2(A)=\deg P}}1.
		\end{equation}
		Similar to $\mathcal{T}'$, we write $\mathcal{T}=\mathcal{T}_1+\mathcal{T}_2$, where
		\begin{align}
			\mathcal{T}_1 := \sum_{r=1}^n
			\sum_{1\leq k \leq \min\left\{\lfloor \frac{n}{2} \rfloor,\lfloor \frac{n}{r} \rfloor\right\}}
			N_r(k)\sideset{}{'}\sum_{\substack{\text{prime }P\\ m < \deg P<k}}\sideset{}{'}\sum_{\substack{\deg B = n-rk-\deg P \\ \Delta_1(B)\leq\deg P}}1
		\end{align}
		and
		\begin{align}
			\mathcal{T}_2 :=\sum_{r=1}^n
			\sum_{\lfloor n/2 \rfloor < k \leq \lfloor n/r \rfloor}
			N_r(k)\sideset{}{'}\sum_{\substack{\text{prime }P\\ m < \deg P<\frac{n}{2}}}\sideset{}{'}\sum_{\substack{\deg B = n-rk-\deg P \\ \Delta_1(B)\leq\deg P}}1.
		\end{align}
		To estimate $\mathcal{T}'$, we first estimate the difference
		\begin{equation}
			\mathcal{T}' - \delta(\mathcal{S})\mathcal{T} = \left(\mathcal{T}'_1 - \delta(\mathcal{S})\mathcal{T}_1\right)+\left(\mathcal{T}'_2 - \delta(\mathcal{S})\mathcal{T}_2\right).
		\end{equation}
		We write 
		\begin{equation}
			\begin{split}
				\mathcal{T}'_1 - \delta(\mathcal{S})\mathcal{T}_1 &= \sum_{r=1}^n
				\sum_{1\leq k \leq \min\left\{\lfloor \frac{n}{2} \rfloor,\lfloor \frac{n}{r} \rfloor\right\}}
				N_r(k)\left(\sideset{}{'}\sum_{\substack{P\in \mathcal{S}\\ m < \deg P<k}}\sideset{}{'}\sum_{\substack{\deg B = n-rk-\deg P \\ \Delta_1(B)\leq\deg P}}1-\delta(\mathcal{S})\sideset{}{'}\sum_{\substack{\text{prime }P\\ m < \deg P<k}}\sideset{}{'}\sum_{\substack{\deg B = n-rk-\deg P \\ \Delta_1(B)\leq\deg P}}1\right)\\
				&= \sum_{r=1}^n
				\sum_{1\leq k \leq \min\left\{\lfloor \frac{n}{2} \rfloor,\lfloor \frac{n}{r} \rfloor\right\}}
				N_r(k)
				\sum_{m < d<k} \hspace{2mm} \sideset{}{'}\sum_{\substack{\deg B = n-rk-d \\ \Delta_1(B)\leq d}}(\pi_\mathcal{S}(d)-\delta(\mathcal{S})\pi_\mathcal{P}(d)),
			\end{split}
		\end{equation}
		where $\pi_\mathcal{S}(d)$ denotes the cardinality of the set $\{P\in\mathcal{S}:P \,\, prime, \deg P = d\}$. We further define the following two suprema, as done in \cite{duan-wang-yi}:
		\begin{align}
			e_\mathcal{S}(d) :=\sup_{e\leq d}|\pi_\mathcal{S}(e)-\delta(\mathcal{S})\pi_\mathcal{P}(e)|, \quad
			v_\mathcal{S}(d) :=\sup_{e\geq d}\frac{e_\mathcal{S}(e)}{q^e}.
		\end{align}
		We note that $v_\mathcal{S}(d)$ is a nonincreasing function. As a result, we have 
		\begin{align}
			|\mathcal{T}'_1 - \delta(\mathcal{S})\mathcal{T}_1| &\ll \sum_{r=1}^n
			\sum_{1\leq k \leq \min\left\{\lfloor \frac{n}{2} \rfloor,\lfloor \frac{n}{r} \rfloor\right\}}
			N_r(k)
			\sum_{m<d<k} e_\mathcal{S}(d) q^{n-rk-d}\\
			&= q^n \sum_{r=1}^n
			\sum_{1\leq k \leq \min\left\{\lfloor \frac{n}{2} \rfloor,\lfloor \frac{n}{r} \rfloor\right\}}
			N_r(k)q^{-rk}
			\sum_{m<d<k} e_\mathcal{S}(d) q^{-d}\\
			&\ll q^n\sum_{r=1}^n
			\sum_{1\leq k \leq \min\left\{\lfloor \frac{n}{2} \rfloor,\lfloor \frac{n}{r} \rfloor\right\}}
			N_r(k)q^{-rk}kv_\mathcal{S}(m). \label{T1-wait}
		\end{align}
		We have 
		\begin{align}
			\sum_{r=1}^n
			\sum_{1\leq k\leq
				\min\left\{
				\left\lfloor\frac{n}{2}\right\rfloor,
				\left\lfloor\frac{n}{r}\right\rfloor
				\right\}}
			N_r(k)kq^{-rk}
			&=
			\sum_{k\leq n/2}
			N_1(k)kq^{-k}
			+
			\sum_{r=2}^n
			\sum_{k\leq n/r}
			N_r(k)kq^{-rk}
			\nonumber\\
			&=
			\sum_{k\leq n/2}
			\pi_{\mathcal{P}}(k)kq^{-k}
			+
			\sum_{r=2}^n
			\sum_{k\leq n/r}
			N_r(k)kq^{-rk}.
			\label{eq:Nr-bound-first}
		\end{align}
		Since
		\begin{align}
			N_r(k)
			\leq
			\pi_{\mathcal{P}}(k)^r
			\leq
			\frac{q^{rk}}{k^r},
		\end{align}
		we obtain
		\begin{align}
			\sum_{r=1}^n
			\sum_{1\leq k\leq
				\min\left\{
				\left\lfloor\frac{n}{2}\right\rfloor,
				\left\lfloor\frac{n}{r}\right\rfloor
				\right\}}
			N_r(k)kq^{-rk}
			&\leq
			\sum_{k\leq n/2}1
			+
			\sum_{k\leq n/2}
			\sum_{r=2}^{\lfloor n/k\rfloor}
			k^{1-r}
			\nonumber\\
			&\leq
			\frac{n}{2}
			+
			\sum_{r=2}^{n}1
			+
			\sum_{k=2}^{\lfloor n/2\rfloor}
			\sum_{r=2}^{\infty}k^{1-r}
			\nonumber\\
			&=
			\frac{n}{2}
			+
			n-1
			+
			\sum_{k=2}^{\lfloor n/2\rfloor}
			\frac{1}{k-1}
			\nonumber\\
			&\ll n+\log n
			\ll n.
			\label{eq:Nr-bound}
		\end{align}
		Plugging this in \eqref{T1-wait}, we obtain
		\begin{align}
			|\mathcal{T}'_1 - \delta(\mathcal{S})\mathcal{T}_1| \ll
			q^n nv_\mathcal{S}(m).\label{T1}
		\end{align} 
		Now in order to estimate $|\mathcal{T}'_2 - \delta(\mathcal{S})\mathcal{T}_2|$, we first observe that in this case, the sum over $r$ survives only when $r=1$. So, as done for $|\mathcal{T}'_1 - \delta(\mathcal{S})\mathcal{T}_1|$, we have that
		\begin{align}
			|\mathcal{T}'_2 - \delta(\mathcal{S})\mathcal{T}_2| &\ll \sideset{}{'}\sum_{\substack{\text{prime }Q\\ \frac{n}{2}\leq \deg Q = k \leq n}}\hspace{1mm}
			\sum_{m<d<k} e_\mathcal{S}(d) q^{n-k-d}\\
			&= q^n \sideset{}{'}\sum_{\substack{\text{prime }Q\\ \frac{n}{2}\leq \deg Q = k \leq n}}q^{-k}
			\sum_{m<d<k} e_\mathcal{S}(d) q^{-d}\\
			&\ll q^n \sideset{}{'}\sum_{\substack{\text{prime }Q\\ \frac{n}{2}\leq \deg Q = k \leq n}} q^{-k}kv_\mathcal{S}(m)\\
			&\ll q^n nv_\mathcal{S}(m).\label{T2}
		\end{align}
		Adding \eqref{T1} and \eqref{T2}, we get 
		\begin{equation}
			\mathcal{T}'
			=\delta(\mathcal{S})\mathcal{T}
			+
			O(q^n nv_\mathcal{S}(m))
			.
		\end{equation}
		Since  $\mathcal{H}'=o(q^n)$ by \eqref{head'}, we obtain
		\begin{equation}
			\sideset{}{'}\sum_{\deg A=n}Q_\mathcal{S}^{(2)}(A) 
			= 
			\mathcal{H}'+\mathcal{T}' 
			= 
			\delta(\mathcal{S})\mathcal{T}+ 
			O(q^n nv_\mathcal{S}(m))
			+o(q^n).
		\end{equation}
		By \eqref{tail}, we have $\mathcal{T}=q^n + o(q^n)$. Thus, we have
		\begin{equation}
			\sideset{}{'}\sum_{\deg A=n}Q_\mathcal{S}^{(2)}(A)
			= \delta(\mathcal{S})q^n
			+O(q^n nv_\mathcal{S}(m)) + o(q^n).
		\end{equation}
		So, it only remains to show that $q^n nv_\mathcal{S}(m) = o(q^n)$ or equivalently, $\lim_{n\to\infty} nv_\mathcal{S}(m)=0$. 
		Here, we invoke Lemma \ref{lem:vs} and its following remark. Let $f(n)=\pi_\mathcal{S}(n)$ and $g(n)=\pi_\mathcal{P}(n)$ as defined above. Then, by Lemma \ref{lem:vs}(a), 
		\begin{equation}
			\lim_{n\to\infty}\frac{e_\mathcal{S}(n)}{\pi_\mathcal{P}(n)}=0.
		\end{equation}
		By Lemma \ref{lem:PNT in ff}, this implies $\frac{n e_{\mathcal S}(n)}{q^n} \to 0$ as $n\to \infty$. As a result,
		\begin{align}
			n v_{\mathcal S}(n)
			\le
			\sup_{j\ge n}
			\frac{j e_{\mathcal S}(j)}{q^j}
			\to 0,
		\end{align}
		as $n \to \infty$. If $v_{\mathcal S}$ is eventually zero, choose any sequence $m(n)\to\infty$ with $m(n)=o(n)$. Otherwise, $v_{\mathcal S}(n)>0$ for all $n$, and  then $v_\mathcal{S}(n)$ satisfies the hypothesis for Lemma \ref{lem:vs}(b). So, by Lemma \ref{lem:vs}(b), choosing $h(n)=\sqrt{n},$ there exists $m(n)$ such that $m(n)=o(n)$, $\lim_{n\to\infty}m(n)=\infty$ and $\lim_{n\to\infty}nv_\mathcal{S}(m(n)) = 0$. Hence, we have that as $n\to \infty$,
		\begin{equation}
			\frac{1}{q^n}\sideset{}{'}\sum_{\deg A=n}Q_\mathcal{S}^{(2)}(A) \to \delta(\mathcal{S}).
		\end{equation}
	\end{proof}

	\section
	{Proof of Corollary \ref{cor:to main thm in ff}}
	
	Let $P_k$ be a prime such that $\deg P_k = k$ and define $\mathcal{S}_k=\{P_k\}$. Then
	\begin{equation}
		\mathcal{D}(\mathcal{S}_k)=\{P_k^{\alpha}B : B=1 \text{ or } \delta_1(B)\geq k+1, \alpha \geq 1\}.
	\end{equation}
	So, the sum 
	\begin{align}
		\lim_{x\to\infty}\sideset{}{'}\sum_{\substack{1\leq\deg A\leq x\\A\in\mathcal{D}(\mathcal{S}_k)}}\frac{\mu(A)\Omega(A)}{q^{\deg A}} 
		&=
		-\frac{1}{q^k} \left( 1+ \lim_{x\to\infty}
		\sideset{}{'}
		\sum_{\substack{1  \leq \deg B \leq x-k  \\  \delta_1(B)\geq k+1}}
		\frac{\mu(B)\Big(\Omega(B)+1\Big)}{q^{\deg B}} \right)
		\\
		&=
		-\frac{1}{q^k} \left(1+
		\lim_{x\to\infty}
		\left(
		\sideset{}{'}
		\sum_{\substack{1  \leq \deg B \leq x-k  \\  \delta_1(B)\geq k+1}}
		\frac{\mu(B)\Omega(B)}{q^{\deg B}}
		+
		\sideset{}{'}
		\sum_{\substack{1  \leq \deg B \leq x-k  \\  \delta_1(B)\geq k+1}}
		\frac{\mu(B)}{q^{\deg B}}\right)\right). \label{mids}
	\end{align}
	Mertens' estimate \eqref{merten} implies that
	\begin{align}
		\prod_{\text{prime }P}
		\left(1-\frac{1}{q^{\deg P}}\right)
		=0.
	\end{align}
	For a fixed positive integer $k$, define
	\begin{align}
		G_k(u)
		:=
		\sideset{}{'}\sum_{\substack{A=1\text{ or }\delta_1(A)\geq k+1}}
		\mu(A)u^{\deg A}.
	\end{align}
	Then, as a formal power series,
	\begin{align}
		G_k(u)
		=
		\prod_{\substack{\text{prime }P\\ \deg P\geq k+1}}
		\left(1-u^{\deg P}\right)
		=
		\frac{\displaystyle\prod_{\text{prime }P}
			\left(1-u^{\deg P}\right)}
		{\displaystyle\prod_{\substack{\text{prime }P\\ \deg P\leq k}}
			\left(1-u^{\deg P}\right)}
		=
		\frac{1-qu}
		{\displaystyle\prod_{\substack{\text{prime }P\\ \deg P\leq k}}
			\left(1-u^{\deg P}\right)}.
	\end{align}
	Since the denominator has no zero for $|u|<1$, the Taylor series of $G_k(u)$ about $u=0$ converges at $u=1/q$. Hence, we obtain
	\begin{align}
		\lim_{x\to\infty}
		\sum_{\substack{\deg A\leq x\\
				A=1\text{ or }\delta_1(A)\geq k+1}}
		\frac{\mu(A)}{q^{\deg A}}
		=
		G_k\left(\frac{1}{q}\right)
		=0.
	\end{align}
	Since the term $A=1$ contributes $1$, it follows that
	\begin{align}\label{mu tail}
		\lim_{x\to\infty}
		\sum_{\substack{1\leq \deg A\leq x\\
				\delta_1(A)\geq k+1}}
		\frac{\mu(A)}{q^{\deg A}}
		=-1.
	\end{align}
	Using \eqref{mu tail} and Theorem \ref{thm:mainthm AJ}, we obtain from \eqref{mids} that  for $k\geq 1$, 
	\begin{equation}
		\lim_{x\to\infty}\sideset{}{'}\sum_{\substack{1  \leq \deg B \leq x-k  \\  \delta_1(B)\geq k+1}}\frac{\mu(B)\Omega(B)}{q^{\deg B}}=0.
	\end{equation}
	This is equivalent to saying that
	\begin{equation}
		\lim_{x\to\infty}\sideset{}{'}\sum_{\substack{1  \leq \deg A \leq x  \\  \delta_1(A)\geq k+1}}\frac{\mu(A)\Omega(A)}{q^{\deg A}}=0.
	\end{equation}
	We already have from Theorem \ref{thm:landau_two_FF} that,
	\begin{equation}
		\lim_{x\to\infty}
		\sideset{}{'}\sum_{\substack{1\leq\deg A\leq x \\ \delta_1(A)\geq 1}}
		\frac{\mu(A)\Omega(A)}{q^{\deg A}}
		=
		\lim_{x\to\infty}
		\sideset{}{'}\sum_{1\leq\deg A\leq x}
		\frac{\mu(A)\Omega(A)}{q^{\deg A}}
		=0.
	\end{equation}
	As a result, for every $n \in \mathbb{N}$, 
	\begin{equation}
		\lim_{x\to\infty}
		\sideset{}{'}\sum_{\substack{1\leq\deg A\leq x \\ \delta_1(A)=n}}
		\frac{\mu(A)\Omega(A)}{q^{\deg A}}
		=
		\lim_{x\to\infty}
		\sideset{}{'}\sum_{\substack{1\leq\deg A\leq x\\ \delta_1(A)\geq n}}
		\frac{\mu(A)\Omega(A)}{q^{\deg A}}
		-
		\lim_{x\to\infty}
		\sideset{}{'}\sum_{\substack{1\leq\deg A\leq x \\ \delta_1(A)\geq n+1}}
		\frac{\mu(A)\Omega(A)}{q^{\deg A}}=0.
	\end{equation}
	Hence, we have the result. \qed
	
	\section{Proof of Theorem \ref{thm:global_duality}}
	
	The proof follows the same argument as done in the proof of Theorem
	\ref{thm:duality in ff}. The result is immediate when $A=0$. Moreover,
	if $k>\Omega(A)$, then both sides of \eqref{higher duality in global ff}
	vanish since $f(0)=0$. Hence, we may assume that
	$1\leq k\leq\Omega(A)$. For $k=1$, the result follows from the global
	function field analogue established by Duan, Wang, and Yi
	\cite[Lemma 3.1]{duan-wang-yi}. Assuming that
	\eqref{higher duality in global ff} holds for some integer
	$1\leq r\leq\Omega(A)-1$, Pascal's identity yields
	\begin{align}		\label{eq:global_sum_Pascal}
		&\sum_{\substack{B\in\mathfrak{D}(K,\mathcal{S}) \\ B \le A}} \mu(B)\binom{\Omega(B)-1}{r}f(\delta_1(B)) + (-1)^r Q_\mathcal{S}^{(r)}(A)f(\Delta_r(A))\\
		&= \sum_{\substack{B\in\mathfrak{D}(K,\mathcal{S}) \\ B \le A}} \mu(B) \binom{\Omega(B)}{r} f(\delta_1(B)). 
	\end{align}
	
	We now let $\Omega(A)=m$ and
	$\mathcal{N}(A)=\{d_1,d_2,\cdots,d_m\}$, where
	$d_1<d_2<\cdots<d_m$. We decompose $A$ as
	$A=A_1+\cdots+A_m$, where each effective divisor $A_i$ is the sum,
	with multiplicities, of all the prime divisors of $A$ having degree
	$d_i$. For any divisor $B\in\mathfrak{D}(K,\mathcal{S})$, let $t(B)$
	be the index such that the smallest prime factor of $B$ has degree
	$d_{t(B)}$. Since the terms with $\mu(B)=0$ do not contribute, we may
	restrict to squarefree divisors $B$. Then any such $B\leq A$ can be
	written as
	$
	B=P_t+B'$,
	where $t=t(B)$, $P_t\in\mathcal{S}$ is a prime with
	$\deg P_t=d_t$, and
	$B'\leq A_{t+1}+\cdots+A_m$. Under this setup, we follow the same
	argument after \eqref{former} to get

	\begin{align}\label{eq:penultimate}
		\sum_{\substack{B\in\mathfrak{D}(K,\mathcal{S}) \\ B \le A}} \mu(B) \binom{\Omega(B)}{r} f(\delta_1(B))= - \sum_{t=1}^m f(d_t) Q_\mathcal{S}^{(m-t+1)}(A) \sum_{B' \le A_{t+1} + \cdots + A_m} \mu(B') \binom{\Omega(B')+1}{r}.
	\end{align}
	Then, a reformulation of Corollary \ref{lem:sum muomega gen in ff} for global function fields shows that the inner sum
	\begin{align}
		\sum_{B' \le A_{t+1} + \cdots + A_m} \mu(B') \binom{\Omega(B')+1}{r} = 
		\begin{cases}
			(-1)^{r-1} & \text{ if }r=m-t+1,\\
			(-1)^r & \text{if }r=m-t,\\
			0 & \text{otherwise.}
		\end{cases}
	\end{align}
	Substituting this in \eqref{eq:penultimate} and then combining \eqref{eq:penultimate} and \eqref{eq:global_sum_Pascal}, we obtain \eqref{higher duality in global ff} after simplification. 
	\qed

	\section*{Acknowledgment}
	The authors  express their sincere gratitude to Prof. K. Alladi for his inspiring talk on "Second Order Duality Between Prime Factors and
	Primes in Arithmetic Progressions" and for the discussion session during his visit to IIT Gandhinagar, which motivated the initiation of this project. The authors are also deeply grateful to Prof. Akshaa Vatwani for suggesting the problem, engaging in several  discussions, and carefully reading the article, thereby improving  the quality of the paper. They further thank Sroyon Sengupta for insightful discussions which helped in deriving a bound for $\Psi_2(n,m)$. The second author would like to thank IMA Bhubaneswar where a part of the research has been conducted.
	\bibliographystyle{abbrv}
	\bibliography{alladi}

\end{document}